\titleformat{\section}{\vskip10pt\large\bfseries}{\thesection.}{0.5em}{\centering\vspace{5pt}}
\titleformat{\subsection}{\vskip10pt\normalsize\bfseries}{\thesubsection.}{0.5em}{}
\newtheorem{theorem}{Theorem}[section]
\newtheorem{lemma}[theorem]{Lemma}
\newtheorem{proposition}[theorem]{Proposition}
\newtheorem{remark}{Remark}[section]
\theoremstyle{definition}
\def\u{\tilde u(t_n+s)}
\def\v{\tilde v(t_n+s)}
\def\R{\mathbb{R}}
\numberwithin{equation}{section}
\begin{document}
\title[]{Numerical approximation of discontinuous solutions\\ of the semilinear wave equation}

\author[]{Jiachuan Cao\,\,}

\author[]{Buyang Li\,\,}
\address{\hspace*{-12pt}Jiachuan Cao, Buyang Li, Yanping Lin, Fangyan Yao: 
Department of Applied Mathematics, The Hong Kong Polytechnic University,
Hung Hom, Hong Kong. 
{\rm Email address: jiachuan.cao@polyu.edu.hk, buyang.li@polyu.edu.hk, yanping.lin@polyu.edu.hk, fangyan.yao@polyu.edu.hk}}
%

\author[]{Yanping Lin\,\,}

\author[]{Fangyan Yao\,\,}



\keywords{Semilinear wave equation, discontinuous solution, low regularity, numerical approximation, high/low frequency decomposition, error estimates}

\maketitle
\vspace{-20pt}

\begin{abstract}
A high-frequency recovered fully discrete low-regularity integrator is constructed to approximate rough and possibly discontinuous solutions of the semilinear wave equation. The proposed method, with high-frequency recovery techniques, can capture the discontinuities of the solutions correctly without spurious oscillations and approximate rough and discontinuous solutions with a higher convergence rate than pre-existing methods. Rigorous analysis is presented for the convergence rates of the proposed method in approximating solutions such that $(u,\partial_{t}u)\in C([0,T];H^{\gamma}\times H^{\gamma-1})$ for $\gamma\in(0,1]$. For discontinuous solutions of bounded variation in one dimension (which allow jump discontinuities), the proposed method is proved to have almost first-order convergence under the step size condition $\tau \sim N^{-1}$, where $\tau$ and $N$ denote the time step size and the number of Fourier terms in the space discretization, respectively. Numerical examples are presented in both one and two dimensions to illustrate the advantages of the proposed method in improving the accuracy in approximating rough and discontinuous solutions of the semilinear wave equation. The numerical results are consistent with the theoretical results and show the efficiency of the proposed method.  \\

\end{abstract}
%
\section{Introduction}\label{model} 
This article concerns the construction and analysis of numerical methods for approximating rough and possibly discontinuous solutions of the semilinear wave equation  
\begin{equation}\label{KG_eq}
\begin{cases}
\partial_{t t} u-\Delta u=g(u) & \text { in } (0, T] \times \Omega 
\\[2mm] \left.u\right|_{t=0}=u^0 \text { and }\left.\partial_t u\right|_{t=0}=v^0 & \text { in } \Omega 
\end{cases}
\end{equation}
in a domain $\Omega = [0,1]^d$ with the periodic boundary condition (i.e., $\Omega$ is regarded as a $d$-dimensional torus), where $g: \mathbb{R} \rightarrow \mathbb{R}$ is a given nonlinear function. For example, equation \eqref{KG_eq} is often referred to as the sine--Gordon equation when $g(u)=\sin(u)$, and often referred to as the nonlinear Klein--Gordon equation when $g(u)=-mu-\lambda u^3$; see \cite{CKW2014, Davydov2013,GI92}. Since the waves described by the semilinear wave equation propagate with finite speed, the problem in the whole space with compactly supported initial values can also be reduced to a bounded rectangular domain (with periodic boundary condition) which contains the support of the solution on the whole time interval $[0,T]$. 

As the relativistic version of the Schr\"{o}dinger equation, the semilinear wave equation has been wildly used in many physical areas such as quantum field theory, nonlinear optics, dislocated crystals, etc. During the last few decades, the numerical approaches for solving the semilinear wave equation have been extensively investigated, such as trigonometric/exponential integrators that are based on the variation-of-constants formula (for example, see
\cite{BDH2021,GH06,HL1999,G15,WW2019}), splitting methods (for example, see \cite{BFS2022,BY2007,BDH2021,GQ2014,HO2016,QG2016}), symplectic methods \cite{CHSS2020,CHL2008,HL00}, and finite difference methods (such as the Crank--Nicolson, Runge--Kutta and Newmark methods, see \cite{CLZ2021,HL2021,LLT2016,LS2020,LV2006,MK2011,QW2019,RX2008,WW2019-IMA}). These classical numerical methods have been shown convergent with optimal order to the sufficiently smooth solutions of the semilinear wave equation. However, due to the dispersion feature of \eqref{KG_eq}, roughness of the solution may be brought in by randomness or discontinuity of the initial data. This would cause significant challenges in constructing convergent numerical methods for approximating rough solutions, possibly discontinuous and with unbounded energy, of the semilinear wave equation. 

Many recent efforts were devoted to the construction and analysis of low-regularity integrators for nonlinear dispersive equations, such as the KdV equation \cite{Hofmanova-Schratz-2017,Li-Wu-Yao-2021,Wu-Zhao-IMA,Wu-Zhao-BIT} and the nonlinear Schr\"odinger equation \cite{BS,OS18,ORS20,RS21}. In these articles, several different approaches have been developed for constructing numerical methods that are convergent under low-regularity conditions, with higher-order convergence than the classical methods, including the resonance-based approach which uses variation-of-constants formulae and twisted variables \cite{BS,Hofmanova-Schratz-2017,OS18,OY2022,Li-Wu-Yao-2021,Wu-Yao-MCOM,Wu-Zhao-IMA,Wu-Zhao-BIT}, the semigroup based technique using the cancellation structures in the solution representations \cite{RS21,LMS}, and low-regularity integrators based on discrete Bourgain/Strichartz estimates \cite{ORS20}. Recently, based on new schemes to approximate the nonlinear frequency interaction and a new harmonic analysis technique by using the Littlewood-Paley dyadic decomposition, first-order low-regularity schemes for the cubic nonlinear Schr\"{o}dinger equation were introduced in \cite{LW2021} and \cite{BLW2022} to allow almost first-order convergence in the $L^{2}$ norm for $H^{1}$ initial data with periodic and Neumann boundary conditions respectively. Moreover, based on a temporal averaging technique and more careful high order resonance analysis, a new second-order scheme was proposed in \cite{CLL2023}, which can have second-order convergence in the $L^{2}$ norm with initial data strictly below $H^{2}$. These newly developed approaches have significantly improved the convergence rates of numerical solutions to these nonlinear dispersive equations under low regularity conditions. 

Low regularity integrators for the semilinear wave equation was addressed by Rousset \& Schratz in \cite{RS21} by using transformation $w=u-i(-\Delta)^{-\frac12} \partial_{t} u$, which converts the semilinear wave equation into a first-order formulation, i.e., 
\begin{align}
\label{formula:rootrewriting}
i \partial_{t} w=-(-\Delta)^{\frac12} w+(-\Delta)^{-\frac12} g\Big(\frac{w+\bar{w}}{2}\Big) . 
\end{align}
They constructed a low-regularity integrator for the first-order equation in \eqref{formula:rootrewriting} with 
second-order convergence in the energy norm $H^{1}\times L^{2}$ under the regularity condition $(u^{0},v^{0})\in H^{\frac{7}{4}}\times H^{\frac{3}{4}}$ in three dimensions. 

A different low-regularity integrator for \eqref{KG_eq} was constructed in \cite{LSZ} directly based on the discovery of a new cancellation structure, also with second-order convergence in $H^{1}\times L^{2}$ under the regularity condition $(u^{0},v^{0})\in H^{1+\frac{d}{4}}\times H^{\frac{d}{4}}$ for spatial dimension $d=1,2,3$. For the nonlinear term $g(u)=mu+\lambda u^{3}$ with given constants $m\geq 0$ and $\lambda\in \mathbb{R}$, a symmetric low-regularity integrator was constructed in \cite{WZ2022} for the semilinear Klein--Gordon equation on a one-dimensional torus, with second-order convergence in $H^{\gamma}\times H^{\gamma-1}$ under the condition $(u^{0},v^{0})\in H^{\gamma}\times H^{\gamma-1}$ for $\gamma>\frac{1}{2}$. 

These low-regularity integrators all require the solution $(u,\partial_tu)$ to be in $H^{\gamma}\times H^{\gamma-1}$ with $\gamma>\frac{d}{2}$, thus requiring the solution $u$ to be continuous. However, the semilinear wave equation can be well-posed even for discontinuous solutions below the energy space. For example, the sine--Gordon equation is well-posed in $H^{\gamma}\times H^{\gamma-1}$ for all $\gamma\ge 0$ and the semilinear Klein--Gordon equation is well-posed in $H^{\gamma}\times H^{\gamma-1}$ for $\gamma>\frac{1}{3}$ in the one dimensional case \cite{Bourgain1999}, and for $\gamma\geq 1$ in the high dimensional cases \cite{GV1985,GV1989}. The construction and analysis of convergent numerical methods for approximating such rough and discontinuous solutions of the semilinear wave equation are still interesting and challenging. 
%

The aim of this paper is to construct an efficient fully discrete low-regularity integrator for approximating possibly discontinuous solutions of the semilinear wave equation in one- and two-dimensional cases. To improve the convergence rates of numerical solutions in approximating rough solutions, we design a numerical scheme which could approximate the low-frequency and high-frequency parts of the solution separately, by approximating the low-frequency part with a time-stepping scheme and the high-frequency part with a recovery technique. The high-frequency recovery, which has equivalent computational cost as the approximation to the low-frequency part, could significantly improve the accuracy of the numerical solutions and therefore could capture the discontinuities in the rough solution by significantly reducing spurious oscillations. The advantages of the proposed method are demonstrated numerically in Section \ref{Section:Numerical} and proved rigorously in a particular setting, for approximating discontinuous solutions of bounded variation (such as piecewise smooth solutions) in one dimension. 

By utilizing the cancellation structure in the variation-of-constants-formula for the semilinear wave equation, and the new techniques developed in this paper, we prove the following error bounds in the $L^2\times H^{-1}$ norm for approximating a solution $(u,\partial_{t}u)\in C([0,T];H^{\gamma}\times H^{\gamma-1})$: 
\begin{align*}
& O(N^{-\gamma+}+\tau N^{1-2\gamma+}) &&\mbox{for}\,\,\,\gamma\in \Big(0,\frac{1}{2} \Big], \\
& O\big(N^{-\gamma+}+\tau N^{1-2\gamma+} +\min(\tau,\tau^{2}N^{2(1-\gamma)+})\big)
&&\mbox{for}\,\,\,\gamma\in \Big(\frac{1}{2},1 \Big] ,
\end{align*}
where $N$ denotes the number of Fourier terms used in each dimension of the space discretization. Therefore, the error is $O(\tau^\gamma)$ under the step size condition $\tau\sim N^{-1}$, and the convergence rate could be further improved by choosing a different step size condition which depends on the regularity of the solution. 

More importantly, we prove that for discontinuous solutions with bounded variation in one dimension (e.g., piecewise smooth solutions with jump discontinuities) the proposed numerical scheme has better convergence rate (i.e., almost first-order convergence) in $L^2\times H^{-1}$ under the step-size condition $\tau \sim N^{-1}$. 

Extensive numerical experiments, including both one- and two-dimensional examples, are given to illustrate the effectiveness (higher-order accuracy and reduction of spurious oscillation) of the proposed method in approximating rough and discontinuous solutions of the semilinear wave equation. 

The rest of this article is organized as follows. In Section \ref{section:main-results}, we introduce some basic notations and present the main theoretical results of this article. In Section \ref{section:technical} we present some preliminary and technical results have will be used in the construction and analysis of the method. 
The construction of the numerical scheme is presented in Section \ref{sec:scheme}. 
The proof of the main theorem and the improved results (for discontinuous BV solutions) are presented in Section \ref{section:proof} and Section \ref{section:proof2}, respectively. Finally, we provide numerical examples in Section 7 to support the theoretical results proved in this article and to illustrate the effectiveness of the proposed method in approximating rough/discontinuous solutions of the semilinear wave equation. 

\section{Notations and main results}\label{section:main-results}
In this section we introduce the basic notations and the numerical scheme, and then present the main theoretical results of this article on the convergence of numerical approximations to rough solutions of the semilinear wave equation. 

\subsection{Notations and numerical scheme}
 
We rewrite the second-order semilinear wave equation in \eqref{KG_eq} into a first-order system of equations, i.e.,  
\begin{align}\label{eq-mv}
\left\{
    \begin{aligned}
    &\partial_t U-LU=F(U) \ \text{in} \,\, (0,T]\times \Omega,\\ 
    &U(0)=U^0 \ \text{in} \ \Omega,
    \end{aligned}\right.
\end{align}
with  
\begin{align*}
    U=\left(\begin{array}{c} u \\ \partial_t u
\end{array}\right),
\quad 
U^0=\left(\begin{array}{c} u^0 \\ v^0
\end{array}\right),
\quad 
F(U)=\left(\begin{array}{c} 0 \\ f(u)
\end{array}\right),\quad 
L=\left(\begin{array}{ll}
0 & 1 \\ \widetilde{\Delta} & 0
\end{array}\right),
\end{align*}
where we have used the following notations: 
$$f(u)=g(u)+mu\quad \text{and}\quad\widetilde{\Delta}=\Delta-m \,\,\,\mbox{with some fixed constant $m>0$}.$$ 
The fixed constant $m>0$ is introduced to make sure that the linear operator $L$ is reversible on the $d$-dimensional torus $\Omega=[0,1]^d$. 

We denote by $e^{t L}$ the solution operator of the linear wave equation (i.e., the map from $U^0$ to $U(t)$ in the case $F(U)\equiv 0$). By defining the $a$-norm of a function $W=(w_1,w_2)^T \in H^a(\Omega)\times H^{a-1}(\Omega)$, i.e., 
\begin{align*}
    \|W\|_a=(\|w_1\|_{H^a}^2+\|w_2\|_{H^{a-1}}^2)^\frac12,\quad a\in\R . 
\end{align*}
the following properties hold: 
\begin{align}\label{propL}
&\| e^{t L}W\|_{0} \lesssim \| W\|_0,\quad \| e^{t L} W\|_{1} \lesssim \| W\|_1,\\
& \| F(U)\|_1\lesssim \| f(u)\|_{L^2}\lesssim f(0)+\|f^{\prime}\|_{L^{\infty}}\|U\|_{0}.
\end{align}

It is known that any function in the Sobolev space $L^2(\Omega)$ can be expanded into a Fourier series. Accordingly, we introduce the finite-dimensional subspace 
$$
S_N=\bigg\{\sum_{n_1, \cdots, n_d=-N}^N c_{n_1, \cdots, n_d} \exp \left(2n_1 \pi x_1 i\right) \cdots \exp \left(2n_d \pi x_d i\right): c_{n_1, \cdots, n_d} \in \mathbb{C}\bigg\} ,
$$
and approximate functions in $H^s(\Omega)$ by using the finite-dimensional subspace $S_N$. We denote by $\Pi_N$ the $L^2$ projection operator onto $S_N$ defined by 
\begin{align*}
(w-\Pi_Nw, v) = 0, \quad \forall v\in S_N, \quad w\in H^s(\Omega) , 
\end{align*}
and denote $ \Pi_{>N}:= I - \Pi_{N}$ and $\Pi_{(N_1,N_2]}:=\Pi_{N_2}-\Pi_{N_1}$ for $N_2>N_1$. 
We denote by $I_N: H^s\to S_N$ the trigonometric interpolation  such that  
for any function $w\in H^s$, $s>\frac{d}2$,
$\left(I_N w\right)(x)=w(x)$ for $x \in D^d$, with
$$
D=\left\{\frac{n}{2 N}: n=0, \cdots, 2N-1\right\}. 
$$

We define the sequence of grid points $t_n=n\tau$, $n=0,1,\dots, M$, in the time interval $[0,T]$ with step size 
$\tau=T/M$, and denote by $U^n=(u^n,v^n)^T$ the numerical solution at time $t=t_n$. 
Then the high-frequency recovering low-regularity integrator for the equation \eqref{eq-mv} constructed in this article reads (the detailed construction is presented in Section \ref{sec:scheme}): 
\begin{subequations}\label{reformulated_scheme}
\begin{align}
U^{n+1}_{N}&=\Pi_N U^{n+1}_{N}+\Pi_{(N,N^{\alpha}]} U^{n+1}_{N},\label{HR-LRI_1}\\[2mm]
\Pi_N U^{n+1}_{N}&=e^{ \tau L} \Pi_N U^{n}_{N} + \tau e^{ \tau L} I_N F( \Pi_N U^{n}_{N}) \notag\\[2mm]
&\quad+ (2L)^{-1} \left[
\tau e^{ \tau L} - (2L)^{-1} (e^{ \tau L} - e^{- \tau L} )
H(\Pi_N U^{n}_{N})
\right],\label{HR-LRI_2}\\[2mm]
\Pi_{(N,N^{\alpha}]} U^{n+1}_{N}&=e^{ (n+1)\tau L}\Pi_{(N,N^{\alpha}]} U^{0}_{N},\label{HR_LRI_3}
\end{align}
\end{subequations}
with 
\begin{align}\label{defH}
H(U)=
 \begin{pmatrix}
	-I_N f(u)\\
	\Pi_N ( I_N f^\prime (u)\cdot v)
\end{pmatrix},
\end{align}
and initial value $U_{N}^{0}=\Pi_{N^{\alpha}}U(0)$ for $\alpha\geq 1$. This scheme introduces an additional high-frequency part to the low-regularity integrator in \cite{LSZ}, and uses a different definition of $H(U)$ to obtain the desired convergence rates for approximating discontinuous solutions. It is the combination of the filtered low-regularity integrator \eqref{HR-LRI_2} and high-frequency recovery process \eqref{HR_LRI_3}, ensures the accuracy of the proposed method for approximating rough solutions under lower-regularity conditions than \cite{LSZ}. 

\begin{remark}\label{Remark2.1}\upshape 
In \eqref{reformulated_scheme} we see that the high- and low-frequency parts of $U^{n+1}_{N}$ are computed separately, independent of each other, where the low-frequency part is computed by a time-stepping scheme and the high-frequency part is recovered directly from its initial value via \eqref{HR_LRI_3} (without time steppings). 
Since the nonlinear terms in \eqref{HR-LRI_2} can be computed by using the Fast Fourier Transform (FFT), the computational cost at every time level is $O(N^{d}\log(N))$. Therefore, the total cost for computing the low-frequency part at time $T$ is $O(N^{d}\log(N) T/\tau )$. In contrast, the high-frequency part of the numerical solution needs not be computed every time level. Instead, we only need to compute \eqref{HR_LRI_3} once to recover the high-frequency part of $U^{n+1}_{N}$ for any particular time level of interest. Therefore, the cost of computing the high-frequency part at time $T$ is $O(N^{\alpha d})$, which is comparable to the cost of computing the low-frequency part if we choose $N^{(\alpha-1)d}\sim T/\tau$. Under the step size condition $\tau\sim N^{-1}$, this suggests to choose $\alpha=1+\frac1d$ in the  computation. The advantages of this choice is analyzed rigorously in Theorem \ref{u11} in one dimension and illustrated numerically in Section \ref{Section:Numerical} in both one and two dimensions. 

\end{remark}

\begin{remark}\upshape 
The trigonometric interpolation operator $I_N$ is used in \eqref{HR-LRI_2} and \eqref{defH} to approximate nonlinear functions by Fourier series using FFT. This makes the algorithm more efficient than using the projection operator $\Pi_N$, but also increases the difficulty of convergence analysis under low-regularity conditions. 
\end{remark}

In the rest of this paper, we show that the combination of the filtered low-regularity integrator \eqref{HR-LRI_2} and high-frequency recovery process \eqref{HR_LRI_3}, ensures the stability and accuracy of the proposed method (which eliminates spurious oscillations in the numerical solution).




\subsection{Main theoretical results}

For the simplicity of notation, we denote by $A\lesssim B$ or $B\gtrsim A$ the statement $A\leq CB$ for some constant $C>0$. The value of $C$ may depend on $T$ and $\|U\|_{\gamma}$, and may be different at different occurrences, but is always independent of step size $\tau$, degrees of freedom $N$ (in each dimension), and time level $n$. The notation $A\sim B$ means that $A\lesssim B\lesssim A$. If a statement contains $s+$ or $s-$ for some number $s$, it means that the statement holds with $s+\epsilon$ or $s-\epsilon$ for arbitrary $\epsilon>0$; see Theorem \ref{thm:convergence}. 

The convergence of the proposed algorithm in \eqref{reformulated_scheme} in the general setting is presented in the following theorem.


\begin{theorem}\label{thm:convergence} 
Let $d=1,2$ and $\gamma\in (0,1]$, and assume that the nonlinear function $f:\R\rightarrow\R$ satisfies the following condition: 
 \begin{align}\label{cond:f}
     |f'(s)| + |f''(s)| + |f'''(s)| \lesssim 1 . 
\end{align}
Then, under the regularity condition $U \in C([0,T]; H^\gamma(\Omega) \times H^{\gamma-1}(\Omega))$ and the step size condition $N\lesssim \tau^{-\frac{1}{1- \gamma}+\epsilon_{0}}$ {\rm(}for an abitrary $\epsilon_{0} \in(0,1]${\rm)}, the numerical solutions given by \eqref{reformulated_scheme} for each $\alpha\geq 1$ converge to the solution of \eqref{eq-mv} with the following error estimates. 
\begin{itemize}
 \item[(i)]For $\gamma\in(0,\frac12]$, there exist constants $ \tau_0\in (0,1)$ and $C_0>0$ such that, for step size $ \tau\in (0, \tau_0]$, 
\begin{equation}\label{eq:thm_2}
\max _{0 \leq n \leq T / \tau}\left\|U(t_n)-U^n_N\right\|_0 \le C_0(N^{-\gamma+}+\tau N^{1-2\gamma+}) .
\end{equation}

\item[(ii)]For $\gamma\in(\frac12,1]$, there exist constants $ \tau_0\in (0,1)$ and $C_0>0$ such that, for step size $ \tau\in (0, \tau_0]$, 

\begin{equation}\label{eq:thm_1}
\max _{0 \leq n \leq T / \tau}\left\|U(t_n)-U^n_N\right\|_0 \le C_0( N^{-\gamma+}+\tau N^{1-2\gamma+}+\min(\tau,\tau^{2}N^{2(1-\gamma)+})) .
\end{equation}
 \end{itemize}
The constants $C_0$ and $\tau_0$ may depend on $\epsilon_0$ in the condition $N\lesssim \tau^{-\frac{1}{1- \gamma}+\epsilon_{0}}$ {\rm(}when $\epsilon_0$ is smaller, $\tau_0$ is smaller and $C_0$ is bigger{\rm)}.  
\end{theorem}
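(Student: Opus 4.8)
The plan is to carry out a standard but technically delicate convergence analysis based on comparing the exact solution with the numerical solution through the variation-of-constants formula, treating the low-frequency and high-frequency components separately. First I would write the exact solution of \eqref{eq-mv} via the Duhamel/variation-of-constants formula
\begin{align*}
U(t_{n+1}) = e^{\tau L} U(t_n) + \int_0^\tau e^{(\tau-s) L} F(U(t_n+s))\, ds ,
\end{align*}
and decompose it as $U = \Pi_N U + \Pi_{(N,N^\alpha]} U + \Pi_{>N^\alpha} U$ to mirror the structure of the scheme. The term $\Pi_{>N^\alpha} U$ is a pure projection error controlled by $N^{-\alpha\gamma} \lesssim N^{-\gamma}$ using the regularity $U \in H^\gamma \times H^{\gamma-1}$, which is harmless since $\alpha\geq 1$. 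For the high-frequency band $\Pi_{(N,N^\alpha]} U$, since the scheme \eqref{HR_LRI_3} propagates this band by the exact linear flow $e^{(n+1)\tau L}$, I would need to show that the nonlinear contribution to this band is negligible; this is where the cancellation structure in the variation-of-constants formula and the mapping properties in \eqref{propL} enter, together with the smoothing from $(2L)^{-1}$ and the $(-\Delta)^{-1/2}$-type factors on the high frequencies.

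The heart of the argument is the consistency analysis for the low-frequency part. I would Taylor-expand the nonlinearity inside the Duhamel integral, writing $F(U(t_n+s)) = F(U(t_n)) + s\,\partial_t F(U(t_n)) + O(s^2 \cdots)$, and match the resulting terms against the discrete scheme \eqref{HR-LRI_2}, where the leading term reproduces $\tau e^{\tau L} F$ and the first-order correction is precisely captured by the $(2L)^{-1}[\tau e^{\tau L} - (2L)^{-1}(e^{\tau L}-e^{-\tau L})] H$ term with $H$ defined in \eqref{defH}. The crucial point is that the operators $(2L)^{-1}$ and $(e^{\tau L}-e^{-\tau L})$ supply frequency-dependent smoothing: on a frequency $k$ the symbol behaves like $|k|^{-1}\min(\tau, |k|^{-1})$, so pairing this with the $H^{\gamma-1}$ regularity of the nonlinearity and summing over frequencies $|k|\lesssim N$ produces the consistency bounds $\tau N^{1-2\gamma+}$ (from the low-regularity remainder) and, in the regime $\gamma > \tfrac12$, the sharper $\min(\tau, \tau^2 N^{2(1-\gamma)+})$ term. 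The high/low-frequency decomposition is what lets me trade powers of $\tau$ against powers of $N$, and I expect to invoke a Littlewood–Paley or dyadic frequency decomposition together with the condition \eqref{cond:f} on $f$ (boundedness of $f', f'', f'''$) to control the nonlinear remainders in $H^{\gamma-1}$; the interpolation operator $I_N$ in place of $\Pi_N$ contributes an aliasing error that must be absorbed into the same bounds, which is the technically fussy part.

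Having established the local consistency error as a bound of the stated form at each step, I would close the argument with a discrete Gronwall / stability estimate. The linear flow is an isometry-like operator in the $0$-norm by \eqref{propL}, and the nonlinearity is Lipschitz in the relevant norm by \eqref{cond:f}, so the error propagation operator is bounded and the accumulation of $T/\tau$ local errors yields the global bound after multiplying the per-step consistency error by $\tau^{-1}$. The step size condition $N \lesssim \tau^{-\frac{1}{1-\gamma}+\epsilon_0}$ is exactly what guarantees that the products $\tau N^{1-\gamma}$ appearing in the stability constant remain bounded, so that the discrete Gronwall constant $C_0$ stays finite (and explains why $C_0$ blows up and $\tau_0$ shrinks as $\epsilon_0 \to 0$).

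The main obstacle, I expect, will be the consistency analysis at very low regularity $\gamma \in (0,\tfrac12]$: here the nonlinear term $F(U)$ need not even lie in $L^2$ in any robust sense when $u$ is merely $H^\gamma$ with $\gamma$ small, and one cannot differentiate $F(U(t_n+s))$ in $s$ naively. The delicate step is to extract enough temporal smoothness from the exact flow and enough frequency smoothing from the scheme's auxiliary operators to justify the expansion and to show the discrete approximation of the nonlinear frequency interaction via $H$ genuinely gains the order needed, all while the interpolation error from $I_N$ is controlled without assuming the continuity of $u$ that pointwise interpolation would ordinarily require. Separating the genuinely problematic high-frequency nonlinear interactions (handled by recovery and the smoothing estimates) from the benign low-frequency ones is the conceptual crux on which the sharpness of the stated rates depends.
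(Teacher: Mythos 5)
Your overall architecture (Duhamel formula, frequency decomposition, consistency estimate, discrete Gronwall) matches the paper's, and you correctly identify several key ingredients: the cancellation structure, a dyadic decomposition for the low-regularity consistency term, and the role of the step-size condition $N\lesssim\tau^{-\frac{1}{1-\gamma}+\epsilon_0}$. However, there are two concrete gaps.

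First, your stability step asserts that ``the nonlinearity is Lipschitz in the relevant norm by \eqref{cond:f}, so the error propagation operator is bounded.'' This is not true uniformly in $N$ for the term $H(\Pi_N U(t_n))-H(\Pi_N U_N^n)$ with $H$ as in \eqref{defH}: estimating $\|I_N f'(\Pi_N u_N^n)\,\Pi_N(v(t_n)-v_N^n)\|_{H^{-1}}$ requires a bound of the form $\|I_N f'(\Pi_N u_N^n)\|_{H^{1+}}\lesssim N^{1-\gamma+}$, which presupposes an a priori bound on $\|U_N^n\|_\gamma$ that does not follow from \eqref{cond:f} alone. The paper devotes a separate induction (Lemma \ref{lem1}) to establishing $\|U_N^n\|_\gamma\lesssim 1$, which in turn requires a consistency estimate in the $\gamma$-norm (not only the $0$-norm) and uses the step-size condition to absorb the resulting factor $\tau N^{1-\gamma+}\le\tau^{\epsilon}$ into the Gronwall constant. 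Your proposal never mentions a $\gamma$-norm consistency bound or a bootstrapped bound on the numerical solution; without them the Gronwall step does not close.

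Second, the consistency mechanism you describe is off in two places. (a) A naive Taylor expansion $F(U(t_n+s))=F(U(t_n))+s\,\partial_tF(U(t_n))+O(s^2)$ puts $f'(u)\partial_{tt}u=f'(u)(\Delta u-mu+f(u))$ into the second-order remainder, i.e.\ a full Laplacian of a function that is only in $H^\gamma$ with $\gamma\le 1$; this is exactly what must be avoided. The paper instead differentiates the twisted quantity $e^{-sL}F(\tilde U(s))$ twice and exploits the identity \eqref{ddG}, in which the second-order spatial derivatives cancel and only $|\tilde v|^2-|\nabla\tilde u|^2$ survives. You name ``the cancellation structure,'' but the expansion as you write it does not use it. (b) You attribute the $\tau N^{1-2\gamma+}$ contribution to symbol-level smoothing of $(2L)^{-1}(e^{\tau L}-e^{-\tau L})$; in the paper this term comes from the bilinear estimate $\|f'(\Pi_N u)\Pi_N v\|_{H^{-1}}\lesssim N^{1-2\gamma+}$ of Lemma \ref{lem:H_estimate}, proved by a dyadic decomposition of $f'(\Pi_N u)$, together with the aliasing error of $I_N$. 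The operator smoothing acts on the output frequencies of the product $f'(u)v$, not on its input frequencies, so it cannot by itself deliver the factor $N^{1-2\gamma}$; a genuine bilinear, paraproduct-type estimate is needed there.
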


\begin{remark}\upshape
	Under the condition \eqref{cond:f}, by constructing a contraction map, standard techniques can be used to prove that problem \eqref{eq-mv} admits a unique solution $U=(u,\partial_{t}u)\in L^{\infty}(0,T;H^{\gamma}\times H^{\gamma-1})$. This solution is automatically in $C([0,T];H^\gamma\times H^{\gamma-1})$. 
\end{remark}


\begin{remark}\upshape 
The theoretical error estimates in Theorem \ref{thm:convergence} implies that, by choosing $\tau\sim N^{-1}$ (independent of the regularity of the initial data) and $\alpha= 1$ (without high-frequency recovery), the errors of the numerical solutions is bounded by $O(\tau^{\gamma-})$ for approximating solutions in $H^\gamma(\Omega)\times H^{\gamma-1}(\Omega) $ with $\gamma \in (0,1]$. However, in practical computation, the errors of the numerical solutions can often be significantly reduced by using high-frequency recovery with $\alpha=1+\frac{1}{d}$ under the step size condition $\tau\sim N^{-1}$ with equivalent computational cost; see Remark \ref{Remark2.1}. Such advantages of the high-frequency recovery technique proposed in \eqref{reformulated_scheme} is demonstrated numerically in Section \ref{Section:Numerical} and proved rigorously in the following theorem in a particular setting, for approximating discontinuous solutions of bounded variation (such as piecewise smooth solutions) in one dimension. 

\end{remark} 


Let $BV(\Omega)$ denote the set of functions with bounded variations on $\Omega$ with norm 
$ \|u\|_{BV} := \|u\|_{L^1} + \|\nabla u\|_{M} , $ where $M$ denotes the norm of $M(\Omega)$, i.e., the space of Borel measures on $\Omega$ (the norm of $M(\Omega)$ is equivalent to the $L^1$ norm for integrable functions). 

\begin{theorem}\label{u11} 
Let $d=1$ (i.e., consider the one-dimensional problem) and assume that the solution has the following regularity: 
$$
(u,\partial_tu) \in C([0,T]; H^{\frac12-}(\Omega) \times H^{-\frac12-}(\Omega)) 
\quad\mbox{and}\quad 
u \in L^\infty(0,T; BV(\Omega)\cap L^{\infty}(\Omega))  .
$$ 
Then, under the step size condition $\tau\sim N^{-1}$, the numerical solutions given by \eqref{reformulated_scheme} with $\alpha=2$ converge to the solution of the continuous problem in \eqref{eq-mv} with the following rate:
\begin{align}\label{improved_result} 
\max _{0 \leq n \leq T / \tau}\left\|U(t_n)-U^n_N\right\|_0 
\lesssim \tau^{1-} . 
\end{align}
\end{theorem}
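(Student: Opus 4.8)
The plan is to exploit that, with $\alpha=2$, the scheme \eqref{reformulated_scheme} treats three frequency bands differently, and to show that under $\tau\sim N^{-1}$ each band contributes at most $O(\tau^{1-})$. Since the recovery \eqref{HR_LRI_3} produces no modes above $N^{2}$, I write
\begin{equation*}
U(t_n)-U^n_N=\Pi_N\big(U(t_n)-U^n_N\big)+\Pi_{(N,N^{2}]}\big(U(t_n)-U^n_N\big)+\Pi_{>N^{2}}U(t_n).
\end{equation*}
The governing idea is that the general estimate of Theorem \ref{thm:convergence} at $\gamma=\tfrac12-$ reads $O(N^{-1/2+}+\tau N^{0+})$, in which the temporal contribution $\tau N^{0+}\sim\tau^{1-}$ is already acceptable, whereas the bottleneck $N^{-1/2+}$ is precisely the outer spatial truncation at frequency $N$. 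High-frequency recovery pushes this truncation out to frequency $N^{2}$, and the $BV$ decay of the solution is what converts the improved truncation into the claimed $\tau^{1-}$ rate.

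For the tail I use that in one dimension $u\in BV$ gives $|\hat u(k)|\lesssim\|u\|_{BV}/|k|$, whence $\|\Pi_{>N^{2}}u\|_{L^2}\lesssim N^{-1}$; together with $\partial_t u\in H^{-1/2-}$, which yields $\|\Pi_{>N^{2}}\partial_t u\|_{H^{-1}}\lesssim N^{-1+}$, this gives $\|\Pi_{>N^{2}}U(t_n)\|_0\lesssim N^{-1+}\sim\tau^{1-}$. For the middle band I observe that $e^{tL}$ commutes with the frequency projections and that $\Pi_{(N,N^{2}]}U^0_N=\Pi_{(N,N^{2}]}U(0)$ by the choice $U^0_N=\Pi_{N^{2}}U(0)$; hence \eqref{HR_LRI_3} reproduces $\Pi_{(N,N^{2}]}e^{t_nL}U(0)$ exactly, and the middle-band error equals the neglected nonlinear Duhamel contribution
\begin{equation*}
\Pi_{(N,N^{2}]}\int_0^{t_n}e^{(t_n-s)L}F(U(s))\,ds.
\end{equation*}
Writing $e^{tL}$ through $\cos(t\omega)$ and $\omega^{-1}\sin(t\omega)$ with $\omega=(-\widetilde{\Delta})^{1/2}$, the restriction to frequencies above $N$ supplies a smoothing factor of size $N^{-1}$ in both components, while $u\in L^\infty$ (a consequence of $BV$ in one dimension) gives $\|f(u(s))\|_{L^2}\lesssim 1$; this bounds the middle-band error by $N^{-1}\sim\tau$.

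The principal work is the low-frequency band. Since the update \eqref{HR-LRI_2} for $\Pi_N U^{n+1}_N$ does not depend on $\alpha$, the one-step error recursion is identical to the one underlying Theorem \ref{thm:convergence}: using the stability bounds \eqref{propL} for $e^{\tau L}$ and the Lipschitz continuity of $F$ and of $H$ in \eqref{defH} (which follow from \eqref{cond:f}), I obtain $\|\Pi_N(U(t_{n+1})-U^{n+1}_N)\|_0\le(1+C\tau)\|\Pi_N(U(t_n)-U^n_N)\|_0+\delta^n$, where $\delta^n$ is the local consistency error comparing $\Pi_N\int_0^\tau e^{(\tau-s)L}F(U(t_n+s))\,ds$ with the discrete nonlinear approximation built from $I_N F(\Pi_N U^n_N)$ and $H(\Pi_N U^n_N)$. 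The integrator is designed so that the leading oscillatory integrals cancel, leaving a remainder controlled by the temporal data $f(u)$ and $\partial_s f(u(t_n+s))=f'(u)\partial_t u$; summing over the $O(\tau^{-1})$ steps reproduces the temporal bound $\tau N^{1-2\gamma+}$, which at $\gamma=\tfrac12-$ and $\tau\sim N^{-1}$ is $\tau^{1-}$. It is essential here that the error is measured in the weak norm $\|\cdot\|_0=L^2\times H^{-1}$: the smoothing $\omega^{-1}$ on the first component and the $H^{-1}$ weight on the second keep the nonlinear truncation-inside-$f$ error $\Pi_N\big(f(u)-f(\Pi_N u)\big)$ and the aliasing error of $I_N$ at size $N^{-1}$, so that they fall into the benign $\tau N^{1-2\gamma}$ budget rather than corrupting the $N^{-\gamma}$ bottleneck.

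The hard part is the borderline regularity $\gamma=\tfrac12$. The frequency-interaction estimates behind $\tau N^{1-2\gamma+}$ degenerate as $\gamma\uparrow\tfrac12$, and I would verify that the $BV$ structure supplies them uniformly: from \eqref{cond:f} and $u\in BV\cap L^\infty$ one gets $f(u)\in L^\infty(0,T;BV\cap L^\infty)$, hence $|\widehat{f(u)}(k)|\lesssim|k|^{-1}$ uniformly in time, which is exactly the borderline decay the estimates require and which also controls the trigonometric-interpolation error of the discontinuous function $f(u)$. The subtler object is $f'(u)\partial_t u$ in the second component of $H$: since $u$ is only $BV$ and $\partial_t u$ only $H^{-1/2-}$, this is not a genuine pointwise product but should be read as $\partial_s\big[f(u(t_n+s))\big]$ and estimated through integration by parts in the Duhamel integral, so that only $f(u)\in BV$ enters the bounds. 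Making this reading rigorous, tracking the $\epsilon$-losses so that the final exponent is genuinely $1-$ (and not $\tfrac12-$), and carrying out the Littlewood--Paley decomposition needed to certify that the low-frequency content of the nonlinear truncation error is $O(N^{-1})$ rather than the naive $O(N^{-1/2})$, is where the essential difficulty lies.
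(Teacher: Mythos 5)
Your three-band decomposition, the tail estimate, and the middle-band argument (the recovered band reproduces the linear flow exactly, so its error is the band-limited nonlinear Duhamel integral, which gains a factor $N^{-1}$ on frequencies above $N$ from the $(-\widetilde\Delta)^{-1/2}$ smoothing in the first component and from the $H^{-1}$ weight in the second) all match the paper's proof in substance and are correct; the difficulties you flag at the end are also the right ones. But there are two genuine gaps. First, your diagnosis that the $N^{-\gamma+}$ bottleneck of Theorem \ref{thm:convergence} is ``the outer spatial truncation at frequency $N$'' which the recovery ``pushes out to $N^{2}$'' is wrong: in the proof of Theorem \ref{thm:convergence} that term arises from summing the per-step consistency errors $R_1(t_n)$ and $R_8(t_n)$ (each $O(\tau N^{-\gamma+})$) and the stability terms $K^n_1,\dots,K^n_4$, all of which live entirely inside the low-frequency update and are untouched by the recovery; the recovery only removes the initial-datum truncation and supplies the middle band. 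To reach $\tau^{1-}$ on the low band one must actually prove the two improved consistency estimates that you defer: $\|R_1(t_n)\|_0\lesssim\tau N^{-2\gamma+}$ (the paper's Lemma \ref{lem:improved_R1}, via a dyadic decomposition of $F(U)-F(\Pi_N U)$ combined with \eqref{dualargu2}) and $\|R_8(t_n)\|_0\lesssim\tau N^{-1+}$ (Lemma \ref{lem:improved_R8}, via mollification $u_\varepsilon$, the $W^{1,1+}$ interpolation error bound of Lemma \ref{lemInterp_2}, and the $L^\infty$ bound to absorb $\|u-u_\varepsilon\|_{L^{1+}}$). Your proposed substitute for the latter, the Fourier decay $|\widehat{f(u)}(k)|\lesssim|k|^{-1}$, does not by itself control the aliasing error of $I_N$; moreover $I_N$ is applied to $f(\Pi_N u)$, not to $f(u)$, so $f(u)\in BV$ cannot be invoked directly.

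Second, the stability recursion $\|\Pi_N E^{n+1}_N\|_0\le(1+C\tau)\|\Pi_N E^n_N\|_0+\delta^n$ does not follow from ``Lipschitz continuity of $F$ and $H$'', because $I_N$ is not bounded on $L^2$ or $H^{-1}$: the term $\tau\|(I-I_N)\big(f(\Pi_N u(t_n))-f(\Pi_N u^n_N)\big)\|_{H^{-1}}$ (the paper's $K^n_2$) is only $O(\tau N^{-\gamma+})=O(\tau\cdot\tau^{\frac12-})$ by the general argument, which by itself would cap the final rate at $\tau^{\frac12-}$. The paper closes this by first establishing the auxiliary a priori bound $\|\Pi_N U(t_n)-\Pi_N U^n_N\|_1\lesssim\tau^{\frac12-}$ (Lemma \ref{lem:H1_error_bound}), so that $\Pi_N u^n_N$ inherits enough $BV$ control (via $\|\Pi_N(u(t_n)-u^n_N)\|_{BV}\le\|\Pi_N(u(t_n)-u^n_N)\|_{H^1}$) for the improved interpolation estimate to apply to the numerical solution as well. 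This bootstrapping step is entirely absent from your proposal, and without it the Gronwall argument on the low band does not close at the claimed rate.
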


\begin{remark}\label{resin}\upshape
For an initial value in $(u_0,v_0) \in BV(\Omega) \times M(\Omega)$, the additional regularity condition $u\in L^\infty(0,T; BV(\Omega) \cap L^{\infty}(\Omega))$ in Theorem \ref{u11} naturally holds for the one-dimensional sine-Gordon equation  
    \begin{align}\label{1d-sG}
\left\{
    \begin{aligned}
    &\partial_{tt} u-\partial_{xx}u=\sin(u) &&\mbox{in}\,\,\,(0,T]\times \Omega \\ 
    &u|_{t=0}=u_0,\quad \partial_tu|_{t=0}=v_0 &&\mbox{on}\,\,\,\Omega .
    \end{aligned}\right.
\end{align}
For example, we consider \eqref{1d-sG} with initial value $u_0\in BV(\Omega)$ on the interval $\Omega=[0,1]$ with the periodic boundary condition. 
Let $\tilde{u}$, $\tilde{u}_0$ and $\tilde{v}_0$ be the periodic extensions of $u$, $u_0$ and $v_0$ to $\mathbb{R}$, respectively. Then d'Alambert's formula and Duhamel's formula imply that 
\begin{align}\label{ddformu}
\tilde u(t,x)=\frac12\int_0^t \int_{x-t+s}^{x+t-s} \sin(\tilde u)(s,y) dyds
          +\frac12 (\tilde u_0(x+t)+\tilde u_0(x-t))
          +\frac12 \int_{x-t}^{x+t} \tilde v_0(y)dy . 
\end{align}
By taking the $BV$ and $L^\infty$ norm on both sides of \eqref{ddformu} and then summing up the two results, we can see that $u(t) \in BV(\Omega)$ for all $t\in[0,T]$ and 
\begin{align*}
    \|u(t)\|_{BV}+\|u(t)\|_{L^{\infty}} \lesssim T^{2} +\|u_0\|_{BV}+\|u_0\|_{L^{\infty}}+ \|v_0\|_{M}.
\end{align*}
This shows that the regularity condition in Theorem \ref{u11} naturally characterizes the regularity of the solutions with discontinuous initial data in $BV(\Omega)$. 

\end{remark}



\begin{remark}\upshape 
Without the high-frequency recovery, the convergence rate of the proposed method would reduce to half order from first order. This can be seen from the proof of Theorem \ref{u11} and can also be observed in the numerical tests. 
\end{remark}

\begin{remark}\upshape 
From the numerical examples in Section 7.2 we can see that the high-frequency recovery in \eqref{reformulated_scheme} also significantly improves the convergence rates of the numerical solution in two dimensions. We present numerical tests for approximating rough/discontinuous solutions by taking $\alpha=\frac{3}{2}$ in \eqref{reformulated_scheme} under the step size condition $\tau \sim N^{-1}$. The numerical results show that the proposed method has a convergence rate of order $\frac{3}{4}$, which is distinctively better than existing numerical methods.
\end{remark}

\section{Preliminary results}\label{section:technical}

In this section we present some preliminary results to be used in the proofs of Theorem \ref{thm:convergence} and Corollary \ref{u11}. These include Bernstein's inequalities in the $L^p$ norm (Lemma \ref{bound}), approximation properties of the trigonometric interpolation  (Lemma \ref{lemInterp}), $L^{p}$ error of trigonometric interpolation (Lemma \ref{lemInterp_2}), and negative-norm estimates for the product of two functions (Lemma \ref{Lemma:negative-norm} and Lemma \ref{lem:H_estimate}).  

\begin{lemma}[{Bernstein's inequality; cf. \cite[Theorem 2.2 and pp. 22]{Guo1998}}]\label{bound}
Let $f$ be a function such that $J^\gamma f:= (1- \Delta)^\frac \gamma2 f \in L^p( \Omega)$ for some $\gamma\ge0$ and $1<p<\infty$.
Then the following results hold:
\begin{align*}
 \| \Pi_{\le N}J^\gamma f\|_{L^p}
\lesssim
N^{\gamma}
\| f\|_{L^p},\qquad
\| \Pi_{> N} f\|_{L^p}
\lesssim
N^{-\gamma}
\|J^{\gamma}  f\|_{L^p}.
\end{align*}
\end{lemma}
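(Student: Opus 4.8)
The plan is to read both estimates as uniform-in-$N$ operator-norm bounds for Fourier multiplier operators on $L^p(\Omega)$, and to prove them by separating the sharp frequency cut-off from a smooth Bessel-potential factor. Writing the Fourier basis as $e^{2\pi i k\cdot x}$ with $k\in\Z^d$, the operator $J^\gamma$ has symbol $(1+4\pi^2|k|^2)^{\gamma/2}\sim(1+|k|)^\gamma$, and $\Pi_{\le N}$ projects onto the box $\{k:\max_j|k_j|\le N\}$. The one ingredient that is special to the range $1<p<\infty$ is the uniform boundedness of the sharp box projections,
\[
\|\Pi_{\le N}\|_{L^p\to L^p}+\|\Pi_{>N}\|_{L^p\to L^p}\lesssim1 .
\]
Because $\Pi_{\le N}$ is the composition of the $d$ one-dimensional partial-sum operators acting coordinatewise, this follows by iterating the M.\ Riesz theorem on the uniform $L^p(\T)$ boundedness of one-dimensional Dirichlet partial sums (equivalently, of the periodic conjugate function), and $\Pi_{>N}=I-\Pi_{\le N}$ is then bounded as well. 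I would record this first, noting that it is exactly what breaks at $p=1,\infty$.

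For the first estimate, fix $\phi\in C_c^\infty(\R)$ with $\phi\equiv1$ on $[-1,1]$ and $\mathrm{supp}\,\phi\subset[-2,2]$, put $\chi(\xi)=\prod_j\phi(\xi_j/N)$, and let $T_M$ denote the periodic multiplier operator with symbol $M(\xi)=\chi(\xi)(1+4\pi^2|\xi|^2)^{\gamma/2}$. Since $\chi\equiv1$ on $\{\max_j|\xi_j|\le N\}$, one has the exact identity $\Pi_{\le N}J^\gamma=\Pi_{\le N}\,T_M$. The dilation $\xi=N\eta$ gives $M(N\eta)=N^\gamma P_N(\eta)$ with $P_N(\eta)=\prod_j\phi(\eta_j)\,(N^{-2}+4\pi^2|\eta|^2)^{\gamma/2}$; using $|\partial^\alpha(a+|\eta|^2)^{\gamma/2}|\lesssim(a+|\eta|^2)^{(\gamma-|\alpha|)/2}$ with $a=N^{-2}\ge0$ and $\gamma\ge0$, one checks that $|\eta|^{|\alpha|}|\partial^\alpha P_N(\eta)|\lesssim|\eta|^\gamma\lesssim1$ on the bounded support of $P_N$, so $P_N$ satisfies the Mikhlin condition uniformly in $N$. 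By dilation invariance of the $\R^d$ multiplier norm and de Leeuw's transference principle, $\|T_M\|_{L^p(\T^d)}\lesssim N^\gamma$, and hence $\|\Pi_{\le N}J^\gamma f\|_{L^p}\le\|\Pi_{\le N}\|_{L^p\to L^p}\|T_Mf\|_{L^p}\lesssim N^\gamma\|f\|_{L^p}$.

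The second estimate is the mirror image. With $g=J^\gamma f$, pick a radial $\theta\in C^\infty(\R^d)$ with $\theta\equiv1$ on $\{|\xi|\ge1\}$ and $\theta\equiv0$ on $\{|\xi|\le1/2\}$, and let $S_N$ have symbol $s_N(\xi)=\theta(\xi/N)(1+4\pi^2|\xi|^2)^{-\gamma/2}$. Since $|k|>N$ on the range of $\Pi_{>N}$ we have $\theta(k/N)=1$ there, so $\Pi_{>N}J^{-\gamma}=\Pi_{>N}S_N$. The same dilation yields $s_N(\xi)=N^{-\gamma}q_N(\xi/N)$ with $q_N(\eta)=\theta(\eta)(N^{-2}+4\pi^2|\eta|^2)^{-\gamma/2}$ supported in $\{|\eta|\ge1/2\}$ and Mikhlin-bounded uniformly in $N$; thus $\|S_N\|_{L^p(\T^d)}\lesssim N^{-\gamma}$, and consequently $\|\Pi_{>N}f\|_{L^p}=\|\Pi_{>N}S_Ng\|_{L^p}\lesssim N^{-\gamma}\|g\|_{L^p}=N^{-\gamma}\|J^\gamma f\|_{L^p}$, which is the claim.

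I expect the only genuine obstacle to be the uniform $L^p$ boundedness of the rough box projections, i.e.\ the M.\ Riesz step: it is what forces $1<p<\infty$ and cannot be read off from the Mikhlin theorem, since the indicator of a box is not a smooth symbol. The remaining work — the dilation bookkeeping that extracts the powers $N^{\pm\gamma}$, the uniform Mikhlin estimates for the smooth Bessel factors, and the passage from $\R^d$ to $\T^d$ — is routine, and one could dispense with de Leeuw's theorem altogether by invoking the periodic Marcinkiewicz/Mikhlin multiplier theorem directly.
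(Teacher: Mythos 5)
Your proof is correct, but note that the paper contains no proof of this lemma to compare against: it is quoted directly from Guo (1998) (Theorem 2.2 and p.~22), so what you have written is a complete, self-contained argument where the paper simply cites the literature. Your structure is sound: the identities $\Pi_{\le N}J^\gamma=\Pi_{\le N}T_M$ and $\Pi_{>N}J^{-\gamma}=\Pi_{>N}S_N$ hold exactly as claimed (the latter because $\max_j|k_j|>N$ forces $|k|>N$, so $\theta(k/N)=1$ on the complement of the box), the uniform Mikhlin bounds survive the edge cases (near $\eta=0$ in the first estimate the two regimes $|\eta|\le N^{-1}$ and $|\eta|\ge N^{-1}$ give $|\eta|^{|\alpha|}|\partial^\alpha P_N(\eta)|\lesssim\min(N^{-\gamma},|\eta|^\gamma)\lesssim 1$, consistent with your claim), and you correctly attribute the restriction $1<p<\infty$ solely to the rough box projection, handled by iterating the one-dimensional M.~Riesz theorem coordinatewise. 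Compared with the classical route in the spectral-methods literature that the paper cites, the main difference is how the smooth Bessel factors are bounded: there one typically realizes the smoothly cut-off multipliers as convolutions with explicit kernels whose $L^1(\T^d)$ norms are $O(N^{\gamma})$ and $O(N^{-\gamma})$ and applies Young's inequality, which avoids de~Leeuw transference altogether and even covers $p=1,\infty$ for that portion of the argument, with the sharp projection handled by the same Riesz-projection fact you use. Your Mikhlin/de~Leeuw route buys a cleaner uniform statement at the symbol level, and, as you yourself observe, the transference step can be bypassed by invoking a periodic Marcinkiewicz--Mikhlin theorem directly; either way the argument is complete and the dependence on $1<p<\infty$ is correctly located.
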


\begin{lemma}[{Standard error of trigonometric interpolation; cf. \cite[Theorem 11.8]{Kress1989}}]\label{lemInterp}
Let $f$ be a function such that $f\in H^\gamma( \Omega)$.
For $0\leq s\leq \gamma$ and $ \gamma> \frac{d}{2}$, we have 
\begin{align*}
\| f - I_N f\|_{H^{s}} \lesssim N^{- (\gamma-s)} \| f\|_{H^\gamma}.
\end{align*}

\end{lemma}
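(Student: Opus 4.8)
The plan is to split the interpolation error into a truncation part and an aliasing part and to estimate each through the Fourier characterization of the Sobolev norms. Writing $f=\sum_{k\in\Z^d}\hat f_k\,e^{2\pi i k\cdot x}$, I would decompose
\[
f-I_Nf=(f-\Pi_Nf)+(\Pi_Nf-I_Nf)=\Pi_{>N}f+(\Pi_Nf-I_Nf),
\]
where the first term contains only modes $|k|_\infty>N$ and the second only modes $|k|_\infty\le N$. These are orthogonal in every $H^s$ (the $H^s$ inner product is weighted $L^2$ with diagonal weights $\langle k\rangle^{2s}$), so it suffices to bound each piece separately.

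For the truncation term I would use $\|\Pi_{>N}f\|_{H^s}^2=\sum_{|k|_\infty>N}\langle k\rangle^{2s}|\hat f_k|^2$ together with the elementary bound $\langle k\rangle^{2s}=\langle k\rangle^{2\gamma}\langle k\rangle^{-2(\gamma-s)}\le N^{-2(\gamma-s)}\langle k\rangle^{2\gamma}$, which holds for $|k|_\infty>N$ and $s\le\gamma$. This immediately gives $\|\Pi_{>N}f\|_{H^s}\lesssim N^{-(\gamma-s)}\|f\|_{H^\gamma}$, and is essentially the second Bernstein estimate of Lemma \ref{bound} specialized to $p=2$.

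The substantive step is the aliasing term. Since $I_Nf$ interpolates $f$ at the $(2N)^d$ grid points $D^d$ and lies in $S_N$, discrete orthogonality of the trigonometric basis yields the aliasing identity $\widehat{(I_Nf)}_k=\sum_{m\in\Z^d}\hat f_{k+2Nm}$ for $|k|_\infty\le N$, so the coefficients of $\Pi_Nf-I_Nf$ are $-\sum_{m\ne0}\hat f_{k+2Nm}$. I would then apply Cauchy--Schwarz in $m$,
\[
\Big|\sum_{m\ne0}\hat f_{k+2Nm}\Big|^2\le\Big(\sum_{m\ne0}\langle k+2Nm\rangle^{-2\gamma}\Big)\Big(\sum_{m\ne0}\langle k+2Nm\rangle^{2\gamma}|\hat f_{k+2Nm}|^2\Big).
\]
For $|k|_\infty\le N$ and $m\ne0$ one has $\langle k+2Nm\rangle\gtrsim N|m|$, so the first factor is $\lesssim N^{-2\gamma}\sum_{m\ne0}|m|^{-2\gamma}\lesssim N^{-2\gamma}$, and it is precisely convergence of this series that forces the hypothesis $\gamma>\tfrac d2$. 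Multiplying by $\langle k\rangle^{2s}\lesssim N^{2s}$, summing over $|k|_\infty\le N$, and observing that the index sets $\{k+2Nm:|k|_\infty\le N,\ m\ne0\}$ are disjoint so that the remaining double sum is dominated by $\|f\|_{H^\gamma}^2$, I obtain $\|\Pi_Nf-I_Nf\|_{H^s}\lesssim N^{-(\gamma-s)}\|f\|_{H^\gamma}$.

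I expect the main obstacle to be the bookkeeping of the aliasing identity: justifying that the high-frequency indices reached by aliasing are disjoint (so no mode of $f$ is double counted) and tracking the exact role of $\gamma>\tfrac d2$ in the summability of $\sum_{m\ne0}|m|^{-2\gamma}$. The Nyquist modes $|k_j|=N$ require a minor adjustment in the discrete orthogonality relation, but this affects only constants and not the stated rate. Combining the two bounds by the triangle inequality then completes the proof.
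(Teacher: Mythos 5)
The paper does not prove this lemma; it is quoted from Kress \cite[Theorem 11.8]{Kress1989} as a standard result. Your argument --- truncation plus aliasing, with the aliasing series controlled by Cauchy--Schwarz and the summability of $\sum_{m\neq 0}|m|^{-2\gamma}$ for $\gamma>\tfrac d2$ --- is precisely the classical proof of that result, and it is correct, including the correct identification of where the hypothesis $\gamma>\tfrac d2$ enters and the disjointness of the aliased index sets. The only loose end is the Nyquist-mode mismatch between the $2N$ grid points of $D$ and the $2N+1$ modes of $S_N$ per dimension, which you rightly flag as a convention affecting constants only.
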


\begin{lemma}[{Error of trigonometric interpolation in the $L^{p}$ norm}]\label{lemInterp_2} Let $d=1$ and $f\in W^{1,p}(\Omega)$ for $1<p<\infty$ then
\begin{align*}
\| f - I_N f\|_{L^{p}} \lesssim N^{- 1} \| f\|_{W^{1,p}}.
\end{align*}
\end{lemma}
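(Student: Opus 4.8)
The plan is to reduce the $L^p$ interpolation error to two ingredients: Bernstein's inequality (Lemma~\ref{bound}) together with the uniform $L^p$-boundedness of the Fourier projections, which handles the truncation part, and a Marcinkiewicz--Zygmund norm equivalence, which handles the interpolation part. Since $d=1$ and $p>1$, the embedding $W^{1,p}(\Omega)\hookrightarrow C(\Omega)$ guarantees that the nodal values $f(x_j)$, $x_j\in D$, are well defined, so $I_Nf$ makes sense, and it suffices to estimate $\|f-I_Nf\|_{L^p}$ by comparison with a trigonometric polynomial that $I_N$ reproduces exactly.

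First I would fix $q:=\Pi_{N-1}f\in S_{N-1}$, the truncated Fourier series of $f$. Because $q$ is a trigonometric polynomial of degree $\le N-1$ lying in the interpolation space and agreeing with $f$ at every node, the interpolation operator reproduces it, $I_Nq=q$; hence $f-I_Nf=(f-q)-I_N(f-q)$ and
$$\|f-I_Nf\|_{L^p}\le \|f-q\|_{L^p}+\|I_N(f-q)\|_{L^p}.$$
For the first term, $f-q=\Pi_{>N-1}f$, so Lemma~\ref{bound} gives $\|\Pi_{>N-1}f\|_{L^p}\lesssim N^{-1}\|J^1f\|_{L^p}\lesssim N^{-1}\|f\|_{W^{1,p}}$.

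For the interpolation term I would invoke the Marcinkiewicz--Zygmund inequality: since $I_N(f-q)$ is a trigonometric polynomial of degree $\le N$ sampled at the $2N$ equispaced nodes of $D$, its continuous $L^p$ norm is controlled by its discrete nodal $\ell^p$ norm, uniformly in $N$ for $1<p<\infty$. As $I_N(f-q)$ interpolates $h:=f-q$ at the nodes, this yields $\|I_N(f-q)\|_{L^p}^p\lesssim \frac1{2N}\sum_{x_j\in D}|h(x_j)|^p$. A scaled one-dimensional Sobolev embedding on each subinterval $I_j$ of length $\tfrac1{2N}$, namely $\|h\|_{L^\infty(I_j)}^p\lesssim |I_j|^{-1}\|h\|_{L^p(I_j)}^p+|I_j|^{p-1}\|h'\|_{L^p(I_j)}^p$, then converts the nodal sum into $\|h\|_{L^p}^p+N^{-p}\|h'\|_{L^p}^p$. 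Finally, $\|h\|_{L^p}=\|\Pi_{>N-1}f\|_{L^p}\lesssim N^{-1}\|f\|_{W^{1,p}}$ as above, while $h'=(\Pi_{>N-1}f)'=\Pi_{>N-1}(f')$, and Lemma~\ref{bound} with $\gamma=0$ gives $\|h'\|_{L^p}\lesssim\|f'\|_{L^p}$. Collecting the estimates produces the claimed bound $\|f-I_Nf\|_{L^p}\lesssim N^{-1}\|f\|_{W^{1,p}}$.

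I expect the main obstacle to be the interpolation term $\|I_N(f-q)\|_{L^p}$: unlike the $L^2$ case there is no Parseval identity, and the operator norm of $I_N$ from $C(\Omega)$ to $L^\infty$ grows like $\log N$, so a crude $L^\infty$ estimate both loses a logarithm and demands control of $\|f-q\|_{L^\infty}$, which is unavailable for merely $W^{1,p}$ data. The Marcinkiewicz--Zygmund equivalence is precisely what removes the logarithmic loss and keeps the argument at the level of $L^p$ and nodal norms; verifying that it applies in the present regime (degree $\le N$ against $2N$ equispaced nodes with $1<p<\infty$) and confirming the reproduction property $I_Nq=q$ under the paper's convention for $S_N$ are the points that require care.
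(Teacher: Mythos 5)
Your proof is correct in substance but follows a genuinely different route from the paper's. The paper disposes of this lemma in two lines: it cites known trigonometric interpolation error estimates (Hristov, and Prestin--Xu) to get $\|f-I_Nf\|_{L^p}\lesssim N^{-1}E_n(f')_p$, where $E_n(f')_p$ is the best $L^p$-approximation of $f'$ from $S_N$, and then bounds $E_n(f')_p\le\|\Pi_{>N}f'\|_{L^p}\lesssim\|f\|_{W^{1,p}}$. You instead give a self-contained argument: subtract the truncation $q=\Pi_{N-1}f$ (which $I_N$ reproduces), kill the tail $\Pi_{>N-1}f$ with Bernstein's inequality (Lemma \ref{bound}), and control $I_N$ applied to the tail via the Marcinkiewicz--Zygmund equivalence plus a scaled Sobolev embedding on the $2N$ subintervals. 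What your approach buys is transparency -- every step is reduced to tools already stated in the paper or to one classical inequality -- whereas the paper's proof buys brevity by outsourcing exactly the step you identify as hardest (the $L^p$ bound on the interpolant without a logarithmic loss) to the cited references. Note that both routes ultimately rest on the same harmonic-analysis input, the $L^p$-boundedness of the conjugate function for $1<p<\infty$, which underlies both the Marcinkiewicz--Zygmund inequality and the uniform $L^p$ bound for $\Pi_N$.

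One point you correctly flag deserves emphasis: the direction of Marcinkiewicz--Zygmund you need, namely that the continuous $L^p$ norm is dominated by the discrete nodal norm, is \emph{false} for the full space of degree-$N$ trigonometric polynomials sampled at $2N$ equispaced nodes, since $\sin(2\pi Nx)$ vanishes at every node. The fix is exactly as you suggest: apply the inequality on the $2N$-dimensional range of $I_N$ (the standard convention symmetrizing or excluding the highest mode), on which the nodal $\ell^p$ norm is a genuine norm and the equivalence holds uniformly in $N$ for $1<p<\infty$. With that convention fixed, your argument closes.
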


\begin{proof}
Using the trigonometric interpolation error estimates in \cite[Theorem 3]{Hristov1989} and \cite[Lemma 2.3]{Prestin1994}, one get
\begin{align*}
\| f - I_N f\|_{L^{p}} \lesssim N^{-1} E_n(f^{\prime})_p,
\end{align*}
where $E_n(f^{\prime})_p$ is the best approximation of $f^{\prime}$ with trigonometric polynomials from $S_N$ in the $L^{p}$ norm:
$$E_n(f^{\prime})_p:=\inf \{\|f^{\prime}-T\|_{L^{p}}: T\in S_N\}\leq \|f^{\prime}-\Pi_{\leq N} f^{\prime}\|_{L^{p}}= \|\Pi_{> N} f^{\prime}\|_{L^{p}}\lesssim \|f\|_{W^{1,p}}.$$
\end{proof}

\begin{lemma}[Negative-norm estimates for the product of two functions]\label{Lemma:negative-norm}
For $d = 1,2$, the following estimates hold: 
\begin{align}
\label{dualargu2}
\| fg\|_{H^{-1}}&\lesssim \| g\|_{H^{-1}} \left( \|f\|_{L^\infty} +\| f\|_{H^{1+}}\right),\\
\label{embedding3}
\| fg\|_{H^{-1}}&\lesssim \| f\|_{L^{2+}}^ \gamma \| f\|_{H^{1+}}^ {1- \gamma} \| g\|_{H^{ \gamma -1}},\\
\label{embedding2}
\| fg\|_{H^{-1}}&\lesssim \| f\|_{L^2} \| g\|_{L^{2+}}.
\end{align}
In addition, for any $\gamma\in (0,1]$ and function $h\in L^\infty$, the following estimate holds: 
\begin{align}
\label{embedding}
\| fgh\|_{H^{\gamma -1}}&\lesssim \| f\|_{L^2} \| g\|_{H^ {\gamma+}} \| h\|_{L^\infty}.
\end{align} 
\end{lemma}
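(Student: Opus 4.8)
The plan is to prove the four product estimates in Lemma \ref{Lemma:negative-norm} by duality together with the Sobolev embeddings available in dimensions $d=1,2$. The common mechanism is that $\|w\|_{H^{-1}}$ equals the supremum of $(w,\varphi)$ over test functions with $\|\varphi\|_{H^1}\le 1$, so each bound reduces to estimating a trilinear integral $\int fg\varphi$ (or $\int fgh\varphi$) by H\"older's inequality and then absorbing the $\varphi$-factor using $\|\varphi\|_{L^q}\lesssim\|\varphi\|_{H^1}$ for the appropriate $q$. The key embeddings I would use are $H^1\hookrightarrow L^q$ for every $q<\infty$ when $d=2$ (and $H^1\hookrightarrow L^\infty$ when $d=1$), and more generally $H^{s}\hookrightarrow L^{p}$ with $\frac1p=\frac12-\frac{s}{d}$; the ``$+$'' and ``$2+$'' notation in the statement is precisely there to give the small amount of extra regularity or integrability needed so that these endpoint embeddings become strict (non-endpoint) and hence valid in both dimensions.

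First I would treat \eqref{embedding2}, the simplest: by duality $\|fg\|_{H^{-1}}=\sup_{\|\varphi\|_{H^1}\le1}|\int fg\varphi|$, and H\"older with exponents $(2, 2+, q)$ where $\frac1q=\frac12-\frac1{2+}$ gives $|\int fg\varphi|\le\|f\|_{L^2}\|g\|_{L^{2+}}\|\varphi\|_{L^q}$; since $q<\infty$ the embedding $H^1\hookrightarrow L^q$ holds in $d=1,2$ and absorbs $\varphi$. Next, for \eqref{embedding3} I would interpolate: write the pairing as $\int f\,g\,\varphi$ and pair $g\in H^{\gamma-1}$ against $f\varphi$, so I need $\|f\varphi\|_{H^{1-\gamma}}\lesssim\|f\|_{L^{2+}}^\gamma\|f\|_{H^{1+}}^{1-\gamma}\|\varphi\|_{H^1}$. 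This follows from the fractional Leibniz (Kato--Ponce) rule distributing the $1-\gamma$ derivatives, combined with the logarithmic-convexity interpolation inequality $\|f\|_{H^{(1-\gamma)+}}\lesssim\|f\|_{L^{2+}}^{\gamma}\|f\|_{H^{1+}}^{1-\gamma}$; the exponents $\gamma$ and $1-\gamma$ are exactly the interpolation weights between $L^2$-type and $H^1$-type control of $f$. For \eqref{dualargu2} I would again dualize and pair $g\in H^{-1}$ against $f\varphi$, reducing matters to the bilinear estimate $\|f\varphi\|_{H^1}\lesssim(\|f\|_{L^\infty}+\|f\|_{H^{1+}})\|\varphi\|_{H^1}$; expanding $\nabla(f\varphi)=(\nabla f)\varphi+f\nabla\varphi$, the second term is handled by $\|f\|_{L^\infty}\|\nabla\varphi\|_{L^2}$ and the first by $\|\nabla f\|_{L^{p}}\|\varphi\|_{L^{q}}$ with $H^{1+}\hookrightarrow W^{1,p}$ and $H^1\hookrightarrow L^q$, which is where the extra ``$+$'' on $f$ is consumed.

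Finally, for \eqref{embedding} I would use duality against $H^{1-\gamma}$: $\|fgh\|_{H^{\gamma-1}}=\sup_{\|\varphi\|_{H^{1-\gamma}}\le1}|\int fgh\varphi|$, then bound $|\int fgh\varphi|\le\|f\|_{L^2}\|h\|_{L^\infty}\|g\varphi\|_{L^2}$ and control $\|g\varphi\|_{L^2}\lesssim\|g\|_{H^{\gamma+}}\|\varphi\|_{H^{1-\gamma}}$ via the product law in fractional Sobolev spaces (the sum of the two smoothness indices $\gamma$ and $1-\gamma$ equals $1$, which in $d=1,2$ is enough, with the ``$+$'' supplying the strict inequality needed for the multiplication to be bounded). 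I expect the main obstacle to be the careful bookkeeping of the endpoint embeddings, that is, verifying that each H\"older triple of exponents is admissible simultaneously for $d=1$ and $d=2$ and that the strict gains encoded by ``$+$'' and ``$2+$'' are exactly what push the critical embeddings into their valid non-endpoint range; once the correct exponents are identified, the fractional Leibniz rule and standard Sobolev interpolation make each estimate routine.
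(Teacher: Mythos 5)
Your proposal is correct and follows essentially the same route as the paper: duality against $H^1$ (or $H^{1-\gamma}$) test functions, H\"older with the non-endpoint Sobolev embeddings available for $d=1,2$, and interpolation with weights $\gamma$, $1-\gamma$ for \eqref{embedding3}. The only minor variation is that for \eqref{embedding3} the paper interpolates the product norm directly via $\| fw\|_{H^{1-\gamma}}\lesssim \| fw\|_{L^2}^{\gamma}\| fw\|_{H^1}^{1-\gamma}$ rather than invoking the fractional Leibniz rule, and for \eqref{embedding2} and \eqref{embedding} it uses the embeddings $L^{1+}\hookrightarrow H^{-1}$ and $L^q\hookrightarrow H^{\gamma-1}$ directly instead of dualizing, which are equivalent formulations of your argument.
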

\begin{proof}
By the dual argument and Sobolev embedding in one and two dimensions (i.e., $H^{1+}\hookrightarrow L^\infty$), we have
\begin{align*}
\| fg\|_{H^{-1}} &= \sup_{ \| w\|_{H^{1}}=1} 
\langle g, fw \rangle \lesssim  \sup_{ \| w\|_{H^{1}}=1} \| g\|_{H^{-1}} \| fw\|_{H^{1}}\\
&\lesssim \sup_{ \| w\|_{H^{1}}=1} \| g\|_{H^{-1}} \left( \|f\|_{L^\infty} \| w\|_{H^{1}} + \| w\|_{L^{\infty-}} \| f\|_{H^{1+}}\right)\\
&\lesssim \sup_{ \| w\|_{H^{1}}=1} \| g\|_{H^{-1}} \left( \|f\|_{L^\infty} +\| f\|_{H^{1+}}\right)\| w\|_{H^{1}}.
\end{align*}
This completes the proof of \eqref{dualargu2}. Similarly, for \eqref{embedding3}, we have 
\begin{align}\label{proof0}
\| fg\|_{H^{-1}}&= \sup_{ \| w\|_{H^1} = 1} \langle g, fw \rangle
\lesssim \sup_{\| w\|_{H^1} = 1} \| g\|_{H^{ \gamma - 1}} \| fw\|_{H^{1- \gamma}}.
\end{align}
By the interpolation inequality and embeddings $H^1\hookrightarrow L^{\infty -}$ and $H^{1+}\hookrightarrow L^\infty$, we obtain
\begin{align}\label{proof2}\notag
\| fw\|_{H^{1- \gamma}}\lesssim 
\| fw\|_{L^2}^ \gamma 
	\| fw\|_{H^1}^{1- \gamma}
&\lesssim 
	\| f\|_{L^{2+}}^ \gamma \| w\|_{L^{\infty -}}^ \gamma \left[
	\| J f\|_{L^{2+}}^{1- \gamma} \| w\|_{L^{\infty-}}^{1- \gamma} + \| f\|_{L^\infty}^{1- \gamma} \| J w\|_{L^2}^{1- \gamma}
	\right]\\
&\lesssim \| f\|_{L^{2+}}^ \gamma \| f\|_{H^{1+}}^{1- \gamma} \| w\|_{H^1}.
\end{align}
Substituting \eqref{proof2} into \eqref{proof0} leads to \eqref{embedding3}.

Estimate \eqref{embedding2} can be verified by using the Sobolev embedding result 
$L^{1+}\hookrightarrow H^{-1}$ in one- and two-dimensional spaces.
 
For \eqref{embedding},    the Sobolev embedding  $L^q\hookrightarrow H^{ \gamma - 1}$ with  $ \gamma\in (0,1]$,
$ 1/q = 1/2 - (\gamma-1)/d $
leads to the following inequalities 
\begin{align*}
\| fgh\|_{H^{ \gamma -1}}\lesssim \| fg\|_{L^q} \| h\|_{L^\infty}
\le \left\{\begin{array}{ll}
f\|_{L^2} \| g\|_{L^\frac{d}{1- \gamma}}\| h\|_{L^\infty},\quad &\text{for} \quad \gamma\in (0,1),\\[2mm]
\| f\|_{L^2} \| g\|_{L^\infty}\| h\|_{L^\infty},\quad &\text{for}\quad \gamma=1.
\end{array}
\right.
\end{align*}
Then, the Sobolev embedding 
$H^{\gamma}\hookrightarrow L^\frac{d}{1-\gamma}$ and $H^{1+}\in L^{\infty}$ yields
\begin{align*}
    \| fgh\|_{H^{ \gamma -1}}&\lesssim \| f\|_{L^2} \| g\|_{H^ {\gamma+}} \| h\|_{L^\infty}.
\end{align*} 
This proves the desired results.
\end{proof}

\begin{lemma}\label{lem:H_estimate}
Let $(u,v)^{T}\in H^{\gamma}(\Omega)\times H^{\gamma-1}(\Omega)$ and assume that $f$ satisfies the conditions in \eqref{cond:f}. Then the following estimates hold: 
\begin{align}\label{gamma_leq_frac12}
\|f^{\prime}(\Pi_N u)\Pi_N v\|_{H^{-1}} &\lesssim N^{1-2\gamma+} && \text{for}\quad \gamma\in \mbox{$(0,\frac12]$}, \\ 
\label{gamma_greater_frac12}
\|f^{\prime}(\Pi_N u)\Pi_N v\|_{H^{-1}} &\lesssim 1 && \text{for}\quad \gamma\in \mbox{$(\frac12,1]$}.
\end{align}
\end{lemma}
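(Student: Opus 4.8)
The plan is to estimate the $H^{-1}$ norm of the product $f'(\Pi_N u)\Pi_N v$ by combining the negative-norm product estimates from Lemma \ref{Lemma:negative-norm} with the Bernstein inequalities from Lemma \ref{bound}. The key structural observation is that under \eqref{cond:f} the function $f'$ is Lipschitz with $\|f'\|_{L^\infty}\lesssim 1$ and $\|f''\|_{L^\infty}\lesssim 1$, so the ``multiplier'' $f'(\Pi_N u)$ is bounded in $L^\infty$ and its regularity is controlled by that of $\Pi_N u$ via the chain rule. The essential difficulty is that when $\gamma\le\frac12$ the factor $\Pi_N v$ lives only in $H^{\gamma-1}$ with $\gamma-1\le-\frac12$, which is a genuinely negative Sobolev space, so one cannot simply pull out $\|v\|_{H^{\gamma-1}}\lesssim 1$ against an $H^{-1}$ estimate; the mismatch between the available regularity $\gamma-1$ and the target $-1$ must be paid for by spectral factors of $N$, and quantifying this loss sharply is the crux of the lemma.

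For the case $\gamma\in(\frac12,1]$ in \eqref{gamma_greater_frac12}, I would apply \eqref{embedding3} directly with $f\rightsquigarrow f'(\Pi_N u)$ and $g\rightsquigarrow \Pi_N v$, giving
\begin{align*}
\|f'(\Pi_N u)\Pi_N v\|_{H^{-1}} \lesssim \|f'(\Pi_N u)\|_{L^{2+}}^{\gamma}\,\|f'(\Pi_N u)\|_{H^{1+}}^{1-\gamma}\,\|\Pi_N v\|_{H^{\gamma-1}}.
\end{align*}
Here $\|\Pi_N v\|_{H^{\gamma-1}}\lesssim \|v\|_{H^{\gamma-1}}\lesssim 1$, and $\|f'(\Pi_N u)\|_{L^{2+}}\lesssim 1$ follows from $\|f'\|_{L^\infty}\lesssim 1$ on the torus. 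The term $\|f'(\Pi_N u)\|_{H^{1+}}$ is controlled via the chain rule $\nabla f'(\Pi_N u)=f''(\Pi_N u)\nabla\Pi_N u$ together with $\|f''\|_{L^\infty}\lesssim 1$ and the Bernstein estimate $\|\nabla \Pi_N u\|\lesssim N^{(1-\gamma)+}\|u\|_{H^\gamma}$, which would in principle bring in a factor $N^{(1-\gamma)(1-\gamma)+}$; one must then verify that, in the regime $\gamma>\frac12$ (so $1-\gamma<\frac12$), this is absorbed into the claimed $O(1)$ bound, possibly by using the sharper embedding so that no spectral growth survives, or by bounding $\|f'(\Pi_N u)\|_{H^{1+}}$ more carefully via $u\in H^\gamma$ with $\gamma$ close to $1$. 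Care with the $\epsilon$-losses hidden in the $+$ notation is needed to confirm the right side is truly $O(1)$.

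For the harder case $\gamma\in(0,\frac12]$ in \eqref{gamma_leq_frac12}, the right-hand side is allowed to grow like $N^{1-2\gamma+}$, so the strategy is to trade the negative regularity of $\Pi_N v$ against frequency truncation. I would raise $\Pi_N v$ from $H^{\gamma-1}$ to $H^{-1}$ at the cost of a Bernstein factor: since $-1\le\gamma-1$ when $\gamma\ge0$ the wrong direction, so instead I would place $v$ at its natural level and move the loss onto lifting the product into $H^{-1}$, estimating $\|\Pi_N v\|_{H^{-\gamma}}\lesssim N^{1-2\gamma+}\|v\|_{H^{\gamma-1}}$ via Lemma \ref{bound} applied to the truncated high/low frequencies, then using a product estimate pairing $H^{-\gamma}$ with the multiplier. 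The main obstacle is selecting the product inequality that matches $\Pi_N v\in H^{-\gamma}$ with a multiplier factor in a positive space and lands in $H^{-1}$ while generating exactly the exponent $1-2\gamma+$ and no more; I expect this to require a duality pairing $\langle \Pi_N v, f'(\Pi_N u)w\rangle$ with $\|w\|_{H^1}=1$, bounding $\|f'(\Pi_N u)w\|_{H^{1-\gamma+}}$ by the embedding $H^{1}\hookrightarrow L^{\infty-}$ and the chain rule as above, and then invoking Bernstein on $\Pi_N v$ to convert the gap between $1-\gamma$ and $1-(\gamma-1)=2-\gamma$ of regularity demanded of the test function into the factor $N^{1-2\gamma+}$. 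Balancing these spectral exponents to hit the stated rate exactly, rather than an over- or under-estimate, will be the delicate part.
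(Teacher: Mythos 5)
There is a genuine gap: the one idea that makes this lemma work is missing. The paper does not apply \eqref{embedding3} to $f'(\Pi_N u)\Pi_N v$ directly; it first telescopes the multiplier dyadically in frequency,
\begin{align*}
f'(\Pi_N u)\Pi_N v \;=\; f'(\Pi_{2}u)\Pi_N v+\sum_{k=1}^{\log_2 N-1}\bigl(f'(\Pi_{2^{k+1}}u)-f'(\Pi_{2^{k}}u)\bigr)\Pi_N v ,
\end{align*}
and only then applies \eqref{embedding3} to each block. The point is that the block $f'(\Pi_{2^{k+1}}u)-f'(\Pi_{2^{k}}u)$ is \emph{small} in $L^{2+}$, of size $(2^k)^{-\gamma+}$ (by the Lipschitz bound on $f'$ and Bernstein applied to $\Pi_{2^{k+1}}u-\Pi_{2^k}u$), while it is only moderately large, $(2^k)^{1-\gamma+}$, in $H^{1+}$. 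Interpolating with the exponents $\gamma$ and $1-\gamma$ of \eqref{embedding3} gives $(2^k)^{-\gamma\cdot\gamma+(1-\gamma)^2+}=(2^k)^{1-2\gamma+}$ per block, and summing the geometric-type series yields $N^{1-2\gamma+}\log_2N$ for $\gamma\le\tfrac12$ and an $O(1)$ bound for $\gamma>\tfrac12$ (where the exponent is negative). Your direct application replaces the small factor $(2^k)^{-\gamma+}$ by $\|f'(\Pi_N u)\|_{L^{2+}}^{\gamma}\lesssim 1$, so the best you can get is $N^{(1-\gamma)^2+}$. Since $(1-\gamma)^2-(1-2\gamma)=\gamma^2>0$, this is strictly worse than the claimed rate for every $\gamma\in(0,1)$; in particular, for $\gamma\in(\tfrac12,1)$ it is unbounded in $N$, so your case (ii) does not close. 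The repair you gesture at (``a sharper embedding so that no spectral growth survives'') cannot work, because $\|f'(\Pi_N u)\|_{H^{1+}}\sim N^{1-\gamma+}$ is genuinely the right size when $u$ is only in $H^\gamma$ and $f''\not\equiv 0$.

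Your alternative route for case (i) — Bernstein on $\Pi_N v$ to shift it from $H^{\gamma-1}$ to $H^{-\gamma}$ at cost $N^{1-2\gamma}$, then a duality pairing — runs into the same wall: you must then put $f'(\Pi_N u)w$ into $H^{\gamma}$ for $w$ with $\|w\|_{H^1}=1$, which costs another positive power of $N$ (roughly $N^{\gamma(1-\gamma)+}$ by the same interpolation), so the exponents do not balance to $1-2\gamma$. The cancellation you are looking for is not in the pairing between $v$ and the test function; it is in the frequency decomposition of $f'(\Pi_N u)$ itself. With the telescoping decomposition inserted, the rest of your toolkit (estimate \eqref{embedding3}, the chain rule with $\|f''\|_{L^\infty},\|f'''\|_{L^\infty}\lesssim 1$, and Bernstein) is exactly what the paper uses.
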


\begin{proof}
Without loss of generality, we assume that $\log_2 N $ is an integer (otherwise we replace $\log_2 N $ by the smallest integer larger than it). Then, by using the triangle inequality, we have 
\begin{align}\label{star}
\| f^\prime (\Pi_N  u) \cdot \Pi_N  v \|_{H^{-1}}
&\le \sum_{k=1}^{\log_2 N-1} 
\| (f^\prime( \Pi_{2^{k+1}}  u) 
        - f^\prime( \Pi_{2^{k}}  u))
        \Pi_N  v\|_{H^{-1}}.
\end{align}
The right-hand side of \eqref{star} can be estimated by using \eqref{embedding3}, i.e., 
\begin{align*}
	&\quad \| (f^\prime( \Pi_{2^{k+1}}  u) 
	- f^\prime( \Pi_{2^{k}}  u))
	\Pi_N v\|_{H^{-1}}\\
	&\lesssim \| f'( \Pi_{2^{k+1}}  u) - f'( \Pi_{2^{k}}  u)\|_{L^{2+}}^ \gamma 
	\| f'( \Pi_{2^{k+1}}  u) - f'( \Pi_{2^{k}}  u)\|_{H^{1+}}^ {1-\gamma}
	\|\Pi_N  v \|_{H^{ \gamma -1}}\\
	&\lesssim\|f''\|_{L^\infty}^{\gamma}\|\Pi_{2^{k+1}}  u-\Pi_{2^{k}}  u\|_{L^{2+}}^{\gamma}\cdot\left(\| f'( \Pi_{2^{k+1}}  u) \|_{H^{1+}}+\| f'( \Pi_{2^{k}}  u)\|_{H^{1+}}
	\right)^ {1-\gamma}\|\Pi_N  v \|_{H^{ \gamma -1}}\\
	&\lesssim [(2^k)^{- \gamma +}]^ \gamma [(2^k)^{1- \gamma +}]^ {1-\gamma}
	\lesssim (2^k)^{1-2 \gamma +},
\end{align*}
here in the third inequality, we have used the boundedness of $\|f''\|_{L^\infty}$ and $\|\Pi_N  v \|_{H^{ \gamma -1}}$, and the following estimates: 
\begin{align*}
	\|\Pi_{2^{k+1}}  u-\Pi_{2^{k}}  u\|_{L^{2+}}&\lesssim \|\Pi_{2^{k+1}}  u-\Pi_{2^{k}}  u\|^{1-}_{L^{2}} \|\Pi_{2^{k+1}}  u-\Pi_{2^{k}}  u\|^{0+}_{H^1}\\
	&\lesssim \left(\big(2^{k}\big)^{-\gamma}\|u\|_{H^{\gamma}}\right)^{1-}\left(\left(2^{k}\right)^{1-\gamma}\|u\|_{H^{\gamma}} \right)^{0+}\lesssim (2^{k})^{-\gamma+},
\end{align*}
and
\begin{align*}
	\| f'( \Pi_{2^{k}}  u)\|_{H^{1+}}
	&\lesssim \| f'( \Pi_{2^{k}}  u) \|_{H^{1}}^{1-}\| f'( \Pi_{2^{k}}  u) \|_{H^{2}}^{0+}\\
	&\lesssim \left(\|f'\|_{L^{\infty}}+\|f''\|_{L^{\infty}}\|\Pi_{2^{k}}  u\|_{H^1}\right)^{1-}\\
	&\quad\cdot\left(\|f'\|_{L^{\infty}}+\|f''\|_{L^{\infty}}\|\Pi_{2^{k}}  u\|_{H^2}+\|f'''\|_{L^{\infty}}\left\|\left(\Pi_{2^k}\nabla u\right)^2\right\|_{L^2}\right)^{0+}\\
	&\lesssim \left(1+\left(2^{k}\right)^{1-\gamma}\|u\|_{H^{\gamma}}\right)^{1-}\left[1+\left(2^{k}\right)^{2-\gamma}\|u\|_{H^{\gamma}}+\left(\left(2^{k}\right)^{1+\frac{d}{4}-\gamma}\|u\|_{H^{\gamma}}\right)^2\right]^{0+}\\
	&\lesssim \left(2^{k}\right)^{1-\gamma+}.
\end{align*}
For $ \gamma\in (0,\frac12]$, we sum up the above estimate for $k=1,\dots,\log_2N -1$. This leads to the following result: 
\begin{align*}
 \| \Pi_N( f^\prime (\Pi_N \tilde u) \cdot \Pi_N \tilde v )\|_{H^{-1}}
&\lesssim \sum_{k=1}^{\log_2 N -1}
 (2^k)^{1-2 \gamma+}
 \lesssim N^{1-2 \gamma + } \cdot \log_2 N.
\end{align*} 
This proves the first result of Lemma \ref{lem:H_estimate}, where the term $\log_2N$ can be absorbed into the $N^{1-2 \gamma +} $ with the presence of the ``$+$'' in the exponent. 

For $\gamma \in (\frac12,1]$, we have $1-2\gamma+<0$ and therefore $ \| \Pi_N( f^\prime (\Pi_N \tilde u) \cdot \Pi_N \tilde v )\|_{H^{-1}}
\lesssim 1 $. 
This proves the second result of Lemma \ref{lem:H_estimate}. 
\end{proof}

Next, we present some useful properties/structures of the vector-valued function $F(U)$ that play important roles in the convergence of the proposed low-regularity integrator for approximating rough solutions below the energy space. Since $F(U) =(0, f(u))^T$, it is straightforward to verify that, for $W =(w_1, w_2)^T$ and $ W^* = (w_1^*, w_2^*)^T$,
\begin{align}\label{dF}
	F^\prime(U) = \begin{pmatrix} 0 & 0\\ f^\prime(u) & 0\end{pmatrix}, \quad 
	F^\prime(U)W = \begin{pmatrix}0\\ f^\prime(u)w_1\end{pmatrix},\quad F^{\prime\prime}(U)W\cdot W^* = \begin{pmatrix}0\\ f^{\prime\prime}(u) w_1w_1^*\end{pmatrix}.
\end{align}

The next lemma is a fundamental ingredient in the construction of the second-order low-regularity integrator proposed in \cite{LSZ}.
\begin{lemma}\label{mainlem}
Let $U = (u,v)^T$ and $\tilde U(s) := e^{sL} U = (\tilde u(s), \tilde v(s))^T$.
Then the following identities hold
\begin{align}\label{dG}
\dfrac{d}{d s} e^{- s L} F ( \tilde U(s) )
=
e^{-sL}\begin{pmatrix}
-  f(\tilde u(s))\\
f^\prime ( \tilde u(s)) \cdot  \tilde v(s)
\end{pmatrix},
\end{align}
and 
\begin{align}\label{ddG}
\dfrac{d}{ds} \left[e^{2s L} 
\dfrac{d}{ds} \big(e^{- s L}F(\tilde U( s))\big)\right] 
= e^{sL} \begin{pmatrix} 0\\ f^{\prime\prime}
	 (\tilde u(s))(|\tilde v(s)|^2 - |\nabla \tilde u(s)|^2)+m\left(\tilde{u}(s)-f(\tilde{u}(s))\right)
\end{pmatrix}.
\end{align} 
Moreover,
\begin{align}\label{I1form}\notag
\int_0^ \tau e^{ (\tau-s)L}F(\tilde U(s)) d s 
&= \tau e^{ \tau L} F(U) + (2L)^{-1} [ \tau e^{ \tau L} - (2L)^{-1} (e^{ \tau L} - e^{- \tau L})]
\begin{pmatrix}
-f(u)\\ f^\prime (u) v
\end{pmatrix}\\
&\quad+ \int_0^ \tau e^{ (\tau - 2 s) L} ( \tau -s) \int_0^s \dfrac{d}{d \sigma} 
\Big[e^{2 \sigma L} 
\dfrac{d}{d \sigma} \big(e^{- \sigma L}F(\tilde U( \sigma))\big) \Big] d \sigma ds.
\end{align}
\end{lemma}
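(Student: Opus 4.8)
The plan is to establish all three identities by direct differentiation and integration of the twisted quantity $\Phi(s):=e^{-sL}F(\tilde U(s))$, using two structural facts: $L$ commutes with $e^{sL}$, and $\tilde U(s)=e^{sL}U$ solves $\partial_s\tilde U=L\tilde U$, i.e. $\partial_s\tilde u=\tilde v$ and $\partial_s\tilde v=\widetilde{\Delta}\tilde u$. For \eqref{dG} I would apply the product rule to $\Phi(s)$, writing $\Phi'(s)=-Le^{-sL}F(\tilde U(s))+e^{-sL}F'(\tilde U(s))\partial_s\tilde U(s)$. Commuting $L$ through $e^{-sL}$ turns the first term into $-e^{-sL}LF(\tilde U)$, and the block forms of $L$ and $F$ give $LF(\tilde U)=(f(\tilde u),0)^T$; for the second term, $\partial_s\tilde U=L\tilde U=(\tilde v,\widetilde{\Delta}\tilde u)^T$ and \eqref{dF} gives $F'(\tilde U)\partial_s\tilde U=(0,f'(\tilde u)\tilde v)^T$. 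Adding the two contributions yields \eqref{dG}.

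For \eqref{ddG} I would start from \eqref{dG}, which gives $e^{2sL}\Phi'(s)=e^{sL}P(s)$ with $P(s)=(-f(\tilde u),\,f'(\tilde u)\tilde v)^T$, and differentiate: $\tfrac{d}{ds}[e^{sL}P(s)]=e^{sL}(LP(s)+P'(s))$. Computing $LP$ from the block form of $L$ and $P'$ via $\partial_s\tilde u=\tilde v$, $\partial_s\tilde v=\widetilde{\Delta}\tilde u$, the first components cancel ($f'(\tilde u)\tilde v$ from $LP$ against $-f'(\tilde u)\tilde v$ from $P'$). The second component collects $-\widetilde{\Delta}f(\tilde u)$, $f''(\tilde u)\tilde v^2$ and $f'(\tilde u)\widetilde{\Delta}\tilde u$; expanding $\widetilde{\Delta}=\Delta-m$ and using the Laplacian chain rule $\Delta f(\tilde u)=f''(\tilde u)|\nabla\tilde u|^2+f'(\tilde u)\Delta\tilde u$ cancels the $f'(\tilde u)\Delta\tilde u$ terms, leaving the quadratic term $f''(\tilde u)(|\tilde v|^2-|\nabla\tilde u|^2)$ together with the lower-order contributions coming from the mass shift $m$, which gives the right-hand side of \eqref{ddG}.

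For \eqref{I1form} I would write $e^{(\tau-s)L}F(\tilde U(s))=e^{\tau L}\Phi(s)$, so that the integral equals $e^{\tau L}\int_0^\tau\Phi(s)\,ds$, and then use an \emph{exact} two-level expansion rather than a Taylor approximation. First $\Phi(s)=\Phi(0)+\int_0^s\Phi'(r)\,dr$ with $\Phi(0)=F(U)$; next, from $\tfrac{d}{d\sigma}[e^{2\sigma L}\Phi'(\sigma)]$ one has $e^{2sL}\Phi'(s)=\Phi'(0)+\int_0^s\tfrac{d}{d\sigma}[e^{2\sigma L}\Phi'(\sigma)]\,d\sigma$, where $\Phi'(0)=(-f(u),f'(u)v)^T$ by evaluating \eqref{dG} at $s=0$. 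Substituting gives $\Phi'(s)=e^{-2sL}\Phi'(0)+e^{-2sL}G(s)$ with $G(s)=\int_0^s\tfrac{d}{d\sigma}[e^{2\sigma L}\Phi'(\sigma)]\,d\sigma$. Integrating in $s$ and evaluating the operator integral $\int_0^\tau e^{-2sL}\,ds=(2L)^{-1}(I-e^{-2\tau L})$, the leading two terms reproduce exactly $\tau e^{\tau L}F(U)$ and $(2L)^{-1}[\tau e^{\tau L}-(2L)^{-1}(e^{\tau L}-e^{-\tau L})](-f(u),f'(u)v)^T$. The remaining term is a triple integral that I collapse to the stated double integral by Fubini: on the region $0\le r\le s\le\tau$, integrating out the $s$-variable produces precisely the weight $(\tau-s)$ after renaming, together with the factor $e^{(\tau-2s)L}$.

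The hard part will be the algebra in \eqref{ddG}, where the Laplacian chain rule and the bookkeeping of the mass-shift terms from $\widetilde{\Delta}=\Delta-m$ must be tracked carefully so that exactly the $f'$-linear contributions cancel; and in \eqref{I1form}, correctly identifying the twisted expansion, evaluating the operator exponential integrals with the $(2L)^{-1}$ factors, and performing the Fubini rearrangement that turns the triple integral into the double integral with weight $(\tau-s)$ — this is where sign or factor errors are most likely.
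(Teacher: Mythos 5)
Your overall strategy --- differentiating the twisted quantity $\Phi(s)=e^{-sL}F(\tilde U(s))$ with the product and chain rules, using $\partial_s\tilde u=\tilde v$ and $\partial_s\tilde v=\widetilde{\Delta}\tilde u$, and obtaining \eqref{I1form} from two exact expansions with integral remainder followed by the operator integral $\int_0^\tau e^{-2sL}\,ds=(2L)^{-1}(I-e^{-2\tau L})$ and a Fubini rearrangement --- is exactly the argument the paper itself invokes (it defers the details to \cite{LSZ} and names precisely these ingredients: chain rule, Fubini, integration by parts). Your treatment of \eqref{dG} is correct, and your derivation of \eqref{I1form} is correct and complete in outline: the identification $\Phi'(0)=(-f(u),f'(u)v)^T$, the value of the iterated operator integral, and the appearance of the weight $(\tau-s)$ after integrating out the outer variable all check out.

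The one genuine gap is in \eqref{ddG}, and it sits exactly where you wrote that the mass-shift contributions ``give the right-hand side of \eqref{ddG}'' without computing them. Carrying out the bookkeeping: the second component of $LP+P'$ is $-\widetilde{\Delta}f(\tilde u)+f''(\tilde u)|\tilde v|^2+f'(\tilde u)\widetilde{\Delta}\tilde u$, and after expanding $\widetilde{\Delta}=\Delta-m$ and using $\Delta f(\tilde u)=f''(\tilde u)|\nabla\tilde u|^2+f'(\tilde u)\Delta\tilde u$, the zeroth-order remainder is $m\bigl(f(\tilde u)-f'(\tilde u)\tilde u\bigr)$, not the $m\bigl(\tilde u-f(\tilde u)\bigr)$ printed in \eqref{ddG}. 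A sanity check with $f\equiv c$ constant: then $e^{2sL}\tfrac{d}{ds}\bigl(e^{-sL}F\bigr)=e^{sL}(-c,0)^T$, whose derivative is $e^{sL}(0,-\widetilde{\Delta}c)^T=e^{sL}(0,mc)^T$, agreeing with $m(f-f'\tilde u)=mc$ but not with $m(\tilde u-c)$. So the printed right-hand side of \eqref{ddG} appears to contain a typo (the correct term $m(f(\tilde u)-f'(\tilde u)\tilde u)=m(g(\tilde u)-g'(\tilde u)\tilde u)$ obeys the same bound used later in \eqref{0_norm_R4}, so nothing downstream breaks), and your assertion that the computation reproduces \eqref{ddG} as stated is, taken literally, false. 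This is precisely the step you flagged as ``the hard part'' and then did not perform; you must either write it out and record the corrected right-hand side, or explain how to reconcile it with the printed formula.
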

Equation \eqref{ddG} is the cancellation structure in the semilinear wave equation discovered by \cite{LSZ}. This cancellation structure ensures that all spatial second-order derivatives on the right-hand side of \eqref{ddG} are canceled out, allowing us to construct suitable numerical approximations for the nonlinear term using Equation \eqref{I1form}, while avoiding higher-order derivative terms in the error analysis. The proof of the above lemma involves the application of the chain rule, the Fubini theorem, and integration by parts. For detailed proofs and further information, refer to \cite[(2.23) and (2.26)]{LSZ}. 
Furthermore, the following result, which can be found in \cite[(2.44)]{LSZ} will be useful later,
\begin{align}\label{inequdF}
\Big\| \dfrac{d}{d \sigma} e^{- \sigma L} \left[F^\prime (\tilde U(\sigma)) e^{\sigma L} W\right]\Big\|_1
\lesssim \| W\|_{\frac32+ \epsilon} \| \tilde U(\sigma)\|_1 
+ \| W\|_1.
\end{align}

\section{Construction of the numerical scheme}\label{sec:scheme}
In this section, we demonstrate the construction of the high frequency recovered low-regularity integrator in \eqref{reformulated_scheme}. 
We start with the variation-of-constants formula, i.e., 
\begin{align}\label{duhamel}
U(t_{n+1})
& =e^{\tau L} U\left(t_n\right)+\int_0^\tau e^{(\tau-s) L} F\left(U\left(t_n+s\right)\right) d s 
\end{align}
and employ the following high- and low-frequency decomposition: 
\begin{align}\label{1}
U\left(t_{n+1}\right)& =e^{\tau L} U\left(t_n\right)+\int_0^\tau e^{(\tau-s) L} \Pi_N F\left(\Pi_N U\left(t_n+s\right)\right) d s+ R_1(t_n),
\end{align}
where the remainder $R_1(t_n)$ is given by
\begin{align}\label{defR1}
R_1(t_n)=
\int_0^\tau e^{(\tau-s) L}
\left[F\left(U\left(t_n+s\right)\right)- \Pi_N F\left(\Pi_N U\left(t_n+s\right)\right)\right] d s.
\end{align}
Then we approximate the low-frequency term. 

Since $ \Pi_N$ commutes with $L$, 
by using the Taylor expansion of $F(U)$ at $U = e^{sL} \Pi_N U(t_n)$, we have
\begin{align*}
F( \Pi_N U(t_n + s)) &= F(e^{sL} \Pi_N U(t_n)) + F^\prime(e^{sL} \Pi_N U(t_n))
	\int_0^s e^{(s- \sigma) L} \Pi_N F(\Pi_N U(t_n+ \sigma)) d \sigma \\
 &\quad+ \tilde R_2(s)
 + \tilde R_3(s),
\end{align*}
 where
 \begin{align}\label{def-tilde-R2}
 \tilde R_2(s)&= F^\prime(e^{sL} \Pi_N U(t_n))
	\int_0^s e^{(s- \sigma) L} \Pi_N
 \Big[ F( U(t_n+ \sigma)-F(\Pi_N U(t_n+ \sigma)) \Big]d \sigma,\\
 \label{def-tilde-R3}
\tilde R_3(s) &= R_F(s) \int_0^ s e^{(s- \sigma)L} \Pi_{N} F(U(t_n + \sigma)) d \sigma
	\cdot \int_0^s e^{(s- \sigma)L} \Pi_N F(U(t_n + \sigma)) d \sigma,\\
R_F(s) &= \int_0^1 \int_0^1 \theta F^{\prime \prime} \left[ (1- \sigma) e^{sL} \Pi_N U(t_n) + \sigma (1- \theta)
	e^{sL} \Pi_N U(t_n) + \theta \sigma \Pi_N U(t_n+ s)\right] d \sigma d \theta. \notag
\end{align}
Inserting the above results to  \eqref{1}, we obtain
\begin{align*}
U(t_{n+1}) = e^{ \tau L} U(t_n) + I_1(t_n) + I_2(t_n) + R_1(t_n)+R_2(t_n)+R_3(t_n),
\end{align*}
where
\begin{align}
I_1(t_n) &= \int_0^ \tau e^{ (\tau-s) L} \Pi_N F(\Pi_N \tilde U(t_n+s)) d s,\quad
\tilde U(t_n+s) = e^{sL} U(t_n),\label{I1_tildeU}\\\notag
I_2(t_n) &= \int_0 ^ \tau e^{(\tau-s)L} \Pi_N \left[ F^\prime(\Pi_N \tilde U(t_n+s))
				\int_0^s e^{(s- \sigma)L} \Pi_N F( \Pi_N U(t_n+ \sigma)) d \sigma\right] ds,\\
R_2(t_n) &= \int_0^ \tau e^{( \tau - s )L} \Pi_N \tilde R_2(s) ds,
\quad R_3(t_n) = \int_0^ \tau e^{( \tau - s )L} \Pi_N \tilde R_3(s) ds.\label{R2R3}
\end{align}
The approximation of $I_1(t_n)$ combines the filtering technique and Lemma \ref{mainlem}.
Using \eqref{I1form} with $U = \Pi_N U(t_n)$ and interchanging $ \Pi_N$ and $L$ in order, we obtain
\begin{align*}
I_1(t_n) 
&= \tau e^{ \tau L} \Pi_N F( \Pi_N U(t_n)) \\
&\quad+ (2L)^{-1} \left[
\tau e^{ \tau L} - (2L)^{-1} (e^{ \tau L} - e^{- \tau L} )\right]
\begin{pmatrix}
- \Pi_N f( \Pi_N u(t_n))\\
 \Pi_N(f^\prime( \Pi_Nu(t_n)) \Pi_N v(t_n))
\end{pmatrix}
+ R_4(t_n),
\end{align*}
where
\begin{align}\label{R3*}
R_4(t_n) = \int_0^ \tau e^{ (\tau - 2 s)L} ( \tau - s)\int_0^ s \dfrac{d}{d \sigma} \left[ e^{2 \sigma L} 
\dfrac{d}{d \sigma} e^{- \sigma L} \Pi_N F(\Pi_N \tilde U(t_n+  \sigma))
\right] d \sigma ds.
\end{align}

For $I_2(t_n)$, by approximating $\Pi_N U(t_n+\sigma)$ with $\Pi_N e^{ \sigma L} U(t_n)$, we obtain
\begin{align}\label{i1}
I_2(t_n) 
&= \int_0^ \tau e^{( \tau- s) L} \Pi_N \left[ F^\prime(\Pi_N \tilde U(t_n+s))
	\int_0^ s e^{(s- \sigma) L} \Pi_N F(\Pi_N U(t_n+ \sigma)) d \sigma\right] ds\\
	\label{i2}
&= \int_0^ \tau s e^{\tau L} \Pi_N \left[ F^\prime(\Pi_N U(t_n)) 
\Pi_N F(\Pi_N U(t_n))\right] ds
	+ R_{5}(t_n) + R_{6}(t_n) + R_{7}(t_n),
\end{align}
where the first term is equal to 0 due to \eqref{dF} and the remaining terms are as follows
\begin{align*}
R_{5}(t_n) &= \int_0^ \tau e^{( \tau-s)L} \Pi_N \Big[ F^\prime (\Pi_N \tilde U(t_n+s))\\
&\hspace{2cm}\cdot \int_0^ s e^{(s- \sigma)L} \Pi_N \big[ F(\Pi_N U(t_n+ \sigma)) 
- F( \Pi_N \tilde U(t_n+ \sigma))\big] d \sigma\Big] ds,\\
R_{6}(t_n) &= \int_0^ \tau e^{( \tau-s)L} \Pi_N \Big[ F^\prime (\Pi_N \tilde U(t_n+s))\\
&\hspace{2cm}	\cdot \int_0^ s
	\left(
		e^{(s- \sigma)L} \Pi_N F( \Pi_N \tilde U(t_n+ \sigma)) - e^{sL} \Pi_N F(\Pi_N U(t_n))
	\right)
	d \sigma \Big] ds,\\
R_{7}(t_n) &= \int_0^ \tau s e^{\tau L} \Pi_N
	\Big(e^{-s L}[F^\prime(\Pi_N \tilde U(t_n+s))
		\cdot e^{sL} \Pi_N F( \Pi_N U(t_n))]\\
&\hspace{2cm}- F^\prime( \Pi_N U(t_n)) \Pi_N F( \Pi_N U(t_n))\Big)ds.
\end{align*}
Finally, replacing $ \Pi_N F$ by $ I_N F$, we obtain
\begin{align}\label{un}
    U(t_{n+1}) &= e^{ \tau L} U(t_n) + \tau e^{ \tau L} I_N F( \Pi_N U(t_n)) \notag\\
&\quad+ (2L)^{-1} \left[
\tau e^{ \tau L} - (2L)^{-1} (e^{ \tau L} - e^{- \tau L} )\right]
H( \Pi_N U(t_n))
+ \mathcal{L}^n,
\end{align}
where $\mathcal{L}^n$ is the consistency error and is given by
\begin{align}
\label{defLn}
\mathcal{L}^n &= \sum_{i=1}^9 R_i(t_n),\\
\label{defR8}
R_8(t_n)&= \tau e^{ \tau L} ( \Pi_N - I_N) F( \Pi_N U(t_n)),\\
\label{defR9}
R_9(t_n)&= (2L)^{-1} \Big[
	\tau e^{ \tau L} - (2L)^{-1} (e^{ \tau L} - e^{- \tau L} )\Big]
   	\begin{pmatrix}
	- (\Pi_N-I_N) f( \Pi_N u(t_n))\\
	 \Pi_N((I-I_N)f^\prime( \Pi_Nu(t_n)) \Pi_N v(t_n))
	\end{pmatrix}.
\end{align}
By dropping the remainder $\mathcal{L}^n$ and taking initial value $U_{N}^{0}=\Pi_{N^{\alpha}}U(0)$, we obtain the following numerical scheme:
\begin{align}\label{modified_scheme_1}
   U^{n+1}_{N} &= e^{ \tau L} U^n_{N} + \tau e^{ \tau L} I_N F( \Pi_N U^n_{N}) + (2L)^{-1} \left[\tau e^{ \tau L} - (2L)^{-1} (e^{ \tau L} - e^{- \tau L} )H(\Pi_N U^n_{N})\right].
\end{align}
We recall the expression of $H(\Pi_N U^n_{N})$ in \eqref{defH} and apply the equality $\Pi_N I_N=I_N$. This yields the following algorithm for the low- and high-frequency parts of the numerical solution, respectively: 
\begin{align} 
\label{low_frequency}
\Pi_N U^{n+1}_{N}=\, &e^{ \tau L} \Pi_N U^n_{N}+ \tau e^{ \tau L} I_N F( \Pi_N U^n_{N})\notag \\
&\quad+ (2L)^{-1} \left[
\tau e^{ \tau L} - (2L)^{-1} (e^{ \tau L} - e^{- \tau L} )
H(\Pi_N U^n_{N})
\right], \\[5pt]
\label{high_frequency}
\Pi_{(N,N^{\alpha}]} U^{n+1}_{N}=\, &e^{ \tau L} \Pi_{(N,N^{\alpha}]} U^n_{N}.
\end{align}
This is algorithm \eqref{reformulated_scheme}, where \eqref{HR_LRI_3} is obtained by iterating \eqref{high_frequency} with respect to $n$. 

The expressions of the remainders $R_j(t_n)$, $j=1,\dots,9$, in this section are used in the error analysis in the next section.

\section{Proof of Theorem \ref{thm:convergence}  }\label{section:proof}

Let $E_N^n= U\left(t_n\right)-U_N^n$ denote the error of the numerical solution and consider the difference between \eqref{un} and \eqref{modified_scheme_1}. This yields the following error equation:
\begin{align}\label{diff}
E_N^{n+1}= &\ {e}^{\tau L} E_N^n+\tau{e}^{\tau L} I_N\left(F\left(\Pi_N U\left(t_n\right)\right)-F\left(\Pi_N U_N^n\right)\right) \notag\\
& +(2 L)^{-1}\left[\tau {e}^{\tau L}-(2 L)^{-1}\left({e}^{\tau L}- {e}^{-\tau L}\right)\right] 
\left(H (\Pi_NU(t_n))-H(\Pi_N U_N^n))\right.
+ \mathcal{L}^n,
\end{align}
with $E^0_N=(I-\Pi_{N^{\alpha}})U^{0}$. The remainder $\mathcal{L}^n$ in the expression above is estimated below.  

\begin{proposition}\label{lem:error}
Under the conditions of Theorem  \ref{thm:convergence}, the remainder $\mathcal{L}^n$ in \eqref{diff} satisfies the following estimates: 
\begin{align}\label{Ln0}
\big\|\mathcal{L}^n\big\|_{0}\lesssim \left\{ \begin{array}{ll}
\tau N^{- \gamma+} + \tau^2 N^{1-2 \gamma+} \quad &\text{for} \quad \gamma\in(0,\frac12] \\[2mm]
\tau N^{- \gamma+}+ \tau^2 N^{1-2 \gamma+}+\min(\tau^{2}, \tau^3 N^{2(1- \gamma)+}) \quad &\text{for}\quad\gamma\in(\frac12,1] 
\end{array}
\right.
\end{align} 
and
\begin{align}
\label{Ln_gamma}
&\big\|\mathcal{L}^n\big\|_{\gamma}\lesssim 
	\tau N^{- \gamma+}+  \tau^2 N^{1- \gamma+} ,
\end{align}
where the constant is independent of $\tau$.
\end{proposition}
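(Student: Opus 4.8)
The plan is to estimate $\mathcal{L}^n=\sum_{i=1}^{9}R_i(t_n)$ from \eqref{defLn} term by term and to add the nine bounds, all constants being uniform in $n$ since $\sup_{t}\|U(t)\|_\gamma<\infty$ under the regularity hypothesis. Three reductions are used throughout. First, by \eqref{propL} and interpolation the group $e^{tL}$ is bounded on every $\|\cdot\|_a$, $a\in[0,1]$; moreover $L^{-1}$ and the kernel $(2L)^{-1}[\tau e^{\tau L}-(2L)^{-1}(e^{\tau L}-e^{-\tau L})]$ are bounded on each $\|\cdot\|_a$, the latter with operator norm $O(\tau^2)$ (its symbol is $O(\tau^2)$ uniformly in the frequency). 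Thus every outer propagator may be dropped up to a constant and each appearance of this kernel supplies two powers of $\tau$. Second, since $F(U)=(0,f(u))^{T}$, $F'(U)W=(0,f'(u)w_1)^{T}$ and $F''(U)W\cdot W^{*}=(0,f''(u)w_1w_1^{*})^{T}$ by \eqref{dF}, every $\|\cdot\|_0$ (resp.\ $\|\cdot\|_\gamma$) estimate reduces to a scalar estimate of the lower component in $H^{-1}$ (resp.\ $H^{\gamma-1}$). Third, each time integral $\int_0^\tau$ or $\int_0^s$ contributes one power of $\tau$, so $R_1,R_8$ are $O(\tau)$, $R_2,R_9$ are $O(\tau^2)$, and $R_3,R_4,R_5,R_6,R_7$ are $O(\tau^3)$.

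The first-order remainders $R_1$ and $R_8$ both produce the term $\tau N^{-\gamma+}$. For $R_1$ in \eqref{defR1} I would split the integrand as $F(U)-\Pi_NF(\Pi_NU)=(I-\Pi_N)F(U)+\Pi_N\big(F(U)-F(\Pi_NU)\big)$; Bernstein's inequality (Lemma \ref{bound}) with $\|f(u)\|_{H^{\gamma-1}}\lesssim\|f(u)\|_{L^2}\lesssim1$ bounds the first piece by $N^{-\gamma}$, while writing $f(u)-f(\Pi_Nu)=\Pi_{>N}u\int_0^1 f'(\Pi_Nu+\theta\Pi_{>N}u)\,d\theta$ and applying the product estimate \eqref{embedding3} with $\|\Pi_{>N}u\|_{H^{\gamma-1}}\lesssim N^{-1}$ bounds the second piece by $N^{-\gamma+}$. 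For $R_8$ in \eqref{defR8} I would use $\Pi_N-I_N=\Pi_N(I-I_N)$, the one-dimensional interpolation bound of Lemma \ref{lemInterp_2} (and Lemma \ref{lemInterp} in two dimensions) applied to $f(\Pi_Nu)$, and $\|\nabla\Pi_Nu\|_{L^2}\lesssim N^{1-\gamma}$, again giving $N^{-\gamma+}$.

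The dominant second-order term $\tau^2N^{1-2\gamma+}$ arises from the interpolation remainder $R_9$ in \eqref{defR9}: its lower component $\Pi_N\big((I-I_N)f'(\Pi_Nu)\cdot\Pi_Nv\big)$ is estimated by \eqref{embedding2} as $\|\Pi_Nv\|_{L^2}\,\|(I-I_N)f'(\Pi_Nu)\|_{L^{2+}}\lesssim N^{1-\gamma}\cdot N^{-\gamma+}=N^{1-2\gamma+}$, where Lemma \ref{lemInterp_2} is applied to $f'(\Pi_Nu)$; the $O(\tau^2)$ kernel then yields $\tau^2N^{1-2\gamma+}$. The remainder $R_2$ from \eqref{R2R3} combines the composition/projection estimate of $R_1$ with a gain of one derivative in the lower component and is $O(\tau^2N^{-\gamma})$, hence subsumed. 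For $R_4$ in \eqref{R3*} I would apply the cancellation identity \eqref{ddG} of Lemma \ref{mainlem} (with $u,v$ replaced by $\Pi_Nu,\Pi_Nv$), which removes the second spatial derivatives and reduces $R_4$ to a double time integral of $\Pi_N\big[f''(\Pi_N\tilde u)(|\Pi_N\tilde v|^2-|\nabla\Pi_N\tilde u|^2)+m(\Pi_N\tilde u-f(\Pi_N\tilde u))\big]$ in $H^{-1}$; bounding the quadratic term by $N^{2(1-\gamma)+}$ via \eqref{embedding3} gives $\tau^3N^{2(1-\gamma)+}$, while reverting to the first-derivative form \eqref{dG} (whose lower component $f'(\Pi_Nu)\Pi_Nv$ is $O(1)$ for $\gamma>\tfrac12$ by Lemma \ref{lem:H_estimate}) gives the crude bound $\tau^2$; together they give $\min(\tau^2,\tau^3N^{2(1-\gamma)+})$. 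The cubic remainders $R_3$ in \eqref{R2R3} and $R_5,R_6,R_7$ have vanishing leading term because $F'(U)F(U)=0$ by \eqref{dF}, with $R_7$ controlled through \eqref{inequdF}; all are dominated by the terms above. Summing over $i$ yields \eqref{Ln0}.

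The stronger bound \eqref{Ln_gamma} follows by rerunning the same analysis in $\|\cdot\|_\gamma$, i.e.\ measuring lower components in $H^{\gamma-1}$: the first-order contributions stay $\tau N^{-\gamma+}$ (for $\gamma>\tfrac12$ one treats $(I-\Pi_N)F(U)$ directly using $f(u)\in H^\gamma$ via $H^\gamma\hookrightarrow L^\infty$), while for the band-limited remainder $R_9$ the passage from $H^{-1}$ to $H^{\gamma-1}$ costs a factor $N^{\gamma}$, turning $N^{1-2\gamma+}$ into $N^{1-\gamma+}$ and producing $\tau^2N^{1-\gamma+}$. I expect the main obstacle to be the sharp low-regularity negative-norm estimates, above all the $N^{1-2\gamma+}$ bound for the interpolation remainder in $R_9$ and the $N^{2(1-\gamma)+}$ bound for the quadratic cancellation term in $R_4$: a naive placement of both rough factors (two copies of $\Pi_Nv$, or $\Pi_Nv$ against $\nabla\Pi_Nu$) in $L^2$ overcounts the frequency growth, so the dyadic (Littlewood--Paley) decomposition behind Lemma \ref{lem:H_estimate} and the refined product estimates of Lemma \ref{Lemma:negative-norm} are indispensable. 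A secondary difficulty is the use of the interpolation operator $I_N$ in place of $\Pi_N$ at regularity below $H^{d/2}$, which is exactly what forces the separate $L^p$ interpolation bound of Lemma \ref{lemInterp_2}.
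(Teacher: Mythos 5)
Your proposal follows the same route as the paper: a term-by-term estimate of $R_1,\dots,R_9$, with $R_1,R_8$ supplying $\tau N^{-\gamma+}$, $R_9$ supplying $\tau^2 N^{1-2\gamma+}$ via the negative-norm product estimate \eqref{embedding2} combined with the $L^p$ interpolation bound, $R_4$ treated through the cancellation identity \eqref{ddG} together with a cruder bound from the first-derivative form, and $R_5+R_6+R_7=I_2$ killed at leading order by $F'(U)F(U)=0$ with \eqref{inequdF} handling $R_7$. The key lemmas you invoke (Bernstein, Lemmas \ref{lemInterp}--\ref{lem:H_estimate}) are exactly the ones the paper uses, and your identification of the dominant contributions matches the paper's.

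There is, however, one concrete gap: your bound for $R_4$ in the regime $\gamma\in(0,\frac12]$. You claim $R_4\lesssim\min(\tau^2,\tau^3N^{2(1-\gamma)+})$, but the crude $\tau^2$ in that minimum rests on $\|f'(\Pi_N u)\Pi_N v\|_{H^{-1}}=O(1)$, which Lemma \ref{lem:H_estimate} gives only for $\gamma>\frac12$; for $\gamma\le\frac12$ the lemma gives $O(N^{1-2\gamma+})$, so the first-derivative form yields $\tau^2N^{1-2\gamma+}$ rather than $\tau^2$. This matters because the remaining bound $\tau^3N^{2(1-\gamma)+}$ alone is \emph{not} dominated by the target $\tau N^{-\gamma+}+\tau^2N^{1-2\gamma+}$ over the full admissible range $N\lesssim\tau^{-\frac{1}{1-\gamma}+\epsilon_0}$: for instance $\gamma=\frac12$ and $N\sim\tau^{-2}$ give $\tau^3N^{1+}\sim\tau^{1-}$ against a target of size $\tau^{2-}$. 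The repair is exactly the paper's second representation: write $R_4=\int_0^\tau e^{(\tau-2s)L}(\tau-s)\bigl[e^{2sL}G'(t_n+s)-G'(t_n)\bigr]\,ds$ with $G'$ as in \eqref{dGs} and apply the low-regularity branch (i) of Lemma \ref{lem:H_estimate} to its lower component, giving $R_4\lesssim\tau^2N^{1-2\gamma+}$ for $\gamma\le\frac12$; you already cite every ingredient needed, you simply did not apply that branch to $R_4$. A second, minor, slip: the cancellation $F'(U)F(U)=0$ is what removes the leading term of $I_2$, not of $R_3$; the term $R_3$ defined through \eqref{def-tilde-R3} is the quadratic Taylor remainder and is $O(\tau^3)$ simply because $\|R_F(s)W\cdot W^*\|_1\lesssim\|W\|_1\|W^*\|_1$ by \eqref{dF} and each inner integral contributes a factor $\tau$ — no cancellation is needed there.
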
  
\begin{proof}
In view of the definition $\mathcal{L}^n = \sum_{i=1}^9 R_i(t_n)$, we present estimates for $R_j(t_n)$, $j=1,\dots,9$, respectively. 

{\sc Estimate of $R_1(t_n)$.} We decompose $R_1(t_n)$ into the following parts: 
\begin{align}
R_1(t_n)=
\label{eq:1}
& \int_0^\tau e^{(\tau-s) L} {\Pi_{>N}} F (\Pi_N U\left(t_n+s\right) ) d s \\
\label{eq:2}
& +\int_0^\tau e^{(\tau-s) L}
\left[F\left(U\left(t_n+s\right)\right)-F\left(\Pi_N U\left(t_n+s\right)\right)\right] d s.
\end{align}
Lemma \ref{bound} (Bernstein's inequality) can be used to show that 
\begin{align*}
\| \eqref{eq:1}\|_{1} \lesssim \tau N^{- \gamma}
\max_{s\in[0,\tau]}\| F( \Pi_N U(t_n+ s))\|_{1+ \gamma}\lesssim \tau N^{- \gamma}  \max_{s\in[0,\tau]} \| U(t_n+s)\|_{ \gamma}.
\end{align*}
By the mean-value theorem, we can rewrite \eqref{eq:2} into the following expression:  
\begin{align*}
\eqref{eq:2} = \int_0^\tau e^{(\tau- s)L}
 		\int_0^1 \left[ F^\prime \Big( \theta U(t_n+ s) + (1- \theta) \Pi_N U(t_n+ s)\Big) \cdot
		\Pi_{>N} U(t_n+ s)\right] d \theta d s.
\end{align*}
Then, according to \eqref{dF} and Lemma~\ref{bound}, we obtain
\begin{align*}
\| \eqref{eq:2}\|_{1}&\lesssim \tau\max_{s\in[0,\tau], \theta\in [0,1]} \| F^\prime \left( \theta U(t_n+ s) + (1- \theta) \Pi_N U(t_n + s)\right)\cdot \Pi_{>N} U(t_n+ s)\|_1\\
	&=  \tau \max_{s\in[0,\tau], \theta\in [0,1]}\| f^\prime ( \theta u(t_n+ s) + (1- \theta) \Pi_N u(t_n+ s)) \cdot \Pi_{>N} u(t_n+ s)\|_{L^2}\\
&\lesssim  \tau \max_{s\in[0,\tau]}\| \Pi_{>N} u(t_n+ s)\|_{L^2}
\lesssim \tau N^{- \gamma} \max_{s\in[0,\tau]}\| U(t_n+s)\|_ \gamma.
\end{align*}
Therefore, by collecting the estimates  \eqref{eq:1} and \eqref{eq:2}, we have the following estimate of $\|R_1(t_n)\|_1$: 
\begin{align}\label{est:R1}
\|R_1(t_n)\|_1\lesssim \tau N^{- \gamma} \max_{s\in[0,\tau]}\| U(t_n+s)\|_ \gamma.
\end{align}

{\sc Estimate of $R_2(t_n)$:}  
From \eqref{dF} we see that $\|F^\prime(U)W\|_1 \lesssim \|W\|_0$. By using this result, from the definition of $R_2(t_n)$ in \eqref{R2R3} and the definition of $\tilde R_2(s)$ in \eqref{def-tilde-R2} we derive that 
\begin{align}\label{est:R2}
\|R_2(t_n)\|_1&\lesssim \tau \max_{s\in[0,\tau]}\|\tilde R_2(s)\|_1 
\lesssim \tau^2\max_{\sigma\in[0,\tau]}\big\|F( U(t_n+ \sigma)-F(\Pi_N U(t_n+ \sigma))\big\|_0 \notag\\
& \lesssim \tau^2 \max_{\sigma\in[0,\tau]}\|f(u(t_n+\sigma))-f(\Pi_N u(t_n+ \sigma))\|_{H^{-1}} \notag\\
& \lesssim \tau^2 \max_{\sigma\in[0,\tau]}\|f(u(t_n+\sigma))-f(\Pi_N u(t_n+ \sigma))\|_{L^2} \notag\\
&\lesssim \tau^2 N^{-\gamma} \max_{s\in[0,\tau]}\| U(t_n+s)\|_ \gamma , 
\end{align}
where the last inequality follows from the Lipschitz continuity of $f$ and Lemma \ref{bound}. 

{\sc Estimate of $R_3(t_n)$:}  
Using the definition of $R_F(s)$ below \eqref{def-tilde-R3}, and the expression of $F''(U)$ in \eqref{dF}, we derive  that 
\begin{align*}
\| R_F(s) W \cdot W^*\|_1 \lesssim \|f^{\prime\prime} \|_{L^\infty} \|w_1w_1^*\|_{L^2}
\lesssim \| W\|_{1} \| W^*\|_1.
\end{align*}
Then, in view of the definition of $R_3(t_n)$ in \eqref{R2R3}, we have
\begin{align}\label{est:R3}
 \| R_3(t_n)\|_1 
& \lesssim\tau \max_{s\in[0,\tau]} \Big| \int_0^s  \| \Pi_N F(U(t_n+ \sigma))\|_1 d \sigma\Big|^2
\lesssim \tau^3 ,
\end{align}
where the last inequality follows from that 
$$ \| F(U(t_n+ \sigma))\|_1 = \| f(u(t_n+ \sigma))\|_0 \lesssim \|f(0)\|_0 + \|f'\|_{L^\infty} \|u(t_n+ \sigma)\|_{L^2} . $$

{\sc Estimate of $R_4(t_n)$:} 
We rewrite $R_4(t_n)$ as 
\begin{align}\label{rewrite_R4}
R_4(t_n) = \int_0^ \tau e^{ (\tau - 2 s)L} ( \tau - s)\int_0^ s \dfrac{d}{d \sigma} \left[ e^{2 \sigma L} G^\prime(t_n+ \sigma)\right] d \sigma ds 
\end{align}
with
$$
G(t_n+ s) = e^{- s L} \Pi_N F(\Pi_N \tilde U(t_n+ s)) . 
$$ 
By applying Lemma~\ref{mainlem} with $U = \Pi_N U(t_n)$ and taking advantage of the commutative property bewteen $ \Pi_N$ and $L$, we obtain 
\begin{align}\label{dGs}
&G^\prime (t_n+ s)
= e^{- s L} \begin{pmatrix} - \Pi_N f( \Pi_N \tilde u(t_n + s)) \\
 \Pi_N \left( f^\prime( \Pi_N\tilde u(t_n+ s)) \Pi_N \tilde v(t_n+ s)\right) \end{pmatrix},\\[2mm]
\label{ddGs}
&\frac{d}{ds} \left[e^{2sL}G^\prime (t_n +s)\right]  
=e^{sL} \begin{pmatrix} 0\\
\Pi_N \mathcal{F}(\Pi_N U)(t_n+s)
\end{pmatrix},
\end{align}
with 
\begin{align*}
&\mathcal{F}(\Pi_N U)(t_n+s) \\
&=f^{\prime \prime}(\Pi_N \u)
\left( | \Pi_N \v|^2 - |\Pi_N \nabla \u|^2\right)+m (\Pi_N \u-f(\Pi_N \u)).
\end{align*}

On the one hand, we can estimate $\| R_4(t_n)\|_0 $ by using its expression in \eqref{rewrite_R4} and the expression of $\frac{d}{ds} \left[e^{2sL}G^\prime (t_n +s)\right]$ in \eqref{ddGs}. Then, using the negative-norm estimates in \eqref{embedding2} and Bernstein's inequality in Lemma~\ref{bound}, we have 
\begin{align}\label{0_norm_R4}
\| R_4(t_n)\|_0 &\lesssim \tau^{3}\max_{s\in[0,\tau]} \|\mathcal{F}(\Pi_N U)(t_n+s)\|_{H^{-1}} \notag\\
&\lesssim \tau^3 \max_{s\in[0,\tau]}\| f^{\prime\prime}( \Pi_N \u) \left( | \Pi_N \v|^2- |\Pi_N \nabla \u |^2\right)\|_{H^{-1}} \notag\\
&\quad+\tau^{3} \max_{s\in[0,\tau]}\|\Pi_N \u-f(\Pi_N \u)\|_{H^{-1}}\notag\\
&\lesssim \tau^3 \max_{s\in[0,\tau]}\left( \| \Pi_N \v\|_{L^{2+}}^2 + \| \Pi_N \nabla \u\|_{L^{2+}}^2+\|\u\|_{L^{2}}+1\right) \notag\\
&\lesssim \tau^3 N^{2(1- \gamma)+} \| U(t_n)\|_{ \gamma}^2.
\end{align}
On the other hand, we can also estimate $R_4(t_n)$ by using its following reformulated expression:
\begin{align}\label{defR42}
R_4(t_n) = \int_0^ \tau e^{ (\tau - 2 s)L} ( \tau - s) \left[ e^{2 s L} G^\prime(t_n+s)-G^{\prime}(t_n)\right] ds.
\end{align}
In this way, using the definition of $G^{\prime}$ in \eqref{dGs}, we have 
\begin{align*}
    \|R_4(t_n) \|_0 &\lesssim \tau^2 \sup_{s\in[0,\tau]}
    \|G^\prime(t_n+ s)\|_0\\
&\lesssim \tau^2 \sup_{s\in[0,\tau]} 
\big(
 \|\Pi_N f( \Pi_N \tilde u(t_n+s))\|_{L^2}
  +\|\Pi_N \left( f^\prime( \Pi_N\tilde u(t_n+ s)) \Pi_N \tilde v(t_n+ s)\right) \|_{H^{-1}}
\big).
\end{align*}
According to \eqref{propL}, $\tilde u$ is bounded in $L^2$ and therefore the first term on the right-hand side above is $O(\tau^2)$. By employing Lemma \ref{lem:H_estimate}, we can estimate the second term on the right-hand side above as follows: 
\begin{align*}
\tau^2 \|\Pi_N \left( f^\prime( \Pi_N\tilde u(t_n+ s)) \Pi_N \tilde v(t_n+ s)\right) \|_{H^{-1}} 
\lesssim 
\left\{\begin{aligned}
\tau^2 N^{1-2\gamma+}&\| U(t_n)\|_{ \gamma}^2 &&\mbox{for}\,\,\, \gamma\in\mbox{$(0,\frac12]$} \\[1mm] 
\tau^2 &\| U(t_n)\|_{ \gamma}^2  &&\mbox{for}\,\,\, \gamma\in\mbox{$(\frac12,1]$} . 
\end{aligned}\right.
\end{align*}
This, together with \eqref{0_norm_R4}, yields the following estimate of $R_4(t_n)$ in the $0$-norm: 
\begin{align}\label{0_norm_R4_new}
\|R_4(t_n)\|_{0}\lesssim 
\left\{\begin{aligned}
\tau^2 N^{1-2 \gamma+} &\| U(t_n)\|_{ \gamma}^2 &&\mbox{for}\,\,\, \gamma\in\mbox{$(0,\frac12]$} \\[1mm]
\min(\tau^{2}, \tau^3 N^{2(1- \gamma)+})\cdot &\| U(t_n)\|_{ \gamma}^2 &&\mbox{for}\,\,\, \gamma\in\mbox{$(\frac12,1]$}  .
\end{aligned}\right.
\end{align}

By considering the $1$-norm of right-hand side of \eqref{defR42}, and using the expression of $G'(t_n+s)$ in \eqref{dGs}, we obtain the following estimate of $R_4(t_n)$ in the $1$-norm: 
\begin{align}\label{1_norm_R4}
\| R_4(t_n)\|_ 1 &\lesssim \tau^{2} \max_{s\in[0,\tau]}\left(\|\Pi_N f( \Pi_N \tilde u(t_n + s))\|_{H^{1}}+\|\Pi_N \left( f^\prime( \Pi_N\tilde u(t_n+ s)) \Pi_N \tilde v(t_n+ s)\right)\|_{L^{2}}\right)\notag\\
&\lesssim \tau^{2}\max_{s\in[0,\tau]}\Big(\|f( \Pi_N \tilde u(t_n + s))\|_{L^{2}}+\|f^{\prime}( \Pi_N \tilde u(t_n + s))\nabla \Pi_N \tilde u(t_n + s)\|_{L^{2}}\notag\\
&\qquad\qquad\qquad\qquad\qquad\qquad\qquad\qquad+\|f^\prime( \Pi_N\tilde u(t_n+ s)) \Pi_N \tilde v(t_n+ s)\|_{L^{2}}\Big)\notag\\
&\lesssim \tau^{2}N^{1-\gamma}\| U(t_n)\|_{ \gamma}.
\end{align}

{\sc Estimate of $R_5(t_n)$:}
Using \eqref{propL}, \eqref{dF} and \eqref{duhamel}, we obtain
\begin{align}\label{1_norm_R5}
\notag
\| R_{5}(t_n)\|_1 &\lesssim \int_0^ \tau \int_0^ s 
\| F(\Pi_N U(t_n+ \sigma)) - F(\Pi_N \tilde U(t_n+ \sigma))\|_0
d \sigma ds,\\
\notag
&\lesssim \tau^2 \max_{ \sigma\in [0, \tau]} \| U(t_n+ \sigma) - e^{ \sigma L} U(t_n)\|_0 \\
&\lesssim \tau^3 (1+\max_{ \sigma\in [0,\tau]} \| U(t_n+ \sigma)\|_0).
\end{align}
Further, we rewrite $R_{6}(t_n) $ as
\begin{align*}
R_{6}(t_n) = \int_0^ \tau e^{( \tau - s) L} \Pi_N \left[
F^\prime(e^{sL} \Pi_NU(t_n))\cdot
\int_0^ s e^{sL} \int_0^ \sigma 
G'(t_n+ \rho) d \rho d \sigma
\right] ds.
\end{align*}
Taking the 1 norm and using \eqref{propL} and \eqref{dF}, we obtain
\begin{align*}
\| R_{6}(t_n)\|_1 &\lesssim \int_0^ \tau \left\|
F^\prime(e^{sL} \Pi_NU(t_n))\cdot
\int_0^ s e^{sL} \int_0^ \sigma 
G'(t_n+ \rho) d \rho d \sigma
\right\|_1 ds\\
&\lesssim \int_0^ \tau 
\left\|\int_0^ s e^{sL} \int_0^ \sigma 
G'(t_n+ \rho) d \rho d \sigma
\right\|_0 ds\\
&\lesssim \int_0^ \tau \int_0^ s \int_0^ \sigma
	\big\|  G'(t_n+ \rho)
	\big\|_0 d \rho d \sigma ds.
 \end{align*}
Substituting \eqref{dGs} into above, we have
 \begin{align}\label{1_norm_R6}
\| R_{6}(t_n)\|_1 
&\lesssim \tau^3 \left(1 + \|\Pi_N u(t_n)\|_{L^2} + \|  \Pi_N   v(t_n)\|_{L^2}\right)
\lesssim \tau^3 N^{1- \gamma} \left( 1+ \|  U(t_n)\|_ \gamma\right).
\end{align}

 Similar, we rewrite $R_{7}(t_n)$ as
 \begin{align*}
 R_{7}(t_n) 
 = \int_0^ \tau s e^{\tau L} \int_0^ s \dfrac{d}{d \sigma} e^{- \sigma L} \Pi_N \left[
 F^\prime(e^{\sigma L} \Pi_N U(t_n)) e^{ \sigma L} \Pi_N F( \Pi_N U(t_n))
 \right] d \sigma ds.
 \end{align*}
Using \eqref{inequdF} with $U = \Pi_N U(t_n)$ and $W = \Pi_N F( \Pi_N U(t_n))$, from \eqref{propL}, we obtain
\begin{align}\label{deF}\notag
&\Big\| \dfrac{d}{d \sigma} e^{- \sigma L} 
\left[F^\prime(e^{\sigma L} \Pi_N U(t_n)) e^{ \sigma L} \Pi_N F( \Pi_N U(t_n)) \right]\Big\|_1\\
\notag
&\lesssim \|\Pi_N F( \Pi_N U(t_n))\|_{\frac32+ \epsilon} \|  \Pi_N \tilde U(t_n+ \sigma)\|_1 
+ \| \Pi_N F( \Pi_N U(t_n))\|_1\\
\notag&\lesssim \|\Pi_N f( \Pi_N u(t_n))\|_{H^{\frac12+ \epsilon}} \|  \Pi_N  U(t_n)\|_1 
+ \| \Pi_N f( \Pi_N u(t_n))\|_0\\
&\lesssim \| \Pi_N U(t_n)\|_1^2+ \| \Pi_NU(t_n)\|_1+ 1.
\end{align}
Taking 1 norm of $R_{7}(t_n) $ and using \eqref{deF}, we obtain
\begin{align}\label{1_norm_R7}
\| R_{7}(t_n)\|_1 &\lesssim \tau^3 \left(\| \Pi_N U(t_n)\|_1^2 + \| \Pi_N U(t_n)\|_1 + 1 \right)\lesssim  \tau^3 N^{2(1- \gamma)} \| U(t_n)\|_ \gamma^2.
\end{align}
Putting together the estimates  \eqref{1_norm_R5},  \eqref{1_norm_R6} and \eqref{1_norm_R7} yields
\begin{align}\label{es:i2}
    \|I_2(t_n)\|_1\lesssim \tau^3 N^{2(1- \gamma)}.
\end{align}

Next, we present another bound of $ \| I_2(t_n)\|_1$.
To this end, we apply the 1 norm to both sides of \eqref{i1}, and use \eqref{propL} and \eqref{dF} to derive
\begin{align}\label{rewrite_I2}\notag
\|I_2(t_n) \|_1
&\lesssim \tau^2 \max_{s,\sigma\in[0,\tau]}
 \left\| F^\prime(\Pi_N \tilde U(t_n+s))
  e^{(s- \sigma) L} \Pi_N F(\Pi_N U(t_n+ \sigma)) \right\|_1\\
  \notag
&\lesssim \tau^2 \max_{s,\sigma\in[0,\tau]} 
\|f^{\prime}(\Pi_{N}\tilde{u}(t_{n}+s))\|_{L^\infty} 
\|F(\Pi_N U(t_n+ \sigma))\|_{0}\\
&\lesssim \tau^2 (1+ \| U(t_n)\|_0).
\end{align}

Inequalities \eqref{es:i2} and \eqref{rewrite_I2} yields
\begin{align}\label{1_norm_I2}
\|R_{5}(t_n)+R_{6}(t_n)+R_{7}(t_n)\|_1=\|I_2(t_n) \|_1\lesssim \min(\tau^3 N^{2(1- \gamma)}, \tau^2),
\end{align}
for all $0<\gamma\leq 1$.

{\bf Estimate of $R_8$ and $R_9$: } By using Lemma~\ref{bound} and Lemma~\ref{lemInterp}, we obtain
\begin{align}\label{est:R8}
\| R_8\|_1
	&\lesssim \tau \| (\Pi_N-I_N) f( \Pi_N u(t_n))\|_{L^2}
	\lesssim \tau N^{-1-} \| f( \Pi_N u(t_n))\|_{H^{1+}}
	\lesssim \tau N^{- \gamma+}.
\end{align}
For $\|R_9\|_0$, by using \eqref{embedding2}, Lemma~\ref{bound} and Lemma~\ref{lemInterp}, we obtain
\begin{align}\label{0_norm_R9}
\| R_9\|_0 &\lesssim \tau^2 \left( \| (\Pi_N - I_N) f( \Pi_N u(t_n))\|_{L^2} +
	\|(I-I_N)f^\prime( \Pi_N u(t_n))\cdot \Pi_N v(t_n)\|_{H^{-1}}\right)\notag\\
	& \lesssim \tau^2 \left( N^{-1-} \| f( \Pi_N u(t_n))\|_{H^{1+}}
	+ \| (I-I_N)f^\prime( \Pi_N u(t_n))\|_{L^2} \|  \Pi_N v(t_n)\|_{L^{2+}}\right)\notag\\
	&\lesssim \tau^2 N^{- \gamma+} + \tau^2 N^{1-2 \gamma+}.
\end{align}
For $ \| R_9\|_ \gamma$, according to \eqref{embedding}, we derive
\begin{align}\label{R6d2}
 \| R_9\|_ \gamma &\lesssim \tau^2 \left(
 	\| ( \Pi_N - I_N) f( \Pi_N u(t_n))\|_{H^ \gamma} + 
	\|(I-I_N)f^\prime( \Pi_N u(t_n)) \Pi_n v(t_n)\|_{H^{ \gamma-1}}
 	\right)\notag\\
	&\lesssim \tau^2 N^{-1+\gamma-} \| f( \Pi_N u(t_n))\|_{H^{1+}}+ \tau^2 \| (I-I_N)f^\prime( \Pi_N u(t_n))\|_{L^2}
		\| \Pi_N v(t_n)\|_{H^{ \gamma+}}\notag\\			
	&\lesssim \tau^2 N^{0+}+ \tau^2 N^{1 - \gamma+} \| v(t_n)\|_{H^ {\gamma-1}}.
\end{align}
By summing up \eqref{est:R1}-\eqref{est:R3},
\eqref{0_norm_R4_new}, \eqref{1_norm_I2}--\eqref{0_norm_R9}, 
we get the estimate \eqref{Ln0} in the proposition~\ref{lem:error}. 
And combining \eqref{est:R1}-\eqref{est:R3}, \eqref{1_norm_R4}, \eqref{1_norm_I2}--\eqref{est:R8} and \eqref{R6d2} we finish the proof of \eqref{Ln_gamma} in the proposition~\ref{lem:error}.
\end{proof}

\begin{lemma}\label{lem1}
Let $0<\gamma \leq1$.
There exists $ \tau_0\in (0,1)$ such that for $ \tau\in (0, \tau_0)$,
$N\le \tau^{-\frac{1}{1- \gamma}+\epsilon_{0}}$, for some $0<\epsilon_{0}\ll 1$, we have the following estimate
$$
 \big\|U^n_N\big\|_{\gamma}\le C,
$$
 where the constant  $C$ depends only on  $\|U(t_n)\|_{\gamma}$, $\|f^{\prime}\|_{L^{\infty}}$ and $\|f^{\prime\prime}\|_{L^{\infty}}$.
\end{lemma}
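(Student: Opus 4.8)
\emph{Approach.} The statement is an a priori bound on the numerical solution in the $\gamma$-norm, so I would argue by a discrete Gronwall/bootstrap estimate. The key decision is to estimate $\|U^n_N\|_\gamma$ \emph{directly} from the scheme \eqref{modified_scheme_1}, rather than through the error equation \eqref{diff}. The reason is that the nonlinear \emph{difference} $f(\Pi_N u(t_n))-f(\Pi_N u^n_N)$, measured in $H^{\gamma-1}$, unavoidably produces a multiplication by $f'$ in a strong norm and hence a factor $N^{1-\gamma}$ carrying only a single power of $\tau$; summed over $T/\tau$ steps this yields an exponential $e^{CTN^{1-\gamma}}$ that blows up. The \emph{absolute} nonlinear term $I_NF(\Pi_N U^n_N)$, by contrast, can be controlled with merely linear growth in $\|U^n_N\|_\gamma$ and no power of $N$. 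As a first step I split $U^n_N=\Pi_N U^n_N+\Pi_{(N,N^{\alpha}]}U^n_N$ as in \eqref{reformulated_scheme}. The high-frequency part obeys the free evolution \eqref{high_frequency}, so by the boundedness of $e^{tL}$ coming from \eqref{propL} and interpolation, $\|\Pi_{(N,N^{\alpha}]}U^n_N\|_\gamma\lesssim\|\Pi_{(N,N^{\alpha}]}U^0_N\|_\gamma\le\|U(0)\|_\gamma$ uniformly in $n$; this part needs no further work, and it remains to control the low-frequency part through \eqref{low_frequency}.

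For the three terms in \eqref{low_frequency} I would proceed as follows. Since $m>0$, the operator $L$ is skew-adjoint in an inner product whose norm is equivalent to $\|\cdot\|_\gamma$, so $e^{\tau L}$ is an isometry in that norm and the linear term contributes $\|e^{\tau L}\Pi_N U^n_N\|_\gamma\le(1+C\tau)\|U^n_N\|_\gamma$ (I would run Gronwall in the equivalent norm and transfer back, losing only a fixed constant rather than $C_0^{\,n}$). For $\tau e^{\tau L}I_NF(\Pi_N U^n_N)$ I would use that the second component is measured in $H^{\gamma-1}$ with $\gamma-1\le0$, whence $\|I_NF(\Pi_N U^n_N)\|_\gamma=\|I_Nf(\Pi_N u^n_N)\|_{H^{\gamma-1}}\le\|I_Nf(\Pi_N u^n_N)\|_{L^2}$; passing to grid values, the discrete $\ell^2$–$L^2$ norm equivalence on $S_N$ together with the linear growth $|f(s)|\lesssim1+|s|$ implied by \eqref{cond:f} gives $\|I_Nf(\Pi_N u^n_N)\|_{L^2}\lesssim1+\|\Pi_N u^n_N\|_{L^2}\lesssim1+\|U^n_N\|_\gamma$, crucially \emph{without any power of $N$} (here one must avoid routing through $L^\infty$, which would reintroduce a Bernstein factor $N^{d/2-\gamma}$).

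For the $H$-term I first note that the prefactor $(2L)^{-1}[\tau e^{\tau L}-(2L)^{-1}(e^{\tau L}-e^{-\tau L})]$ equals $\tfrac{\tau^2}{2}\psi(\tau L)$ for a scalar symbol $\psi$ that is uniformly bounded on the imaginary axis (the spectrum of $L$), so this operator is $O(\tau^2)$ uniformly on $\|\cdot\|_\gamma$. In $\|H(\Pi_N U^n_N)\|_\gamma$ the first component $\|I_Nf(\Pi_N u^n_N)\|_{H^\gamma}$ is \emph{clean}: a fractional chain-rule bound gives $\|f(\Pi_N u^n_N)\|_{H^\gamma}\lesssim1+\|U^n_N\|_\gamma$, and in the interpolation error (Lemma \ref{lemInterp}) the loss $N^{-(1-\gamma+)}$ exactly cancels the Bernstein gain $N^{1-\gamma+}$ (Lemma \ref{bound}). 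The only $N$-bearing contribution is the second component $\Pi_N(I_Nf'(\Pi_N u^n_N)\cdot\Pi_N v^n_N)$ in $H^{\gamma-1}$: a multiplier estimate of the type \eqref{dualargu2}/\eqref{embedding3} bounds it by $(\|f'\|_{L^\infty}+\|f'(\Pi_N u^n_N)\|_{H^{1+}})\|v^n_N\|_{H^{\gamma-1}}\lesssim(1+N^{1-\gamma+}\|U^n_N\|_\gamma)\|U^n_N\|_\gamma$. Hence the $H$-term contributes $O(\tau^2)+O(\tau^2N^{1-\gamma+})(\|U^n_N\|_\gamma+\|U^n_N\|_\gamma^2)$.

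Collecting these bounds gives a recursion of the form $\|U^{n+1}_N\|_\gamma\le(1+C\tau+C\tau^2N^{1-\gamma+})\|U^n_N\|_\gamma+C\tau+C\tau^2N^{1-\gamma+}(1+\|U^n_N\|_\gamma^2)$. The role of the step-size condition $N\le\tau^{-\frac1{1-\gamma}+\epsilon_0}$ is precisely to tame the single dangerous factor: choosing the implicit $+\epsilon$ small relative to $(1-\gamma)\epsilon_0$ makes $\tau N^{1-\gamma+}\lesssim\tau^{\delta}$ for some $\delta>0$, so that $\sum_{n\le T/\tau}\tau^2N^{1-\gamma+}=T\tau N^{1-\gamma+}\to0$ stays bounded. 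I would then close the argument by bootstrap induction: assuming $\|U^k_N\|_\gamma\le2C_0$ for $k\le n$ linearizes the quadratic term, a discrete Gronwall inequality yields $\|U^{n+1}_N\|_\gamma\le e^{CT}(\|U(0)\|_\gamma+1)$, and for $\tau\le\tau_0$ small this is $\le2C_0$, recovering the hypothesis; the resulting constant depends only on $\|U(0)\|_\gamma\le\sup_n\|U(t_n)\|_\gamma$, $\|f'\|_{L^\infty}$ and $\|f''\|_{L^\infty}$, as claimed. The main obstacle is exactly the nonlinear estimate in the $\gamma$-norm: the $I_NF$ term must grow linearly and with no power of $N$ (forcing the grid-value treatment of the interpolant and the direct-from-scheme route), while the single unavoidable $N^{1-\gamma+}$ loss in $H$ is admissible only because it is attached to the factor $\tau^2$ produced by the cancellation in the prefactor and is then absorbed by the step-size condition.
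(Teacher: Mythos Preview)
Your direct-from-scheme approach is correct and closes by the same mechanism you identify: the only $N$-bearing term carries a $\tau^2$ prefactor, and $\tau N^{1-\gamma+}\le\tau^\delta$ under the step-size condition. However, your stated reason for \emph{avoiding} the error equation is mistaken. The paper proves Lemma~\ref{lem1} precisely through the error equation \eqref{diff}--\eqref{sumid}, and the difference term $\tau\|I_N[f(\Pi_N u(t_n))-f(\Pi_N u^n_N)]\|_{H^{\gamma-1}}$ does \emph{not} force a factor $N^{1-\gamma}$: one simply bounds $H^{\gamma-1}$ by $L^2$ (since $\gamma-1\le0$) and uses Lipschitz continuity to get $\lesssim\tau\|f'\|_{L^\infty}\|\Pi_N(u(t_n)-u^n_N)\|_{L^2}\le\tau\|f'\|_{L^\infty}\|E^n_N\|_\gamma$, with no power of $N$ (this is the paper's estimate $J_{11}^n$ in \eqref{Rn11}). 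The $N^{1-\gamma+}$ loss in the paper's proof appears only in the $H$-term and in the interpolation-error corrections $J_{12}^n,J_{21}^n,J_3^n$, all of which carry $\tau^2$ or an extra $N^{-\gamma+}$, exactly as in your analysis; the consistency remainder is handled by \eqref{Ln_gamma} of Proposition~\ref{lem:error}.

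So the two routes are structurally parallel: both reduce to a recursion $\tau\cdot(\text{linear}) + \tau^{1+\delta}\cdot(\text{quadratic}) + \tau\cdot(\text{source})$ closed by induction/bootstrap. The paper's route has the advantage that the $\gamma$-norm bound on $E^n_N$ is an ingredient in the subsequent $0$-norm convergence proof (specifically in controlling $K_5^n,K_6^n,K_7^n$ of \eqref{estimate_Rn0}), so it is obtained once rather than in two passes. Your route is slightly more self-contained for the boundedness statement alone, and your grid-value argument for $\|I_Nf(\Pi_N u^n_N)\|_{L^2}\lesssim1+\|\Pi_N u^n_N\|_{L^2}$ is a clean way to avoid any Bernstein loss on the $I_NF$ term; the paper does not need this because the corresponding difference term is already Lipschitz in $L^2$.
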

\begin{proof}
To apply Gronwall's inequality, we iterate \eqref{diff} with respect to $n$. This yields the following expression: 
\begin{align}\label{sumid}
    E^{n+1}_N=e^{(n+1)\tau L} E^0_N
    +\sum_{j=0}^n e^{(n-j)\tau L} \mathcal{R}^j,
\end{align}
where $\mathcal{R}^n$ is given as follows:
\begin{align}\label{def-R^n}
    \mathcal{R}^n&=
    \tau {e}^{\tau L} I_N\big( F(\Pi_N U(t_n))
    		-F(\Pi_N U_N^n) \big) \notag\\
& \quad +(2 L)^{-1}
	\big[\tau {e}^{\tau L}
		-(2 L)^{-1}\big({e}^{\tau L}
		- {e}^{-\tau L}\big)\big] 
	\big(H (\Pi_NU (t_n) )
	-H (\Pi_N U_N^n)\big)
	+  \mathcal{L}^n.
\end{align}

By considering the $\gamma$-norm of both sides of \eqref{sumid}, we derive that
\begin{align}\label{n1}
\| E_N^{n+1}\|_ \gamma & \leq
C_0\Big(\| E_N^{0}\|_ \gamma+\sum_{j=0}^n\|\mathcal{R}^j\|_\gamma\Big).
\end{align}
The second term in the expression of $\mathcal{R}^n$ can be estimated by noticing and utilizing the following relation: 
\begin{align}\label{identiy}
&\quad\int_{0}^{s}{e}^{(\tau-2s) L}(\tau-s)
\big(H (\Pi_NU (t_n) )-H (\Pi_N U_N^n)\big)ds \\
&=(2 L)^{-1}\left[\tau {e}^{\tau L}-(2 L)^{-1}\left({e}^{\tau L}- {e}^{-\tau L}\right)\right] 
\big(H (\Pi_NU (t_n) )-H (\Pi_N U_N^n)\big). \notag
\end{align}
Using the definition of $H$ in \eqref{defH}, we have
\begin{align}\label{Rn}
\|\mathcal{R}^n\|_\gamma&	
	\lesssim \tau \| I_{N}\left[F( \Pi_N U(t_n))- F( \Pi_N U_N^n)\right]\|_ \gamma 
	+ \tau^2 \| I_N\left[f( \Pi_N u(t_n) ) - f( \Pi_N u_N^n)\right]\|_{H^\gamma} \notag \\
	&\quad+ \tau^2 \| I_N f^\prime (\Pi_N u(t_n)) \cdot \Pi_N v(t_n)
			- I_N f^\prime (\Pi_N u_N^n) \cdot \Pi_N v_N^n\|_{H^{\gamma-1}}
		+ \| \mathcal{L}^n\|_ \gamma \notag\\
&=: J^n_1+J^n_2+J^n_3+\| \mathcal{L}^n\|_ \gamma.
\end{align}
Regarding the presence of $I_N$ which may be unbounded operator in $H^{\gamma-1}$, we decompose $J^{n}_{1}$ as
\begin{align}\label{Rn1}
J^n_1 &\lesssim \tau\| f( \Pi_N u(t_n))- f( \Pi_N u_N^n)\|_ {H^{\gamma-1}}+\tau\| (I-I_N)\left[f( \Pi_N u(t_n))- f( \Pi_N u_N^n)\right]\|_ {H^{\gamma-1}}\notag\\
&=: J^{n}_{11}+J^{n}_{12}.
\end{align}
For the first term in \eqref{Rn1}, we get
\begin{align}\label{Rn11}
J^{n}_{11}\leq\tau\left\| f( \Pi_N u(t_n))- f( \Pi_N u_N^n)\right\|_{L^{2}}\lesssim \tau \|f'\|_{L^\infty} \| \Pi_N E^n_N\|_\gamma.
\end{align}

For $J^{n}_{12}$ in \eqref{Rn1}, we apply the error estimate of the interpolation operator $I_N$ in Lemma~\ref{lemInterp} to prove
\begin{align}\label{Rn12}
J^{n}_{12}&\leq \tau\| (I-I_N)\left[f( \Pi_N u(t_n))- f( \Pi_N u_N^n)\right]\|_ {L^2}\notag \\
&\lesssim \tau N^{-1-}\|f(\Pi_N u(t_n))-f(\Pi_N u^{n}_{N})\|_{H^{1+}}\notag\\
&\lesssim \tau N^{-\gamma+}(\|\Pi_N u(t_n)\|_{H^{\gamma}}+\|\Pi_N E^n_N\|_\gamma)^{1+},
\end{align}
where the third inequality follows from 
\begin{align}\label{3rd_Ineq_Rn12}
&\|f(\Pi_N u(t_n))-f(\Pi_N u^{n}_{N})\|_{H^{1+\varepsilon}}\notag\\
&\quad\lesssim \|f(\Pi_N u(t_n))-f(\Pi_N u^{n}_{N})\|^{1-\varepsilon}_{H^{1}}\|f(\Pi_N u(t_n))-f(\Pi_N u^{n}_{N})\|^{\varepsilon}_{H^{2}}\notag\\
&\quad\lesssim \left(\|\Pi_N u(t_n)\|_{H^{1}}+\|\Pi_N u_N^{n}\|_{H^{1}}\right)^{1-\varepsilon}\notag\\
&\quad\quad\cdot\left(\|\Pi_N u(t_n)\|_{H^{2}}+\|\Pi_N u(t_n)\|^{2}_{H^{1+\frac{d}{4}}}+\|\Pi_N u^{n}_{N}\|_{H^{2}}+\|\Pi_N u^{n}_{N}\|^{2}_{H^{1+\frac{d}{4}}}\right)^{\varepsilon}\notag\\
&\quad \lesssim (N^{1-\gamma})^{1-\varepsilon}\left(\|\Pi_N u(t_n)\|_{H^{1}}+\|\Pi_N u_N^{n}\|_{H^{1}}\right)^{1-\varepsilon}\notag\\
&\quad\quad \cdot (N^{2-\gamma}+N^{2(1+\frac{d}{4}-\gamma)})^{\varepsilon}\left(\|\Pi_N u(t_n)\|_{H^{\gamma}}+\|\Pi_N u(t_n)\|^{2}_{H^{\gamma}}+\|\Pi_N u^{n}_{N}\|_{H^{\gamma}}+\|\Pi_N u^{n}_{N}\|^{2}_{H^{\gamma}}\right)^{\varepsilon}\notag\\
&\quad\lesssim N^{1-\gamma+\varepsilon^{*}(\gamma)}(\|\Pi_N u(t_n)\|_{H^{\gamma}}+\|\Pi_N E^n_N\|_{\gamma})^{1+\varepsilon},
\end{align} 
with
\begin{align*}
\varepsilon^{*}(\gamma)=\left\{ \begin{array}{ll}
\varepsilon \quad &\text{for} \quad \gamma\geq\frac{d}{2}, \\[2mm]
(1-\gamma+\frac{d}{2})\varepsilon \quad &\text{for}\quad\gamma<\frac{d}{2}.
\end{array}
\right.
\end{align*}

For the term $J^{n}_2$, we decompose it as
\begin{align}\label{Rn2}
J^n_2 &\lesssim \tau^{2}\| f( \Pi_N u(t_n))- f( \Pi_N u_N^n)\|_ {H^{\gamma}}+\tau^{2}\| (I-I_N)\left[f( \Pi_N u(t_n))- f( \Pi_N u_N^n)\right]\|_ {H^{\gamma}}\notag\\
&=: J^{n}_{21}+J^{n}_{22}.
\end{align}
By the similar argument as that of \eqref{Rn11} and \eqref{Rn12}, we have
\begin{align}\label{Rn21}
J_{21}^n&\leq \tau^{2}\| f( \Pi_N u(t_n))- f( \Pi_N u_N^n)\|_ {H^{1}}\notag\\
&\lesssim \tau^2 \| \Pi_N (u(t_n) - u_N^n)\|_{L^2} +
\tau^2\|f^\prime(\Pi_N u(t_n)) \Pi_N \nabla u(t_n)- f^\prime( \Pi_N u_N^n) \Pi_N \nabla u_N^n\|_{L^2}\notag\\\notag
&\lesssim \tau^2\|\Pi_N E_N^n\|_{ \gamma} +
\tau^2\|f^{\prime}\|_{L^{\infty}} \|\Pi_N \nabla u(t_n)\|_{L^2} 
+ \tau^2\|f^{\prime}\|_{L^{\infty}}\|\Pi_N \nabla(u(t_n)- u_N^n)\|_{L^2}\\
&\lesssim \tau^2N^{1- \gamma}(1+\|\Pi_N E_N^n\|_{ \gamma}).
\end{align}
and
\begin{align}\label{Rn22}
J^{n}_{22}&\leq \tau^{2}\| (I-I_N)\left[f( \Pi_N u(t_n))- f( \Pi_N u_N^n)\right]\|_ {H^{\gamma}}\notag\\
&\lesssim \tau^{2} N^{-(1-\gamma)-}\|f(\Pi_N u(t_n))-f(\Pi_N u^{n}_{N})\|_{H^{1+}}\notag\\
&\lesssim \tau^{2} N^{0+}(\|\Pi_N u(t_n)\|_{H^{\gamma}}+\|\Pi_N E^{n}_{N}\|_{\gamma})^{1+}.
\end{align}
where we have used \eqref{3rd_Ineq_Rn12} in the last inequality.

For the third term of \eqref{Rn}, we have

\begin{align}\label{Rn3}
J_3^{n}
&\leq\tau^2\| ( f^\prime (\Pi_N u(t_n))-  f^\prime (\Pi_N u_N^n)) \Pi_N v_N^n\|_{ H^{\gamma-1}}\notag\\
&\quad+\tau^2\| (I-I_N)\left( f^\prime (\Pi_N u(t_n)) 
-  f^\prime (\Pi_N u_N^n)\right) \Pi_N v_N^n\|_{ H^{\gamma-1}}\notag\\
&\quad +\tau^2\|  f^\prime (\Pi_N u(t_n)) \Pi_N (v(t_n)-v_N^n)\|_{ H^{\gamma-1}}\notag\\
&\quad +\tau^2\|  (I-I_N) f^\prime (\Pi_N u(t_n)) \Pi_N (v(t_n)-v_N^n)\|_{ H^{\gamma-1}}\notag\\
&=:J^{n}_{31}+J^{n}_{32}+J^{n}_{33}+J^{n}_{34}
\end{align}

We first consider the case $\gamma\in(0,1)$. By the proof of \eqref{embedding}, for $d=1,2$, we have
\begin{align}\label{Rn31}\notag
J^{n}_{31}+J^{n}_{33}
&\lesssim \tau^2
\big(
\| \Pi_N v_N^n\|_{L^2}
\| f''\|_{L^\infty} \|\Pi_N u(t_n) -\Pi_N u_N^n\|_{L^{\frac{d}{1-\gamma}}}
+\|f^\prime\|_{L^\infty} \|\Pi_N v(t_n)-\Pi_N v_{N}^{n}\|_{L^2}
\big)\\
&\lesssim  \tau^{2}N^{1- \gamma}
\|\Pi_N E_N^n\|_ \gamma (1 + \|\Pi_N E_N^n\|_ \gamma).
\end{align}
By applying Lemma~\ref{lemInterp} and Sobolev embedding, we deduce
\begin{align}\label{Rn32}
J^{n}_{32}&\lesssim \tau^{2}\| \Pi_N v_N^n\|_{L^{\frac{d}{1-\gamma}}}\|(I-I_N)\left( f^\prime (\Pi_N u(t_n)) 
-  f^\prime (\Pi_N u_N^n)\right) \|_{L^{2}}\notag\\
&\lesssim \tau^{2} \| \Pi_N v_N^n\|_{H^{\gamma}}\cdot N^{-1-}\| f^\prime (\Pi_N u(t_n)) 
-  f^\prime (\Pi_N u_N^n)\|_{H^{1+}}\notag\\
&\lesssim \tau^{2} N\| \Pi_N v_N^n\|_{H^{\gamma-1}}\cdot N^{-\gamma+}\left(\|\Pi_N u(t_n)\|_{H^{\gamma}}+\|\Pi_N E_{N}^n\|_{\gamma}\right)^{1+}\notag\\
&\lesssim \tau^{2}\cdot N^{1-\gamma+}\left(\|\Pi_N v(t_n)\|_{H^{\gamma-1}}+\|\Pi_N E_{N}^n\|_{\gamma}\right)\cdot\left(\|\Pi_N u(t_n)\|_{H^{\gamma}}+\|\Pi_N E_{N}^n\|_{\gamma}\right)^{1+},
\end{align}
where we have used \eqref{3rd_Ineq_Rn12} in the third inequality.
Similarly, we can prove
\begin{align}\label{Rn34}
J^{n}_{34}\lesssim \tau^{2}N^{1-\gamma+}\|\Pi_N E^{n}_{N}\|_{\gamma}\|\Pi_N u(t_n)\|^{1+}_{H^{\gamma}}.
\end{align}

For the case $\gamma=1$, one can prove that
\begin{align}\label{Rn31_gamma=1}\notag
J^{n}_{31}+J^{n}_{33}
&\lesssim \tau^2
\big(
\| \Pi_N v_N^n\|_{L^2}
\| f''\|_{L^\infty} \|\Pi_N u(t_n) -\Pi_N u_N^n\|_{L^{\infty}}
+\|f^\prime\|_{L^\infty} \|\Pi_N v(t_n)-\Pi_N v_{N}^{n}\|_{L^2}
\big)\\
&\lesssim  \tau^{2}N^{1- \gamma+}
\|\Pi_N E_N^n\|_ \gamma (1 + \|\Pi_N E_N^n\|_ \gamma).
\end{align}
and 
\begin{align}\label{Rn32_gamma=1}
J^{n}_{32}+J^{n}_{34}\lesssim \tau^{2}N^{1-\gamma+}\left(\|\Pi_N v(t_n)\|_{H^{\gamma-1}}+\|\Pi_N E_{N}^n\|_{\gamma}\right)\cdot\left(\|\Pi_N u(t_n)\|_{H^{\gamma}}+\|\Pi_N E_{N}^n\|_{\gamma}\right)^{1+},
\end{align}
as in the same discussions of \eqref{Rn31} and \eqref{Rn32}.

Substituting \eqref{Rn1}--\eqref{Rn32_gamma=1}  and \eqref{Ln_gamma} for the estimates $J_i^{n}$, $i=1,2,3$ and $\|\mathcal{L}^{n}\|_{\gamma}$ into \eqref{Rn}, and noting that $\|U(t_n)\|_{\gamma}\lesssim 1$ and $\|\Pi_N E_{N}^n\|_{\gamma}\lesssim \|E_{N}^n\|_{\gamma}$,  we get
\begin{align}\label{estimate_Rn}
\|\mathcal{R}^n\|_\gamma&	
	\lesssim \tau \|E_{N}^n\|_{\gamma} +(\tau N^{-\gamma+}+\tau^{2}N^{1-\gamma+})\|E^{n}_N\|_{\gamma}^{2+}+\tau N^{-\gamma+}+\tau^{2}N^{1-\gamma+}.
\end{align}


Under the assumption $N\le \tau^{-\frac{1}{1- \gamma}+\epsilon_{0}}$, there exist $\epsilon=(1-\gamma)\epsilon_{0}-$ such that
\begin{align*}
\tau N^{1- \gamma+}\leq \tau^{1-(1-\gamma)(\frac{1}{1-\gamma}-\epsilon_{0})}\cdot N^{0+}= \tau^{\epsilon}.
\end{align*}

Therefore,  $\mathcal{R}^n$ satisfies 
\begin{align*}
    \|\mathcal{R}^n\|_\gamma \leq C\tau(\|E^n_N\|_\gamma+\tau^\epsilon\|E^n_N\|_\gamma^{2+}+1).
\end{align*}
Combining the above inequality with \eqref{n1} yields
\begin{align}\label{Eequ}
    \|E^{n+1}_N\|_\gamma \leq C_*(\|E^0_N\|_\gamma+1)
+C_*\tau\sum_{j=0}^n\big(\|E^j_N\|_\gamma+\tau^\epsilon\|E^j_N\|_\gamma^{2+}\big).
\end{align}
We choose $\tau$ small enough such that
\begin{align}\label{assumptau}
    \tau^\epsilon \left(e^{2C_*T}C_*(1+\|E^0_N\|_\gamma)\right)^{1+}\le1.
\end{align}
Then, we use the induction to prove 
\begin{align}\label{argun}
    \|E^n_N\|_\gamma \leq C_*(1+2C_*\tau)^n(1+\|E^0_N\|_\gamma).
\end{align}
For $n=0$, it is trivial. We assume it holds for any $n\le m$.
We shall prove it holds for $n=m+1$.
From \eqref{assumptau}, and the induction hypothesis, we obtain 
\begin{align*}
\tau^\epsilon\|E^n_N\|^{1+}_\gamma\le1,\quad n\le m.
\end{align*}
The above inequality simplifies the quadrature term in \eqref{Eequ}. Thus, we deduce
\begin{align*}
\| E_N^{m+1}\|_{\gamma} &\le C_* ( \| E_N^{0}\|_{\gamma} +1) + 2\tau C_* \sum_{j=0}^m \| E_N^{j}\|_{\gamma}\ (\textrm{by induction hypothesis})\\
&\le C_*(\| E_N^{0}\|_{\gamma}+1) + C_* (\| E_N^{0}\|_{\gamma}+1) \Big[ (1+2C_* \tau)^{m+1}-1\Big]\\
&\le C_*(\| E_N^{0}\|_{\gamma}+1)(1+2C_*\tau)^{m+1},
\end{align*}
which proves \eqref{argun} for $n = m+1$ and ends the mathematical induction. 
Furthermore, we deduce that
\begin{align*}
    \|E^{n}_N\|_\gamma\leq
    C_*e^{2C_* T}(1+\|E^0_N\|_\gamma)=C_*e^{2C_* T}\left(1+\|\Pi_{>N^{\alpha}}U^{0}\|_\gamma\right)\lesssim 1.
\end{align*}
That implies the result of the lemma.
\end{proof}

By considering the $0$-norm of $E_N^{n+1}$ using the expression in \eqref{sumid}, we have 
\begin{align}\label{eq12}
\| E_N^{n+1}\|_ 0 & \leq
C_0\Big(\| E_N^{0}\|_ 0+\sum_{j=0}^n\|\mathcal{R}^j\|_0\Big) . 
\end{align}
As in the proof of Lemma~\ref{lem1}, we recall the identity \eqref{identiy} and the definition of $H$ in \eqref{defH} to get
\begin{align}\label{estimate_Rn0}
\|\mathcal{R}^n\|_0&	\lesssim \tau \| f( \Pi_N u(t_n))- f( \Pi_N u_N^n)\|_ {H^{-1}}+\tau \| (I-I_N)\left(f( \Pi_N u(t_n))- f( \Pi_N u_N^n)\right)\|_ {H^{-1}} \notag\\
&\quad+\tau^2 \| f( \Pi_N u(t_n) ) - f( \Pi_N u_N^n)\|_{L^2}+ \tau^2 \| (I-I_N)\left( (f( \Pi_N u(t_n) ) - f( \Pi_N u_N^n))\right)\|_{L^2}\notag\\
	&\quad+ \tau^2 \|\left( f^\prime (\Pi_N u(t_n))-f^\prime (\Pi_N u^{n}_{N}) \right)\cdot \Pi_N v(t_n)\|_{H^{-1}}\notag\\
	&\quad+\tau^2 \|(I-I_N)\left( f^\prime (\Pi_N u(t_n))-f^\prime (\Pi_N u^{n}_{N}) \right)\cdot \Pi_N v(t_n)\|_{H^{-1}}\notag\\
		&\quad +\tau^{2} \| I_N f^\prime (\Pi_N u_N^n)( \Pi_N v(t_n) - \Pi_N v_N^n)\|_{H^{-1}} +\| \mathcal{L}^n\|_ 0\notag\\
		&=:\sum^{7}_{l=1}K^{n}_{l}+\| \mathcal{L}^n\|_ 0.
\end{align}
Since the $H^{-1}$ norm can be bounded by $L^{2}$ norm, it follows that
\begin{align*}
\sum_{l=1}^{4}K^{n}_{l}&\lesssim \tau \| f( \Pi_N u(t_n))- f( \Pi_N u_N^n)\|_ {L^{2}}+\tau \| (I-I_N)\left(f( \Pi_N u(t_n))- f( \Pi_N u_N^n)\right)\|_ {L^{2}} \\
&\lesssim \tau \|\Pi_N E^{n}_N\|_0+\tau N^{-1-}\|f( \Pi_N u(t_n))- f( \Pi_N u_N^n)\|_{H^{1+}}\\
&\lesssim \tau \|\Pi_N E^{n}_N\|_0+\tau N^{-\gamma+},
\end{align*}
where we have used the boundedness of $\|u_N^n\|_{H^{\gamma}}$ in Lemma~\ref{lem1} in the last inequality.
By \eqref{dualargu2} and \eqref{embedding2}, $K^n_5+K^n_7$ can be controlled as follows, 
\begin{align*}
K^n_5+K^n_7\lesssim&\tau^2 \| f^\prime( \Pi_N u(t_n)) -  f^\prime( \Pi_N u_N^n)\|_{L^2} \|  \Pi_N v(t_n)\|_{L^{2+}}\\
&+ \tau^2 \| I_N f^\prime (\Pi_N u_N^n)\|_{H^{1+}} \|( \Pi_N v(t_n) - \Pi_N v_N^n)\|_{H^{-1}}.
\end{align*}
Due to the boundedness of the trigonometric interpolation operator $I_N$ in $H^{s}$, $s>\frac{d}{2}$ and the estimate in Lemma \ref{lem1} we get $\| I_N f^\prime (\Pi_N u_N^n)\|_{H^{1+}}\lesssim \|f^\prime (\Pi_N u_N^n)\|_{H^{1+}}\lesssim N^{1-\gamma+}$. Thus we have
\begin{align*}
K^n_5+K^n_7\lesssim\tau^{2}\|\Pi_N E_N^{n}\|_ 0\left(N^{0+}\|  \Pi_N v(t_n)\|_{L^{2}}+\| I_N f^\prime (\Pi_N u_N^n)\|_{H^{1+}}\right)\lesssim\tau^{2}N^{1-\gamma+}\|\Pi_N E_N^{n}\|_ 0.
\end{align*}
To estimate $K^n_6$, we note from \eqref{embedding3} and Sobolev embedding $H^{0+}\hookrightarrow L^{2+}$ that
\begin{align*}
K^n_6\lesssim&\tau^2 \|(I-I_N)\left( f^\prime (\Pi_N u(t_n))-f^\prime (\Pi_N u^{n}_{N}) \right)\|^{\gamma}_{H^{0+}}\\
&\cdot\|(I-I_N)\left( f^\prime (\Pi_N u(t_n))-f^\prime (\Pi_N u^{n}_{N}) \right)\|^{1-\gamma}_{H^{1+}}\cdot \|\Pi_N v(t_n)\|_{H^{\gamma-1}}\\
\lesssim &\tau^{2}N^{-\gamma+}\|f^\prime (\Pi_N u(t_n))-f^\prime (\Pi_N u^{n}_{N}) \|_{H^{1+}}\cdot \|\Pi_N v(t_n)\|_{H^{\gamma-1}}\\
\lesssim &\tau^{2}N^{1-2\gamma+},
\end{align*}
where we have used Lemma~\ref{lemInterp} and Lemma~\ref{lem1} in the last two inequalities. 
 Then substituting the estimates on $K^n_i$, for $i=1,2,\cdots,7$ and \eqref{Ln0} into \eqref{estimate_Rn0}, we obtain
\begin{align*}
\|\mathcal{R}^n\|_0&\lesssim \tau\| E_N^{n}\|_ 0+ \tau^2 N^{1- \gamma+} \| E_N^n\|_0
+ \tau \kappa(N,\tau,\gamma),
\end{align*}
where we denote
\begin{align*}
\kappa(N,\tau,\gamma)=\left\{ \begin{array}{ll}
N^{- \gamma+} + \tau N^{1-2 \gamma+} ,\quad &\text{for} \quad \gamma\in(0,\frac{1}{2}],\\[2mm]
 N^{- \gamma+}+ \tau N^{1-2 \gamma+}+\min(\tau, \tau^2 N^{2(1- \gamma)+}),\quad &\text{for}\quad \gamma\in (\frac{1}{2},1].
\end{array}
\right.
\end{align*}
Under the assumption $N\le \tau^{-\frac{1}{1- \gamma}+\epsilon_{0}}$, by using Gronwall's inequality, we obtain
$$
 \left\|E_N^{n}\right\|_0 \lesssim 
 \|E_N^0\|_0 +\kappa(N,\tau,\gamma)=\|\Pi_{>N^{\alpha}}U^{0}\|_0+\kappa(N,\tau,\gamma),
 $$
which proves \eqref{eq:thm_2} and \eqref{eq:thm_1} when $\alpha\geq 1$.

\section{Proof of Theorem \ref{u11}}\label{section:proof2}

From the estimates of the remainders in Section \ref{section:proof} we can see that $\| R_1(t_n)\|_0$ and $\| R_8(t_n)\|_0 $ are $O( \tau N^{- \gamma+})$ while the other remainders are $O( \tau N^{-2\gamma+})$ under the step size condition $\tau\sim N^{-1}$. In the case $\gamma=\frac12-$, all the remainders except $\| R_1(t_n)\|_0$ and $\| R_8(t_n)\|_0 $ are $O( \tau^{2-})$, which leads to almost first-order convergence. Therefore, we only need to show the following improved error estimates for remainders $R_1(t_n)$ and $R_8(t_n)$ (see Lemma \ref{lem:improved_R1}  and Lemma \ref{lem:improved_R8}). 

\begin{lemma}\label{lem:improved_R1} 
Under the regularity condition $U\in C([0,T];H^{\gamma}(\Omega)\times H^{\gamma-1}(\Omega))$ with $\gamma\in(0,1]$, the remainder $R_1(t_n)$ defined in \eqref{defR1} satisfies the following improved estimate:
\begin{align}\label{eq:improved_R1}
\max_{0\le n\le M} \|R_1(t_n)\|_0\lesssim \tau N^{-2\gamma+} . 
\end{align}
\end{lemma}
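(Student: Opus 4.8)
The plan is to reuse the splitting $R_1(t_n)=\eqref{eq:1}+\eqref{eq:2}$ already introduced in the proof of Theorem~\ref{thm:convergence}, but to estimate both pieces in the $0$-norm (equivalently, the $H^{-1}$ norm of the second component, since $F(U)=(0,f(u))^T$) rather than in the $1$-norm used there. By $\|e^{(\tau-s)L}W\|_0\lesssim\|W\|_0$ from \eqref{propL} and $\int_0^\tau ds=\tau$, it suffices to prove, uniformly in $s\in[0,\tau]$,
$$
\|\Pi_{>N}f(\Pi_N u)\|_{H^{-1}}\lesssim N^{-2\gamma+}
\qquad\text{and}\qquad
\|f(u)-f(\Pi_N u)\|_{H^{-1}}\lesssim N^{-2\gamma+},
$$
the two bounds corresponding to \eqref{eq:1} and \eqref{eq:2} respectively (here $u=u(t_n+s)$).

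For the first bound I would apply Bernstein's inequality (Lemma~\ref{bound}) in the form $\|\Pi_{>N}w\|_{H^{-1}}\lesssim N^{-(2\gamma-)}\|w\|_{H^{2\gamma-1-}}$ with $w=f(\Pi_N u)$, reducing matters to $\|f(\Pi_N u)\|_{H^{2\gamma-1-}}\lesssim1$. Since $2\gamma-1\le\gamma\le1$, this is a composition estimate of order at most one: for $\gamma\le\frac12$ the exponent is nonpositive and the bound reduces to $\|f(\Pi_N u)\|_{L^2}\lesssim|f(0)|+\|f'\|_{L^\infty}\|\Pi_N u\|_{L^2}\lesssim1$, while for $\gamma>\frac12$ one uses $\|f(v)\|_{H^s}\lesssim|f(0)|+\|f'\|_{L^\infty}\|v\|_{H^s}$ for $0<s\le1$ (valid since $f',f''$ are bounded by \eqref{cond:f}) together with $\|\Pi_N u\|_{H^{2\gamma-1}}\le\|u\|_{H^\gamma}\lesssim1$.

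The main work is the second bound, and this is where the improvement from the naive $L^2$ rate $N^{-\gamma}$ to $N^{-2\gamma+}$ originates. By the mean value theorem, $f(u)-f(\Pi_N u)=G\,\Pi_{>N}u$ with $G=\int_0^1 f'(\theta u+(1-\theta)\Pi_N u)\,d\theta$, and I would estimate its $H^{-1}$ norm by duality: for $\|\phi\|_{H^1}=1$,
$$
\langle G\,\Pi_{>N}u,\phi\rangle
=\langle\Pi_{>N}u,\,G\phi\rangle
=\langle\Pi_{>N}u,\,\Pi_{>N}(G\phi)\rangle
\le\|\Pi_{>N}u\|_{L^2}\,\|\Pi_{>N}(G\phi)\|_{L^2},
$$
the second equality using that $\Pi_{>N}u$ is $L^2$-orthogonal to $\Pi_{\le N}(G\phi)$. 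Then $\|\Pi_{>N}u\|_{L^2}\lesssim N^{-\gamma}\|u\|_{H^\gamma}$ and $\|\Pi_{>N}(G\phi)\|_{L^2}\lesssim N^{-\gamma}\|G\phi\|_{H^\gamma}$ by Lemma~\ref{bound}, so the product is $\lesssim N^{-2\gamma}\|G\phi\|_{H^\gamma}$. The crucial point is that this argument never differentiates $u$ to integer order; it only needs $\|G\phi\|_{H^\gamma}\lesssim\|\phi\|_{H^1}$, which I would obtain from (i) $\|G\|_{L^\infty}\le\|f'\|_{L^\infty}\lesssim1$, (ii) the fractional composition bound $\|G\|_{H^\gamma}\lesssim\|f''\|_{L^\infty}\|u\|_{H^\gamma}\lesssim1$ (only the $\dot H^\gamma$ seminorm of the low-regularity argument enters, $\gamma\le1$), and (iii) a fractional Leibniz estimate $\|G\phi\|_{H^\gamma}\lesssim\|G\|_{L^\infty}\|\phi\|_{H^\gamma}+\|G\|_{H^\gamma}\|\phi\|_{L^{\infty-}}$; in $d=1$ this is immediate from $H^1\hookrightarrow L^\infty$, and in $d=2$ a standard $\epsilon$-loss in the exponents (absorbed into the ``$+$'') yields $\|G\phi\|_{H^{\gamma-}}\lesssim\|\phi\|_{H^1}$ and hence $N^{-2\gamma+}$.

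Taking the supremum over $\phi$ and combining the two pieces gives $\|f(u)-\Pi_N f(\Pi_N u)\|_{H^{-1}}\lesssim N^{-2\gamma+}$ uniformly in $s$ and $n$, which is \eqref{eq:improved_R1}. I expect the main obstacle to be establishing the composition bound $\|G\|_{H^\gamma}\lesssim1$ for $\gamma$ near $1$ while the argument $\theta u+(1-\theta)\Pi_N u$ still contains the full low-regularity solution $u$; this is precisely why the estimate must be carried out in the fractional space $H^\gamma$ (never $H^1$), and why the duality trick is essential: a direct use of the product estimate \eqref{embedding3} would force a factor $\|G\|_{H^{1+}}$ involving $\nabla u\notin L^2$.
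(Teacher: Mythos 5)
Your proposal is correct, and for the main term it takes a genuinely different route from the paper. Both proofs start from the same splitting of $R_1(t_n)$ into the projected piece $\int_0^\tau e^{(\tau-s)L}\Pi_{>N}F(\Pi_N U)\,ds$ and the nonlinear-difference piece $\int_0^\tau e^{(\tau-s)L}[F(U)-F(\Pi_N U)]\,ds$, and both handle the first piece by Bernstein (the paper bounds it in the $1$-norm by $\tau N^{-1-\gamma}\le\tau N^{-2\gamma}$; your $H^{-1}$-level Bernstein plus the fractional composition bound $\|f(\Pi_N u)\|_{H^{2\gamma-1}}\lesssim 1$ gives the same order). The difference is in the second piece. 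The paper performs a dyadic decomposition up to frequency $N^2$, writes each block as $f'(\xi)\cdot(\Pi_{2^{j+1}N}-\Pi_{2^jN})u$, and applies the product estimate \eqref{dualargu2}, trading the growth $\|f'(\xi)\|_{H^{1+}}\sim(2^jN)^{1-\gamma+}$ against the decay $\|(\Pi_{2^{j+1}N}-\Pi_{2^jN})u\|_{H^{-1}}\sim(2^jN)^{-1-\gamma}$ and summing the resulting $(2^jN)^{-2\gamma+}$. You instead avoid the dyadic sum entirely via the identity $\langle G\,\Pi_{>N}u,\phi\rangle=\langle\Pi_{>N}u,\Pi_{>N}(G\phi)\rangle$, which splits the two factors of $N^{-\gamma}$ symmetrically and never produces the dangerous $\|G\|_{H^{1+}}$. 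Your argument is shorter and arguably more transparent about where the doubling of the rate comes from, but it leans on tools the paper never states — the fractional composition estimate $\|f'(v)\|_{H^\gamma}\lesssim\|f''\|_{L^\infty}\|v\|_{H^\gamma}$ for non-integer $\gamma$ and a Kato--Ponce-type fractional Leibniz rule with the $\epsilon$-loss you describe for $d=2$ — whereas the paper's dyadic route reuses only \eqref{dualargu2} and the $H^{1+}$ composition bounds already established in Lemma \ref{lem:H_estimate}. If you adopt your version, those two fractional estimates should be stated and referenced explicitly.
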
 

\begin{proof}
We decompose $R_1(t_n)$ into the following two parts: 
\begin{align}
R_1(t_n)=
\label{eq:3}
& \int_0^\tau e^{(\tau-s) L} {\Pi_{>N}} F (\Pi_N U\left(t_n+s\right) ) d s \\
\label{eq:4}
& +\int_0^\tau e^{(\tau-s) L}
\left[F\left(U\left(t_n+s\right)\right)-F\left(\Pi_N U\left(t_n+s\right)\right)\right] d s , 
\end{align}
where \eqref{eq:4} can be further decomposed dyadically as follows (with $ m = \lceil \log_2 N\rceil$ below): 
\begin{align*}
\eqref{eq:4} 
&= \int_0^ \tau e^{(\tau - s) L}\left[ 
	F(U(t_n+s)) - F( \Pi_{2^m N} U(t_n+s))
\right] ds\\
&\quad\, + \sum_{j=0}^{m-1} \int_0^ \tau e^{(\tau - s) L} 
\left[ 
	F( \Pi_{2^{j+1}N} U(t_n+s)) - F( \Pi_{2^{j}N} U(t_n+s))
\right] ds =: G_0 + \sum_{j=0}^{m-1} G_j . 
\end{align*}
By applying the mean-value theorem, we obtain
\begin{align*}
\| G_j\|_0 &\lesssim \tau \sup_{s\in [0, \tau]}\| F( \Pi_{2^{j+1}N} U(t_n+s)) - F( \Pi_{2^{j}N} U(t_n+s))\|_0\\
& \lesssim \tau \max_{s, \theta} 
\| f^\prime( \xi) 
\cdot (\Pi_{2^{j+1}N} - \Pi_{2^{j}N}) u(t_n+s) \|_{H^{-1}},
\end{align*}
where $ \xi = \theta\Pi_{2^{j+1}N} u(t_n+s) + (1- \theta) \Pi_{2^{j}N} u(t_n+s) $.
By estimating the last term above using \eqref{dualargu2}, we have 
\begin{align*}
\| G_j\|_0& \lesssim \tau \| (\Pi_{2^{j+1}N} - \Pi_{2^{j}N}) u(t_n+s) \|_{H^{-1}}
( \| f^\prime( \xi)\|_{H^{1+}}
	+ \| f^\prime\|_{L^\infty} )\\
&\lesssim \tau(2^j N)^{-1- \gamma } \| u\|_{L^\infty H^ \gamma} 
	\left( (2^j N)^{1- \gamma+ } \| u\|_{L^\infty H^ \gamma} +1\right)\le \tau(2^j N)^{-2 \gamma+} \| u\|^2_{L^\infty H^ \gamma}.
\end{align*}
For $G_0$, we have
\begin{align*}
\| G_0\|_{1}&\lesssim
\tau \max_{s\in (0, \tau), \theta\in (0,1]} 
\| f^\prime( \theta u(t_n+s) + (1- \theta) \Pi_{2^mN} u(t_n+s)) 
	\cdot \Pi_{>2^mN} u(t_n+s)\|_{L^2}\\
&\lesssim \tau  \max_{s\in (0, \tau)}\| \Pi_{>N^2} u(t_n+s)\|_{L^2}\lesssim \tau N^{-2 \gamma} \| u\|_{L^\infty H^ \gamma}.
\end{align*}
In summary, we have
\begin{align}\label{eq4eq}
\|\eqref{eq:4}\|_0 &\lesssim \tau N^{-2 \gamma} + \sum_{j = 0}^{m} \tau (2^j N)^{-2 \gamma+} \lesssim \tau N^{-2 \gamma+}.
\end{align}
On the other hand, Bernstein's inequality in Lemma~\ref{bound} implies that 
\begin{align}\label{eq3eq}
\| \eqref{eq:3}\|_0 
&\lesssim \tau N^{-1- \gamma} 
\max_{s\in[0,\tau]} \| F( \Pi_N U(t_n+s))\|_{1+ \gamma} \lesssim \tau N^{-1- \gamma} \| U\|_ \gamma.
\end{align}
Therefore, by collecting \eqref{eq4eq} with \eqref{eq3eq}, we obtain \eqref{eq:improved_R1} for $\gamma\in(0,1]$. 
\hfill\end{proof}

\begin{lemma}\label{lem:improved_R8}
Under the regularity condition $U\in C([0,T];H^{\frac{1}{2}-}(\Omega)\times H^{-\frac{1}{2}-}(\Omega))$ and $u\in L^{\infty}(0,T;L^{\infty}(\Omega)\cap BV(\Omega))$, the remainder term $R_8(t_n)$ defined in \eqref{defR8} satisfies the following estimate: 
\begin{align}\label{eq:improved_R8}
\|R_8(t_n)\|_0\lesssim \tau N^{-1+}, \quad \text{for each}\quad 0\leq n\leq M.
\end{align}
\end{lemma}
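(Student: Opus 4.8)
The plan is to first reduce the vector-valued estimate to a scalar $H^{-1}$ estimate, and then to exploit the $BV$ regularity of the nonlinear term through a careful decomposition of the aliasing error. Since $F(\Pi_N U(t_n)) = (0,\, f(\Pi_N u(t_n)))^T$, the operator $(\Pi_N - I_N)$ acts nontrivially only in the second component, so the definition of the $0$-norm together with the stability bound $\|e^{\tau L}W\|_0 \lesssim \|W\|_0$ in \eqref{propL} gives
\begin{align*}
\|R_8(t_n)\|_0 \lesssim \tau\,\|(\Pi_N - I_N) f(\Pi_N u(t_n))\|_{H^{-1}} .
\end{align*}
Writing $w := f(\Pi_N u(t_n))$, I would then use the elementary identity $(\Pi_N - I_N) w = (w - I_N w) - \Pi_{>N} w$, which splits the quantity into a standard interpolation error and a Fourier-truncation tail, and estimate the two pieces separately in the $H^{-1}$ norm.

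The key structural input is that, under the hypothesis $u \in L^\infty(0,T;BV(\Omega)\cap L^\infty(\Omega))$, the composed function $w = f(\Pi_N u(t_n))$ inherits a $BV$ bound growing at most logarithmically in $N$. Indeed, since $\partial_x \Pi_N u = \Pi_N(\partial_x u)$ with $\partial_x u$ a finite measure, the logarithmic growth of the Lebesgue constant yields $\|\partial_x \Pi_N u\|_{L^1} \lesssim \log N\,\|u\|_{BV}$, and with $\|f'\|_{L^\infty}\lesssim 1$ from \eqref{cond:f} and $\|\Pi_N u\|_{L^2}\le\|u\|_{L^2}$ this gives $\|w\|_{BV}\lesssim N^{0+}$. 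The one-dimensional embedding $BV(\Omega)\hookrightarrow H^{\frac12-}(\Omega)$ then produces $\|w\|_{H^{\frac12-}}\lesssim N^{0+}$, after which Bernstein's inequality (Lemma \ref{bound}) controls the tail,
\begin{align*}
\|\Pi_{>N} w\|_{H^{-1}} \lesssim N^{-\frac32+}\,\|w\|_{H^{\frac12-}} \lesssim N^{-\frac32+},
\end{align*}
which is comfortably below the target rate $N^{-1+}$.

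The \emph{main obstacle} is the interpolation-error term $\|w - I_N w\|_{H^{-1}}$, since the crude $L^2$ bound used in \eqref{est:R8} yields only $N^{-\gamma+}$. Instead I would invoke the one-dimensional $L^p$ interpolation estimate of Lemma \ref{lemInterp_2} with $p = 1+\epsilon$, together with the Sobolev embedding $L^{1+}\hookrightarrow H^{-1}$ in one dimension (as used for \eqref{embedding2}), to obtain
\begin{align*}
\|w - I_N w\|_{H^{-1}} \lesssim \|w - I_N w\|_{L^{1+\epsilon}} \lesssim N^{-1}\,\|w\|_{W^{1,1+\epsilon}} .
\end{align*}
It then remains to prove $\|w\|_{W^{1,1+\epsilon}}\lesssim N^{0+}$, which is the crux: the quantity $\|\partial_x \Pi_N u\|_{L^{1+\epsilon}}$ is not directly controlled by either regularity hypothesis alone. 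My plan is to interpolate between the $L^1$ bound $\|\partial_x \Pi_N u\|_{L^1}\lesssim N^{0+}$ (from the $BV$ regularity, via the Lebesgue constant as above) and the $L^2$ bound $\|\partial_x \Pi_N u\|_{L^2}\lesssim N^{\frac12+}\|u\|_{H^{\frac12-}}$ (from Bernstein's inequality and $u\in H^{\frac12-}$); for $p=1+\epsilon$ sufficiently close to $1$ this gives $\|\partial_x \Pi_N u\|_{L^{1+\epsilon}}\lesssim N^{0+}$ and hence $\|w\|_{W^{1,1+\epsilon}}\lesssim N^{0+}$. Collecting the two contributions yields $\|(\Pi_N - I_N) w\|_{H^{-1}} \lesssim N^{-1+}$, and therefore $\|R_8(t_n)\|_0 \lesssim \tau N^{-1+}$ uniformly in $n$, as claimed.
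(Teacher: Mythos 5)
Your proof is correct, but it takes a genuinely different route from the paper's. Both arguments share the same skeleton --- reduce to $\tau\|(\Pi_N-I_N)f(\Pi_Nu(t_n))\|_{H^{-1}}$, pass to $L^{1+}$ via the embedding $L^{1+}\hookrightarrow H^{-1}$, and invoke the $W^{1,1+}$ interpolation estimate of Lemma \ref{lemInterp_2} --- but they diverge at the crux, namely how to control a $W^{1,1+}$ norm when $\nabla u$ is only a measure. The paper mollifies: it writes $f(\Pi_Nu)=f(\Pi_Nu_\varepsilon)+[f(\Pi_Nu)-f(\Pi_Nu_\varepsilon)]$ with $u_\varepsilon=Eu*\varphi_\varepsilon$, bounds the smooth part by $N^{-1+}\|u_\varepsilon\|_{BV}\lesssim N^{-1+}\|u\|_{BV}$, and disposes of the difference by a crude $\tau N^{\frac12+}\|u(t_n)-u_\varepsilon(t_n)\|_{L^{1+}}$ bound combined with the qualitative limit $\varepsilon\to0$ at each fixed $N$. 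You instead work directly with $w=f(\Pi_Nu(t_n))$: the identity $\partial_x\Pi_Nu=D_N*\partial_xu$ with the Dirichlet kernel satisfying $\|D_N\|_{L^1}\sim\log N$ gives $\|\partial_x\Pi_Nu\|_{L^1}\lesssim N^{0+}\|u\|_{BV}$, and interpolating this against the Bernstein bound $\|\partial_x\Pi_Nu\|_{L^2}\lesssim N^{\frac12+}\|u\|_{H^{\frac12-}}$ yields $\|\partial_x\Pi_Nu\|_{L^{1+\epsilon}}\lesssim N^{0+}$ for $\epsilon$ small, which is exactly what Lemma \ref{lemInterp_2} needs. Your additional splitting $(\Pi_N-I_N)w=(w-I_Nw)-\Pi_{>N}w$ and the tail bound via $BV\hookrightarrow H^{\frac12-}$ are also fine (in fact $\|\Pi_{>N}w\|_{H^{-1}}\lesssim N^{-1}\|w\|_{L^2}$ would already suffice for that piece). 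The trade-off: your version is fully quantitative and avoids the $\varepsilon$-limiting step, at the price of invoking the Lebesgue-constant bound for Fourier projections of measures, which the paper does not use; it also makes explicit that the $H^{\frac12-}$ hypothesis enters the $R_8$ estimate through the $L^2$ end of the interpolation, whereas in the paper it enters only through the crude bound on the mollification error.
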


Note that $R_8(t_n)$ is generated by the use of trigonometric interpolation $I_N$ on the nonlinear function for the implementation of FFT (instead of using projection $\Pi_N$). 
In order to prove Lemma~\ref{lem:improved_R8}, we need to use the following results for BV functions (see, for example \cite[Section 5.3]{Ziemer2012})

\begin{lemma}
For $u\in BV(\Omega)$ and $\varepsilon>0$, we define $u_{\varepsilon}=Eu*\varphi_{\varepsilon}$ as the regularization of $u$ based on an extension operator $E:L^1(\Omega)\rightarrow L^1(\R^d)$ which is bounded from $W^{k,p}(\Omega)$ to $W^{k,p}(\R^d)$ for all $k\ge 0$ and $1\le p\le\infty$, and a mollifier $\varphi_{\varepsilon}$ defined on $\R^d$. Then, for all sufficiently small $\varepsilon>0$,
\begin{align}\label{BV_1}
 \lim_{\varepsilon\rightarrow 0}\int_{\Omega}|u_{\varepsilon}-u|dx=0,
\end{align}
and
\begin{align}\label{BV_2} 
\|u_\varepsilon\|_{BV(\Omega)}\lesssim \|u\|_{BV(\Omega)}.
\end{align}
Meanwhile, $u_\varepsilon\in C^{\infty}(\Omega)$ and $\|u_\varepsilon\|_{BV(\Omega)}=\|u_\varepsilon\|_{W^{1,1}(\Omega)}$, and $\|u_\varepsilon\|_{L^{\infty}(\Omega)}\leq \|u\|_{L^{\infty}(\Omega)}$. 
\end{lemma}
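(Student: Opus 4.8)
The plan is to assemble standard mollification facts, the only genuinely non-routine point being that the hypothesized $W^{k,p}$-boundedness of $E$ must be upgraded to boundedness on $BV$. Throughout I use that the extension satisfies $(Eu)|_\Omega=u$ and that $\varphi_\varepsilon$ is a nonnegative mollifier with $\int_{\R^d}\varphi_\varepsilon=1$. First I would dispatch the soft statements. Since $\varphi_\varepsilon\in C_c^\infty(\R^d)$ and $Eu\in L^1(\R^d)$ (by the $L^1$-boundedness of $E$), differentiation under the integral sign gives $u_\varepsilon=Eu*\varphi_\varepsilon\in C^\infty(\R^d)$, hence $u_\varepsilon\in C^\infty(\Omega)$. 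Because $u_\varepsilon$ is smooth and $\Omega$ is bounded, its distributional gradient is the classical one and lies in $L^1(\Omega)$, so the total-variation measure satisfies $|\nabla u_\varepsilon|(\Omega)=\int_\Omega|\nabla u_\varepsilon|\,dx$; consequently $\|u_\varepsilon\|_{BV(\Omega)}=\|u_\varepsilon\|_{L^1}+\|\nabla u_\varepsilon\|_{L^1}=\|u_\varepsilon\|_{W^{1,1}(\Omega)}$, which is the claimed identity. For \eqref{BV_1}, the classical $L^1$ approximation property of mollifiers gives $Eu*\varphi_\varepsilon\to Eu$ in $L^1(\R^d)$, and restricting to $\Omega$ together with $(Eu)|_\Omega=u$ yields $\int_\Omega|u_\varepsilon-u|\,dx\le\|Eu*\varphi_\varepsilon-Eu\|_{L^1(\R^d)}\to 0$.

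The $L^\infty$ bound follows from Young's inequality: $\|u_\varepsilon\|_{L^\infty(\Omega)}\le\|Eu*\varphi_\varepsilon\|_{L^\infty(\R^d)}\le\|Eu\|_{L^\infty(\R^d)}\|\varphi_\varepsilon\|_{L^1}=\|Eu\|_{L^\infty(\R^d)}\le\|u\|_{L^\infty(\Omega)}$, where the last step uses that the chosen extension does not increase the supremum norm (automatic for periodic extension on the torus, and arrangeable for reflection/Stein-type $E$). The substantive estimate is \eqref{BV_2}. Having established the smoothness identity, it suffices to bound $\|\nabla u_\varepsilon\|_{L^1(\Omega)}$. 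Since mollification commutes with distributional differentiation, $\nabla u_\varepsilon=\varphi_\varepsilon*D(Eu)$, where $D(Eu)$ denotes the distributional-gradient (vector) measure of $Eu$. Convolving a finite measure $\mu$ with $\varphi_\varepsilon$ produces an $L^1$ function with $\|\mu*\varphi_\varepsilon\|_{L^1(\R^d)}\le|\mu|(\R^d)$, whence $\|\nabla u_\varepsilon\|_{L^1(\Omega)}\le\|\nabla u_\varepsilon\|_{L^1(\R^d)}\lesssim|D(Eu)|(\R^d)$. It therefore remains to prove the key claim $|D(Eu)|(\R^d)\lesssim\|u\|_{BV(\Omega)}$, i.e. that $E$ is bounded on $BV$.

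This boundedness on $BV$ is the one step not directly covered by the hypotheses, which assert boundedness of $E$ only on the integer-order spaces $W^{k,p}$, while $BV(\Omega)$ is strictly larger than $W^{1,1}(\Omega)$. I would obtain it by approximation: by the strict (interior) approximation theorem for $BV$ functions there exist $u_k\in C^\infty(\Omega)\cap W^{1,1}(\Omega)$ with $u_k\to u$ in $L^1(\Omega)$ and $\|u_k\|_{W^{1,1}(\Omega)}\to\|u\|_{BV(\Omega)}$. Applying the $W^{1,1}$-boundedness of $E$ gives $\|Eu_k\|_{W^{1,1}(\R^d)}\lesssim\|u_k\|_{W^{1,1}(\Omega)}$, while $L^1$-boundedness gives $Eu_k\to Eu$ in $L^1(\R^d)$. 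Lower semicontinuity of the total variation under $L^1$ convergence then yields $|D(Eu)|(\R^d)\le\liminf_k\|\nabla Eu_k\|_{L^1(\R^d)}\lesssim\liminf_k\|u_k\|_{W^{1,1}(\Omega)}=\|u\|_{BV(\Omega)}$. Combining this with $\|u_\varepsilon\|_{L^1(\Omega)}\le\|Eu\|_{L^1(\R^d)}\lesssim\|u\|_{L^1(\Omega)}$ produces \eqref{BV_2}.

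I expect the main obstacle to be precisely this upgrade from $W^{k,p}$- to $BV$-boundedness of $E$: since the hypotheses are stated only for integer-order Sobolev spaces, one cannot apply the boundedness to $u$ directly but must route through the strict-approximation theorem and the lower semicontinuity of the total variation. A secondary point requiring care is the constant $1$ in the $L^\infty$ estimate, which relies on the extension preserving the sup-norm; all remaining assertions are routine consequences of Young's inequality, commutation of mollification with differentiation, and elementary mollifier theory.
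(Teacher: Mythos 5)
Your proof is correct. Note, however, that the paper does not actually prove this lemma: it is stated as a collection of known facts with a pointer to \cite[Section 5.3]{Ziemer2012}, so there is no in-paper argument to match against; your writeup is a correct, self-contained reconstruction of what that citation covers. The one genuinely non-routine step you identify is the right one: the hypotheses only assert $W^{k,p}$-boundedness of $E$, which does not directly apply to $u\in BV(\Omega)\setminus W^{1,1}(\Omega)$, and your upgrade via the strict approximation theorem (smooth $u_k$ with $\|u_k\|_{W^{1,1}(\Omega)}\to\|u\|_{BV(\Omega)}$) combined with lower semicontinuity of the total variation under $L^1$ convergence is exactly the standard mechanism in Ziemer's treatment; together with $\|\mu*\varphi_\varepsilon\|_{L^1}\le|\mu|(\mathbb{R}^d)$ and $\nabla(Eu*\varphi_\varepsilon)=\varphi_\varepsilon*D(Eu)$ this gives \eqref{BV_2} cleanly. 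Two small remarks. First, you correctly flag that the claimed $L^\infty$ bound with constant $1$ is not a consequence of the stated hypotheses (which only give boundedness with a constant for $k=0$, $p=\infty$) and needs a sup-norm-preserving choice of $E$; in the paper's setting $\Omega=[0,1]^d$ is a torus, so the periodic extension does this exactly, and your proof should say explicitly that this is the $E$ being used rather than leaving it as ``arrangeable''. Second, on the torus one can bypass the extension operator entirely: defining $u_\varepsilon$ by periodic convolution, the duality definition of total variation gives $|Du_\varepsilon|(\Omega)\le|Du|(\Omega)$ and $\|u_\varepsilon\|_{L^\infty}\le\|u\|_{L^\infty}$ with constant $1$ and no strict-approximation argument, which is a shorter route to the same conclusions; your version has the merit of following the lemma's stated hypotheses verbatim.
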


\begin{proof}[Proof of Lemma~\ref{lem:improved_R8}]
By recalling the definition of $R_8(t_n)$ in \eqref{defR8} and applying the 0 norm, we obtain
\begin{align}\label{improved_R8}
    \|R_8(t_n)\|_0\lesssim \tau \|( \Pi_N - I_N) f( \Pi_N u(t_n))\|_{H^{-1}}\lesssim \tau \|( \Pi_N - I_N) f( \Pi_N u(t_n))\|_{L^{1+}}.
\end{align}
In one dimension, we obtain the following estimate by using Lemma~\ref{bound} and Lemma~\ref{lemInterp_2}: 
\begin{align}\label{R8_0norm}
\|R_8(t_n)\|_0&\lesssim \tau \|( \Pi_N - I_N) f(\Pi_N u_\varepsilon(t_n))\|_{L^{1+}} 
    +\tau \|( \Pi_N - I_N) \left[f( \Pi_N u(t_n))-f(\Pi_N u_\varepsilon(t_n))\right]\|_{L^{1+}}\notag\\
    &\lesssim \tau N^{-1} \|f(\Pi_N u_\varepsilon(t_n))\|_{W^{1,1+}}+\tau N^{-1}\|f( \Pi_N u(t_n))-f(\Pi_N u_\varepsilon(t_n))\|_{W^{1,1+}}.
\end{align}
The first term in the right hand side of \eqref{R8_0norm} can be estimated by using the Lipschitz continuity of $f$ and the Sobolev embedding, i.e., 
\begin{align*}
&\tau N^{-1}\|f(\Pi_N u_\varepsilon(t_n))\|_{W^{1,1+}}\lesssim\tau N^{-1}\|\Pi_N u_\varepsilon(t_n)\|_{W^{1,1+}}\\
&\qquad\lesssim \tau N^{-1+} \| u_\varepsilon(t_n)\|_{W^{1-,1+}}
    \lesssim \tau N^{-1+} \| u_\varepsilon(t_n)\|_{W^{1,1}}= \tau N^{-1+} \| u_\varepsilon(t_n)\|_{BV}.
\end{align*}
The second term in the right hand side of \eqref{R8_0norm} can be estimated as follows: 
\begin{align*}
&\tau N^{-1}\|f( \Pi_N u(t_n))-f(\Pi_N u_\varepsilon(t_n))\|_{W^{1,1+}}\leq \tau N^{-1}\|f( \Pi_N u(t_n))-f(\Pi_N u_\varepsilon(t_n))\|_{L^{1+}}\\
&\qquad +\tau N^{-1}\|\nabla \Pi_N u(t_n)\left(f^{\prime}( \Pi_N u(t_n))-f^{\prime}(\Pi_N u_\varepsilon(t_n))\right)\|_{L^{1+}}\\
&\qquad+\tau N^{-1}\|f^{\prime}(\Pi_N u_\varepsilon(t_n))\left(\nabla \Pi_N u(t_n))-\nabla \Pi_N u^n_\varepsilon \right)\|_{L^{1+}}\\
&\lesssim \tau N^{\frac{1}{2}+}\|u(t_n)-u_\varepsilon(t_n)\|_{L^{1+}}.
\end{align*}
where the last inequality uses the Lipschitz condition of $f$ and the following result: 
\begin{align*}
&\tau N^{-1}\|\nabla \Pi_N u(t_n)\left(f^{\prime}( \Pi_N u(t_n))-f^{\prime}(\Pi_N u_\varepsilon(t_n))\right)\|_{L^{1+}}\\
&\lesssim \tau N^{-1}\|\nabla \Pi_N u(t_n) \|_{L^{\infty}}\|f^{\prime}( \Pi_N u(t_n))-f^{\prime}(\Pi_N u_\varepsilon(t_n))\|_{L^{1+}}\\
&\lesssim \tau N^{\frac{1}{2}+}\|u(t_n)\|_{H^{\frac{1}{2}-}}\|u(t_n)-u_\varepsilon(t_n)\|_{L^{1+}} .
\end{align*}
Combining these estimates, we have 
\begin{align}\label{improved_R8_1}
    \|R_8(t_n)\|_0\lesssim \tau N^{-1+} \| u_\varepsilon(t_n)\|_{BV}+\tau N^{\frac{1}{2}+}\|u(t_n)-u_\varepsilon(t_n)\|_{L^{1+}} . 
\end{align}
Under the assumption $u\in L^{\infty}([0,T]; L^{\infty}(\Omega))$, since $\|u_\varepsilon\|_{L^{\infty}}$ is uniformly bounded by $\|u\|_{L^{\infty}}$ and, according to \eqref{BV_1}, 
\begin{align*}
\|u(t_n)-u_\varepsilon(t_n)\|_{L^{1+}}\leq \|u(t_n)-u_\varepsilon(t_n)\|^{0+}_{L^{\infty}}\|u(t_n)-u_\varepsilon(t_n)\|^{1-}_{L^{1}}\rightarrow 0
\quad\mbox{as $\varepsilon\rightarrow 0$}. 
\end{align*}
Thus, for any fixed $N$, we can choose $\varepsilon\ll 1$ such that 
$\tau N^{\frac{1}{2}+}\|u(t_n)-u_\varepsilon(t_n)\|_{L^{1+}}\lesssim \tau N^{-1+}.$ 
This, together with \eqref{BV_2} and \eqref{improved_R8_1}, yields \eqref{eq:improved_R8}. 
This proves Lemma \ref{lem:improved_R8}. 
\end{proof}

By applying the improved error estimates in Lemma~\ref{lem:improved_R1} and Lemma~\ref{lem:improved_R8}, one get:

\begin{lemma}\label{lem:improved_L}
Under the conditions of Theorem~\ref{u11}, the remainder term $\mathcal{L}^{n}$ in \eqref{diff} satisfies the following improved estimate
\begin{align}\label{eq:improved_L}
\|\mathcal{L}^{n}\|_0\lesssim \tau^{2-},\quad \text{and}\quad \|\mathcal{L}^{n}\|_1\lesssim \tau^{\frac{3}{2}-},
\end{align}
for each $0\leq n\leq M$.
\end{lemma}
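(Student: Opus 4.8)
The plan is to decompose $\mathcal{L}^n=\sum_{i=1}^{9}R_i(t_n)$ and to prove the two bounds separately, in each case specializing the remainder estimates of Section~\ref{section:proof} to the regularity $\gamma=\frac12-$ and the step-size relation $\tau\sim N^{-1}$, and upgrading only the bottleneck terms. For $\|\mathcal{L}^n\|_0\lesssim\tau^{2-}$ I would first note that, by the proof of Proposition~\ref{lem:error}, every remainder other than $R_1,R_8$ is controlled in the $0$-norm via \eqref{est:R2}, \eqref{est:R3}, \eqref{0_norm_R4_new}, \eqref{1_norm_I2} and \eqref{0_norm_R9} (bounding the $0$-norm by the $1$-norm wherever only a $1$-norm estimate is recorded); under $\gamma=\frac12-$ and $\tau\sim N^{-1}$ these are at worst $O(\tau^2 N^{1-2\gamma+})=O(\tau^2 N^{0+})\sim O(\tau^{2-})$. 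The two exceptional terms $R_1,R_8$, which gave only $\tau N^{-\gamma+}\sim\tau^{3/2-}$ in the general theory, are exactly the ones sharpened beforehand: invoking $\|R_1(t_n)\|_0\lesssim\tau N^{-2\gamma+}$ from Lemma~\ref{lem:improved_R1} and $\|R_8(t_n)\|_0\lesssim\tau N^{-1+}$ from Lemma~\ref{lem:improved_R8}, both reduce to $\tau N^{-1+}\sim\tau^{2-}$. Summing the nine contributions yields the first estimate.

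For $\|\mathcal{L}^n\|_1\lesssim\tau^{3/2-}$ no sharpening of $R_1,R_8$ is required: the $1$-norm bounds already in Proposition~\ref{lem:error}, namely \eqref{est:R1}, \eqref{est:R2}, \eqref{est:R3}, \eqref{1_norm_R4}, \eqref{1_norm_I2} and \eqref{est:R8}, specialize to dominant contributions $\|R_1\|_1\lesssim\tau N^{-\gamma}\sim\tau^{3/2-}$, $\|R_4\|_1\lesssim\tau^2 N^{1-\gamma}\sim\tau^{3/2-}$ and $\|R_8\|_1\lesssim\tau N^{-\gamma+}\sim\tau^{3/2-}$, while $R_2,R_3$ and $I_2=R_5+R_6+R_7$ are $O(\tau^2)$. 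The only term not supplied by Section~\ref{section:proof} is $R_9$ in the $1$-norm, which I would estimate directly from \eqref{defR9}: its operator prefactor $(2L)^{-1}[\tau e^{\tau L}-(2L)^{-1}(e^{\tau L}-e^{-\tau L})]$ acts as a Fourier multiplier of size $O(\tau^2)$ on the frequencies present in $S_N$ (since $\tau\sim N^{-1}$ gives $\tau|\lambda|\lesssim1$), so that $\|R_9\|_1\lesssim\tau^2\big(\|(\Pi_N-I_N)f(\Pi_N u)\|_{H^1}+\|(I-I_N)f'(\Pi_N u)\,\Pi_N v\|_{L^2}\big)$. For the first factor I would combine Lemma~\ref{lemInterp} with $\|f(\Pi_N u)\|_{H^{1+}}\lesssim N^{1-\gamma+}$ (as in Lemma~\ref{lem:H_estimate}) to obtain $N^{1/2+}$; for the second, H\"older's inequality, the one-dimensional embedding $H^{1/2+}\hookrightarrow L^\infty$ giving $\|(I-I_N)f'(\Pi_N u)\|_{L^\infty}\lesssim N^{0+}$, and Bernstein's inequality $\|\Pi_N v\|_{L^2}\lesssim N^{1-\gamma}\|v\|_{H^{\gamma-1}}\sim N^{1/2+}$. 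Both factors are $O(N^{1/2+})$, hence $\|R_9\|_1\lesssim\tau^2 N^{1/2+}\sim\tau^{3/2-}$, and summing gives the second estimate.

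The main obstacle I expect is the fresh $1$-norm estimate of $R_9$, and within it the $L^2$ control of the product $(I-I_N)f'(\Pi_N u)\cdot\Pi_N v$ when $v$ only has the negative regularity $H^{-1/2-}$: one must extract enough decay from the interpolation error of $f'(\Pi_N u)$ (through the $L^\infty$ embedding) to absorb the $N^{1/2+}$ growth of $\|\Pi_N v\|_{L^2}$, and track the arbitrarily small exponents so that the net power of $N$ is exactly $\frac12+$ and no spurious positive power of $N$ survives. Everything else is bookkeeping: collecting the estimates of Section~\ref{section:proof} and inserting $\gamma=\frac12-$ and $\tau\sim N^{-1}$.
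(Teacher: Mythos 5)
Your proposal is correct and follows essentially the same route as the paper: decompose $\mathcal{L}^n=\sum_{i=1}^9 R_i(t_n)$, reuse the Section~\ref{section:proof} bounds at $\gamma=\tfrac12-$ with $\tau\sim N^{-1}$, invoke Lemmas~\ref{lem:improved_R1} and~\ref{lem:improved_R8} for the $0$-norm bottlenecks $R_1,R_8$, and supply a fresh $O(\tau^2 N^{1/2+})$ estimate of $\|R_9\|_1$. The only (cosmetic) difference is in that last step, where you place the $L^\infty$ bound on the interpolation error $(I-I_N)f'(\Pi_N u)$ and the $L^2$ bound on $\Pi_N v$, whereas the paper does the reverse ($\|(I-I_N)f'(\Pi_N u)\|_{L^2}\,\|\Pi_N v\|_{H^{1/2+}}$); both yield $N^{3/2-2\gamma+}=N^{1/2+}$.
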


\begin{proof}
Under the conditions of Theorem~\ref{u11}, we combine the improved estimates Lemma~\ref{lem:improved_R1}, Lemma~\ref{lem:improved_R8} with \eqref{est:R2}, \eqref{est:R3}, \eqref{0_norm_R4_new}, \eqref{1_norm_I2}, and \eqref{0_norm_R9}, to get
\begin{align}\label{improved_L}
     \| \mathcal{L}^n\|_0=\sum_{i = 1}^9 \|R_i(t_n)\|_0\lesssim 
	\tau^2N^{0+}
         + \tau N^{-1+}\lesssim \tau^{2-},
\end{align}
under the step size condition $N=O(\tau^{-1})$.

On the other hand, recalling the 1-norm estimates on $R_i(t_n)$, $i=1, \cdots , 8$ in \eqref{est:R1}-\eqref{est:R3}, \eqref{1_norm_R4}, \eqref{1_norm_I2}, \eqref{est:R8} and the expression of $R_9(t_n)$ in \eqref{defR9} we obtain
\begin{align}\label{eq:Ln_H1}
\| \mathcal{L}^n\|_1&\leq \sum_{i=0}^{8}\|R_{i}(t_n)\|_{1}+\tau^2 \left(
 	\| ( \Pi_N - I_N) f( \Pi_N u(t_n))\|_{H^ 1} + 
	\|(I-I_N)f^\prime( \Pi_N u(t_n)) \Pi_N v(t_n)\|_{L^{2}}
 	\right).
\end{align}
By using the boundedness of $\Pi_N$ and $I_N$ in Lemma~\ref{bound} and Lemma~\ref{lemInterp}, there holds 
$$\| ( \Pi_N - I_N) f( \Pi_N u(t_n))\|_{H^ 1}\lesssim \| f( \Pi_N u(t_n))\|_{H^ 1}\lesssim N^{1-\gamma},$$
 and
\begin{align*}
\|(I-I_N)f^\prime( \Pi_N u(t_n)) \Pi_N v(t_n)\|_{L^{2}}&\lesssim \|(I-I_N)f^\prime( \Pi_N u(t_n))\|_{L^{2}}\|\Pi_N v(t_n)\|_{H^{\frac{1}{2}+}}\\
&\lesssim N^{-1}\|f^\prime( \Pi_N u(t_n))\|_{H^{1}}\|\Pi_N v(t_n)\|_{H^{\frac{1}{2}+}}\lesssim N^{\frac{3}{2}-2\gamma}.
\end{align*}
Thus substituting these estimates into \eqref{eq:Ln_H1}, we get
\begin{align*}
\| \mathcal{L}^n\|_1&\lesssim \tau(N^{-\gamma}+\tau N^{1-\gamma}+\tau)+\tau^{2}(N^{1-\gamma}+N^{\frac{3}{2}-2\gamma})\lesssim \tau^{\frac{3}{2}-},
\end{align*}
for $\gamma=\frac{1}{2}-$, under the step size condition $N=O(\tau^{-1})$. 
\end{proof}

We note from \eqref{estimate_Rn0} and the proof of Theorem~\ref{thm:convergence} that all the terms $K^{n}_j$ except $K^n_2$ are bounded by $O(\tau \|\Pi_N E^n_N\|_0+\tau^{2-})$, under the step size condition $\tau=O(N^{-1})$ when $\gamma=\frac{1}{2}-$. Therefore, in order to prove an improved error estimate for $K^n_2$, we need to first prove the following boundedness result for the projection error $\|\Pi_N U(t_n)-\Pi_N U^{n}_N\|_1$.

\begin{lemma}\label{lem:H1_error_bound}
Under the conditions in Theorem~\ref{u11}, there exists $\tau_0\in(0,1)$ such that for $\tau\in(0,\tau_0)$, we have the following estimate
\begin{align}\label{eq:H1_error_bound}
\|\Pi_N U(t_n)-\Pi_N U^{n}_N\|_1\lesssim \tau^{\frac{1}{2}-},
\end{align}
for each $0\leq n\leq M$.
\end{lemma}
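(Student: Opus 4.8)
\emph{Overall approach.} The plan is to read the bound off the iterated error equation \eqref{sumid}, exploiting that $H^1$-type control is only required at the reduced order $\tau^{1/2-}$. Applying $\Pi_N$ to \eqref{sumid}, using that $\Pi_N$ commutes with $L$ and the uniform bound $\|e^{(n-j)\tau L}W\|_1\lesssim\|W\|_1$ from \eqref{propL}, I would reduce the task to
$$
\|\Pi_N E_N^{n+1}\|_1\lesssim \|\Pi_N E_N^0\|_1+\sum_{j=0}^n\|\Pi_N\mathcal{R}^j\|_1 .
$$
Since $\alpha=2$, the initial error $E_N^0=(I-\Pi_{N^2})U^0$ obeys $\Pi_N E_N^0=0$, so the first term drops out. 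Throughout I would rely on three already-available ingredients: the a priori boundedness $\|U_N^n\|_{1/2-}\lesssim 1$ from Lemma~\ref{lem1} (hence $\|\Pi_N E_N^n\|_{1/2-}\lesssim 1$), the general $L^2$ estimate $\|E_N^n\|_0\lesssim\tau^{1/2-}$ obtained from Theorem~\ref{thm:convergence}(i) at $\gamma=\frac12-$ under $\tau\sim N^{-1}$, and the consistency bound $\|\mathcal{L}^n\|_1\lesssim\tau^{3/2-}$ from Lemma~\ref{lem:improved_L}.

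\emph{Decomposition of $\|\Pi_N\mathcal{R}^n\|_1$.} Recalling $\mathcal{R}^n$ from \eqref{def-R^n} and $H$ from \eqref{defH}, I would split $\|\Pi_N\mathcal{R}^n\|_1$ exactly as in \eqref{Rn} but with the $1$-norm replacing the $\gamma$-norm. This produces three nonlinear difference terms, namely $\tilde J_1^n=\tau\|I_N(f(\Pi_N u(t_n))-f(\Pi_N u_N^n))\|_{L^2}$, $\tilde J_2^n=\tau^2\|I_N(f(\Pi_N u(t_n))-f(\Pi_N u_N^n))\|_{H^1}$, and $\tilde J_3^n=\tau^2\|I_Nf'(\Pi_N u(t_n))\Pi_N v(t_n)-I_Nf'(\Pi_N u_N^n)\Pi_N v_N^n\|_{L^2}$, plus the term $\|\mathcal{L}^n\|_1$. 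The target is to show that each is bounded by $\tau\|\Pi_N E_N^n\|_0+\tau^2\|\Pi_N E_N^n\|_1+\tau^{3/2-}$. For $\tilde J_1^n$, writing $I_N=\Pi_N+(I_N-\Pi_N)$, the Lipschitz bound gives the principal contribution $\tau\|\Pi_N E_N^n\|_0$, while the interpolation correction is controlled by Lemma~\ref{lemInterp} together with \eqref{3rd_Ineq_Rn12}, reducing it to a harmless $\tau^{3/2-}$ under $\tau\sim N^{-1}$.

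\emph{The crux: $\tilde J_2^n$ and $\tilde J_3^n$.} Here one must estimate $\nabla[f(\Pi_N u(t_n))-f(\Pi_N u_N^n)]$ in $L^2$. I would split this into $(f'(\Pi_N u(t_n))-f'(\Pi_N u_N^n))\,\nabla\Pi_N u(t_n)$ and $f'(\Pi_N u_N^n)\,\nabla\Pi_N(u(t_n)-u_N^n)$. For the first piece, the one-dimensional embedding $H^{1/2+}\hookrightarrow L^\infty$ with Lemma~\ref{bound} gives $\|\Pi_N(u(t_n)-u_N^n)\|_{L^\infty}\lesssim N^{1/2+}\|\Pi_N E_N^n\|_0$ and $\|\nabla\Pi_N u(t_n)\|_{L^2}\lesssim N^{1/2+}$, so this piece is $\lesssim N^{1+}\|\Pi_N E_N^n\|_0$; multiplying by $\tau^2$ and using $\tau\sim N^{-1}$ with $\|\Pi_N E_N^n\|_0\lesssim\tau^{1/2-}$ turns it into a source term $\tau^{3/2-}$. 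The second piece is bounded by $\|f'\|_{L^\infty}\|\Pi_N E_N^n\|_1$, contributing the feedback term $\tau^2\|\Pi_N E_N^n\|_1$. The main obstacle is precisely that this $H^1$-feedback must carry the full $\tau^2$ prefactor rather than $\tau$, so that all the $N$-growth generated by differentiating the band-limited factors is exactly absorbed by $\tau\sim N^{-1}$ and no order is lost; the term $\tilde J_3^n$ is treated identically, using that the $v$-component of $\Pi_N E_N^n$ is measured in $L^2$ in the $1$-norm.

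\emph{Conclusion via discrete Gronwall.} Finally I would sum over $j$ and apply discrete Gronwall. The contributions $\tau\|\Pi_N E_N^j\|_0$ and $\tau^{3/2-}$ sum to $O(\tau^{1/2-})$ since there are $O(\tau^{-1})$ terms, whereas the feedback term $\tau^2\sum_{j=0}^n\|\Pi_N E_N^j\|_1$ has total prefactor $\tau^2 n\lesssim\tau$, so Gronwall's inequality leaves the order unchanged and yields $\|\Pi_N E_N^n\|_1\lesssim\tau^{1/2-}$. The only genuinely delicate bookkeeping lies in the previous step: keeping every $H^1$-feedback at the $\tau^2$ level and every remaining term at $O(\tau^{3/2-})$ after balancing the $N$-powers against $\tau\sim N^{-1}$ and the a priori bounds from Lemma~\ref{lem1} and Theorem~\ref{thm:convergence}.
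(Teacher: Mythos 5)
Your proposal is correct and follows essentially the same route as the paper: project the iterated error equation \eqref{sumid} with $\Pi_N$ (so that $\Pi_N E_N^0=0$ for $\alpha=2$), split $\Pi_N\mathcal{R}^n$ into the same three nonlinear difference terms plus $\|\mathcal{L}^n\|_1\lesssim\tau^{3/2-}$ from Lemma~\ref{lem:improved_L}, control the differentiated nonlinearities via the one-dimensional embedding $H^{\frac12+}\hookrightarrow L^\infty$ and Bernstein's inequality under $\tau\sim N^{-1}$, and close with Gronwall. The only (harmless) deviation is that you convert the zeroth-order feedback into a source term by invoking the already-proven $L^2$ bound $\|E_N^n\|_0\lesssim\tau^{\frac12-}$ from Theorem~\ref{thm:convergence}, whereas the paper keeps all feedback in the form $\tau\|\Pi_N E_N^j\|_1$ (plus a quadratic term) and lets Gronwall absorb it directly — both close the argument at the same order.
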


\begin{proof}
Apply the projection operator $\Pi_N$ on both side of \eqref{sumid}, then we get
\begin{align}\label{projection_diff}
    \Pi_N E^{n+1}_N=\Pi_N\left(U(0)-U^{0}_N\right)+\sum_{j=0}^n e^{(n-j)\tau L} \Pi_N \mathcal{R}^j,
\end{align}
where $\mathcal{R}^n$ in \eqref{projection_diff} is given by \eqref{def-R^n}. 
Using the identity \eqref{identiy} and the definition of $H$ in \eqref{defH}, we have
\begin{align}\label{Rn_H1}
\|\mathcal{R}^n\|_1&	
	\lesssim \tau \| I_{N}\left[F( \Pi_N U(t_n))- F( \Pi_N U_N^n)\right]\|_ 1 
	+ \tau^2 \| I_N\left[f( \Pi_N u(t_n) ) - f( \Pi_N u_N^n)\right]\|_{H^1} \notag \\
	&\quad+ \tau^2 \| I_N f^\prime (\Pi_N u(t_n)) \cdot \Pi_N v(t_n)
			- I_N f^\prime (\Pi_N u_N^n) \cdot \Pi_N v_N^n\|_{L^2}
		+ \| \mathcal{L}^n\|_ 1 \notag\\
&=: \tilde{J}^n_1+\tilde{J}^n_2+\tilde{J}^n_3+\| \mathcal{L}^n\|_ 1.
\end{align}
Typically, by the Sobolev embedding $H^1\hookrightarrow L^{\infty}$ and the boundedness of the interpolation operator $I_N:H^{1}\rightarrow H^{1}$ in one dimensional, we can estimate $\tilde{J}^n_3$ by
\begin{align}\label{Rn3_H1}
\tilde{J}_3^{n}&\lesssim \tau^2 \Big(\| I_N [f^\prime (\Pi_N u(t_n))-  f^\prime (\Pi_N u_N^n)]\|_{L^{\infty}}\| \Pi_N v_N^n\|_{ L^2}\notag\\
&\qquad\qquad+\| I_N f^\prime (\Pi_N u(t_n)) \|_{L^{\infty}}\|\Pi_N (v(t_n)-v_N^n)\|_{ L^2}\Big)\notag\\
&\lesssim \tau^2 \Big(\| f^\prime (\Pi_N u(t_n))-  f^\prime (\Pi_N u_N^n)\|_{H^1}\|\Pi_N v(t_n)+\Pi_N (v_N^n-v(t_n))\|_{ L^2}\notag\\
&\qquad\qquad +\|  f^\prime (\Pi_N u(t_n)) \|_{H^1}\|\Pi_N (v(t_n)-v_N^n)\|_{ L^2}\Big)\notag\\
&\lesssim \tau^2 \|\Pi_N E^n_N\|_{1}(N^{\frac{1}{2}+}+\|\Pi_N E^n_N\|_{1}).
\end{align}
Then, regarding that the interpolation operator $I_N$ may be unbounded in $L^2$, we can estimate $\tilde{J}^n_1+\tilde{J}^n_2$ by the following decomposition:
\begin{align}\label{Rn1_Rn2_H1}
\tilde{J}^n_1+\tilde{J}^n_2&\leq \tau \|f( \Pi_N u(t_n) ) - f( \Pi_N u_N^n)\|_{L^2}+\tau\|(I-I_N)\left[f( \Pi_N U(t_n))- f( \Pi_N U_N^n)\right]\|_{L^2}\notag\\
&\quad+\tau^2 \| f( \Pi_N u(t_n) ) - f( \Pi_N u_N^n)\|_{H^1}\notag\\
&\lesssim \tau \|f( \Pi_N u(t_n) ) - f( \Pi_N u_N^n)\|_{L^2}+(\tau N^{-1}+\tau^2)\| f( \Pi_N u(t_n) ) - f( \Pi_N u_N^n)\|_{H^1}\notag\\
&\lesssim \tau \|\Pi_N E^n_N\|_{1}+(\tau N^{-1}+\tau^2) \|\Pi_N E^n_N\|_{1}(N^{\frac{1}{2}+}+\|\Pi_N E^n_N\|_{1})
\end{align}

Substituting \eqref{eq:improved_L}, \eqref{Rn3_H1} and \eqref{Rn1_Rn2_H1} into \eqref{Rn_H1}, we get
\begin{align*}
\|\mathcal{R}^{n}\|_1\lesssim \tau \|\Pi_N E^n_N\|_{1}+\tau^2 \|\Pi_N E^n_N\|_{1}(\tau^{-\frac{1}{2}-}+\|\Pi_N E^n_N\|_{1})+\tau^{\frac{3}{2}-},
\end{align*} 
under the step condition $N=O(\tau^{-1})$. This estimate together with \eqref{projection_diff} and the initial condition $\Pi_N U(0)-\Pi_N U^{0}_N=0$, yield
\begin{align}\label{Gronwall_H1}
\|\Pi_N E^{n+1}_N\|_1\leq C\sum_{j=0}^{n}\tau (\|\Pi_N E^j_N\|_{1}+\tau^{\frac{1}{2}-}\|\Pi_N E^j_N\|_{1}^2+\tau^{\frac{1}{2}-}),
\end{align}
which implies \eqref{eq:H1_error_bound} by using Gronwall's inequality.
\end{proof}

We now ready to prove the improved error estimate on the high frequency recovered low-regularity integrator \eqref{reformulated_scheme}.

\begin{proof}[Proof of Theorem~\ref{u11}]
By applying the projection operator $\Pi_N$ and $\Pi_{(N,N^2]}$ on \eqref{sumid}, we have
\begin{align}
    \Pi_N E^{n+1}_N&=\Pi_N\left(U(0)-U^{0}_N\right)+\sum_{j=0}^n e^{(n-j)\tau L} \Pi_N \mathcal{R}^j,\label{projection_diff_low}\\
    \Pi_{(N,N^2]} E^{n+1}_N&=\Pi_{(N,N^2]}\left(U(0)-U^{0}_N\right)+\sum_{j=0}^n e^{(n-j)\tau L} \Pi_{(N,N^2]} \mathcal{L}^j,\label{projection_diff_high}
\end{align}
where we have used the equality $\Pi_{(N,N^2]}\mathcal{R}^{j}=\Pi_{(N,N^2]}\mathcal{L}^{j}$ in \eqref{projection_diff_high}. Using the same estimate as in the proof of Theorem~\ref{thm:convergence}, we obtain
\begin{align}\label{0_norm_R^j}
\|\Pi_N \mathcal{R}^j\|_0\leq \|\mathcal{R}^j\|_0\lesssim \sum_{l=1}^{7}K^{j}_{l}+\|\mathcal{L}^{j}\|_0,
\end{align}
with $K^{j}_{l}$ defined in \eqref{estimate_Rn0}, and there holds 
\begin{align}
K^j_1+\sum_{l=3}^{7}K^j_l\lesssim \tau\|\Pi_N E^{j}_N\|_0+\tau^{2-}, 
\end{align}
under the step condition $N=O(\tau^{-1})$. For $K^{j}_2$, we apply the improved estimate in Lemma~\ref{lem:improved_R8} to get
\begin{align}\label{est:K^j_2}
K^{j}_2&\lesssim \tau\|(I-I_N)f(\Pi_N u(t_j))\|_{H^{-1}}+\tau\left\|(I-I_N)f\left(\Pi_N \big(u(t_j)+\Pi_N (u(t_j)-u^j_N)\big)\right)\right\|_{H^{-1}}\notag\\
&\lesssim \tau N^{-1+}\left(1+\|u(t_j)\|_{BV}+\|u(t_j)+\Pi_N (u(t_j)-u^j_N)\|_{BV}\right)\notag\\
&\lesssim \tau N^{-1+}\left(1+\|u(t_j)\|_{BV}+\|\Pi_N U(t_j)-\Pi_N U^{j}_N\|_{1}\right)\lesssim \tau N^{-1+},
\end{align}
where we have used $\|\Pi_N (u(t_j)-u^j_N)\|_{BV}\leq \|\Pi_N (u(t_j)-u^j_N)\|_{H^{1}}$ and Lemma~\ref{lem:H1_error_bound} in the last inequality.
Under the step size condition $N=O(\tau^{-1})$, one deduce from \eqref{0_norm_R^j}--\eqref{est:K^j_2} that
\begin{align*}
\|\Pi_N \mathcal{R}^j\|_0&\lesssim \tau\|\Pi_N E_N^{j}\|_ 0 +\| \Pi_N\mathcal{L}^j\|_0+ \tau^{2-}.
\end{align*}
This together with \eqref{projection_diff_low} and \eqref{projection_diff_high} implies
\begin{align}
\| \Pi_N E_N^{n+1}\|_ 0 & \leq
C_0\Big[\left\| \Pi_{N}\left(U^{0}-U^0_N\right)\right\|_ 0+\sum_{j=0}^n\left(\tau\| \Pi_N E_N^{j}\|_ 0 + \| \Pi_N\mathcal{L}^j\|_0+ \tau^{2-}\right)\Big],\label{low-frequency-estimate}\\
\| \Pi_{(N,N^2]} E_N^{n+1}\|_ 0 & \leq
C_0\Big[\left\| \Pi_{(N,N^2]}\left(U^{0}-U^0_N\right)\right\|_ 0+\sum_{j=0}^n\|\Pi_{(N,N^{2}]}  \mathcal{L}^j\|_0\Big].\label{high-frequency-estimate}
\end{align}
Since our choice of initial value $U_N^0=\Pi_{N^2}U(0)$ implies that 
$$\| \Pi_N (U(0)-U_N^0)\|_ 0=\| \Pi_{(N,N^{2}]} (U(0)-U_N^0)\|_ 0=0 , $$
by using the improved estimate on $\|\mathcal{L}^j\|_0$ in \eqref{eq:improved_L} and Gronwall's inequality, we obtain the following result from \eqref{low-frequency-estimate}--\eqref{high-frequency-estimate}: 
\begin{align}
\| \Pi_{N^{2}} U(t_{n+1})-U_N^{n+1} \|_ 0 
&\leq C\tau^{1-} .
\end{align}
This, together with an estimate of the projection error $\| U(t_{n+1}) - \Pi_{N^{2}} U(t_{n+1}) \|_ 0$ for $U^0\in H^{\frac12-}(\Omega)\times H^{-\frac12-}(\Omega)$, leads to the error estimate in Theorem \ref{u11}.
\end{proof}

From the proof of Theorem \ref{u11} we see that, without introducing the high-frequency recovery, the numerical solution satisfies  
$
\| \Pi_{N} U(t_{n+1})-U_N^{n+1} \|_ 0 \le C\tau^{1-}
$
but does not satisfy $\| U(t_{n+1})-U_N^{n+1} \|_ 0 \le C\tau^{1-}$ (therefore only has half-order convergence). 
This reduction of convergence rate (without the high-frequency recovery) can be observed in the numerical tests. 

\section{Numerical examples}\label{Section:Numerical}

In this section, we present extensive numerical examples to support the theoretical analysis and to illustrate the effectiveness of the low-regularity integrator in this paper in capturing the interface of discontinuity in the solutions, as well as the accuracy (without spurious oscillations) in approximating rough and discontinuous solutions of the semilinear wave equation.

\subsection{The Sine--Gordon equation in one dimension}\label{subsection:1d_Sine_Gordon}
We consider the semilinear wave equation in \eqref{KG_eq} with a nonlinear function $g(u)=40\sin(u)$ for the following piecewise smooth discontinuous initial state: 
\begin{align}\label{1d-initial-value}
	\big(u^{0}(x),v^{0}(x)\big)=\left\{
	\begin{array}{ll}
		{\displaystyle \left(5,-5\right)},\quad &\text{for } {\displaystyle x\in \big [0.3,0.425\big]},\\[1mm]
		{\displaystyle \left(2.5,-2.5\right)},\quad &\text{for } {\displaystyle x\in \big [0.575,0.7\big]},\\[1mm]
		{\displaystyle \left(0,0\right)},\quad &\text{else where} ,
	\end{array}
	\right. 
\end{align}
which satisfies the conditions of Theorem \ref{u11}. As a result, the low-regularity integrator \texttt{HR-LRI} in \eqref{reformulated_scheme} with $\alpha=2$ should have almost first-order convergence by choosing $N=O(\tau^{-1})$. We solve the problem with $N=2^{10}$ and $4\tau=N^{-1}$, and present the evolution of the numerical solution for $t\in[0,T]$ in Figure \ref{fig:4-1} (a), which clearly shows the propagation of discontinuities of the solution. 

\begin{figure}[htbp!]
	\centering
	\subfigure[Propagation of $u(t,x)$]{\includegraphics[width=6.3cm,height=5.3cm]{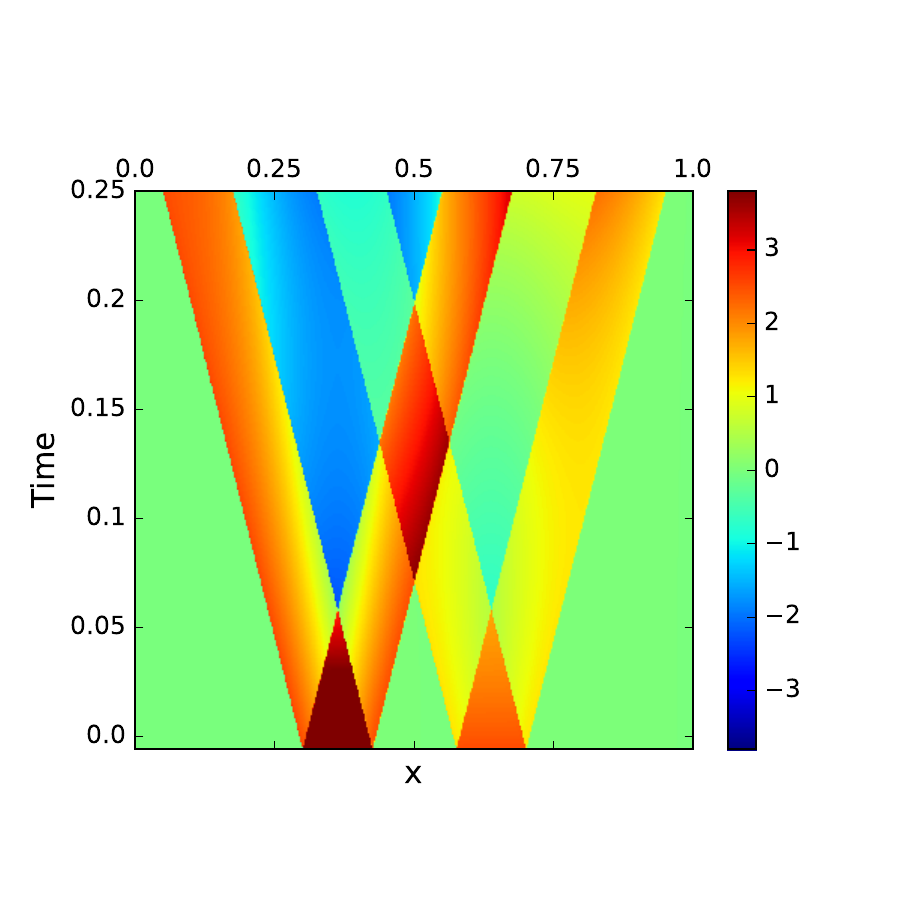}}
	\qquad
	\subfigure[Comparison of several different methods]{\includegraphics[width=6.3cm,height=5.3cm]{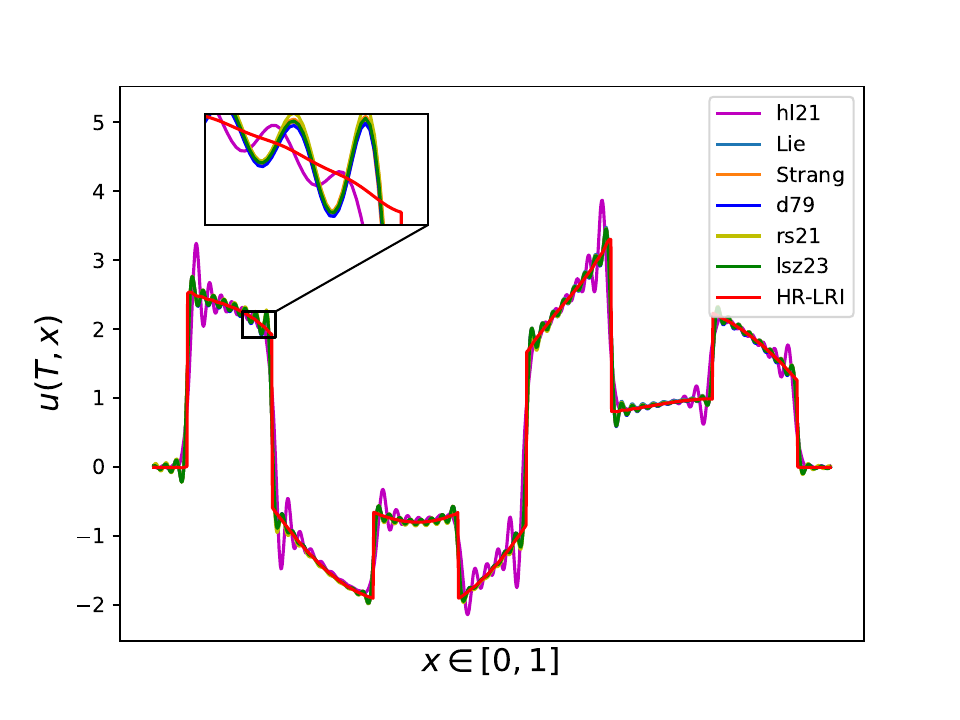}}
	\caption{Numerical solution of the 1D problem with discontinuous initial value in \eqref{1d-initial-value}. }
	\label{fig:4-1}
\end{figure}

\begin{figure}[htbp!]
	\centering
	\subfigure[$L^{2}(\Omega)\times H^{-1}(\Omega)$ error versus $\tau$]{\includegraphics[width=6.3cm,height=5.3cm]{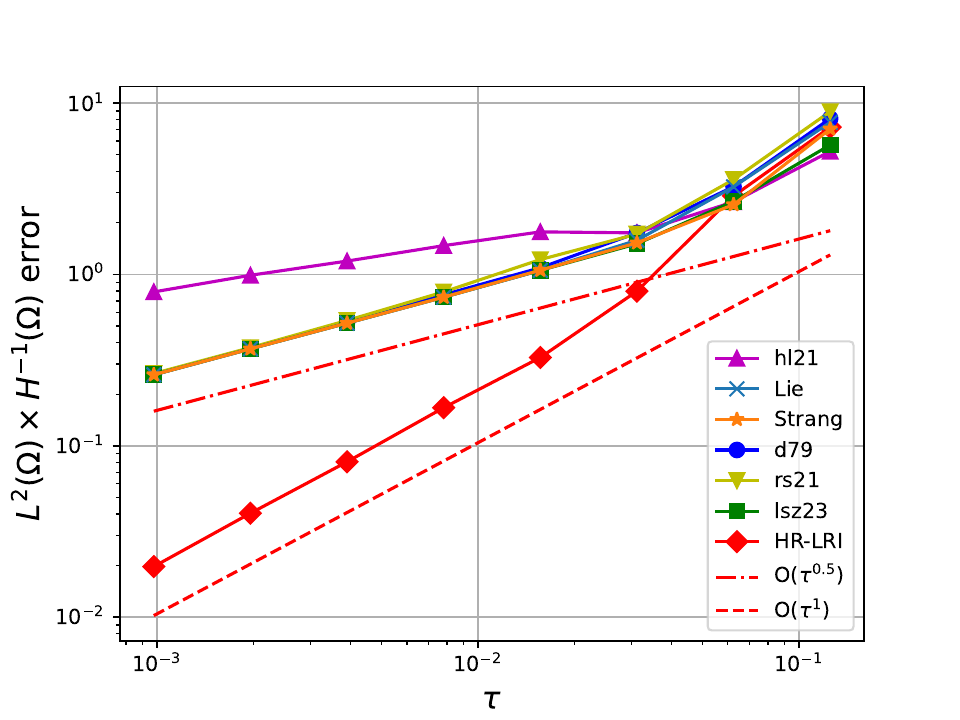}}
	\qquad
	\subfigure[$L^{2}(\Omega)\times H^{-1}(\Omega)$ error versus CPU time]{\includegraphics[width=6.3cm,height=5.3cm]{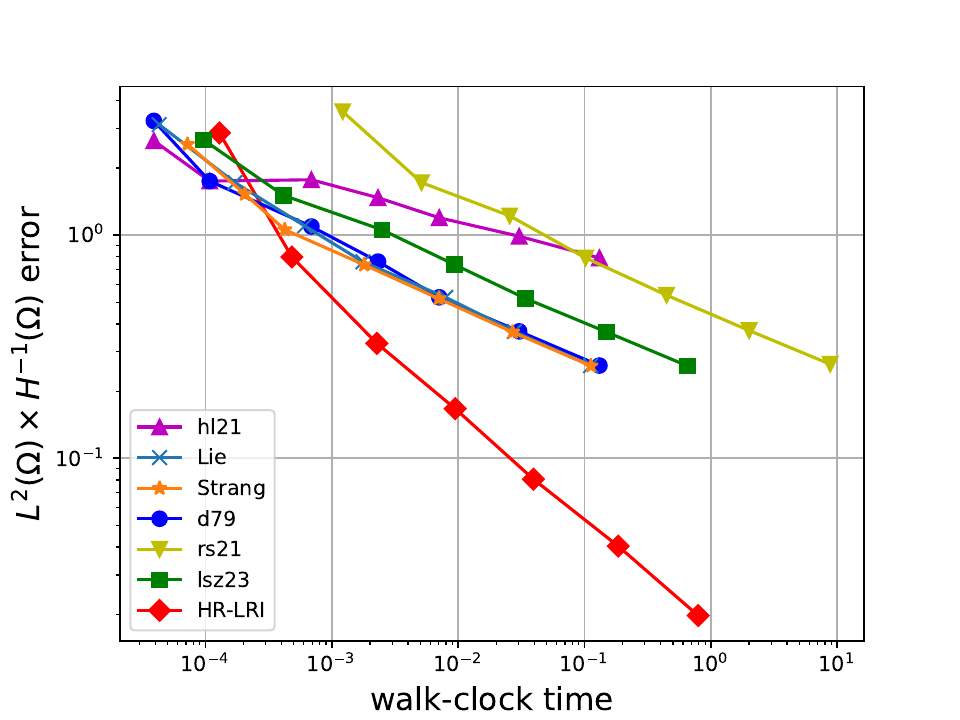}}
	\caption{Comparison of numerical solutions given by several different methods.}
	\label{fig:4-2}
\end{figure}

In Figure~\ref{fig:4-1} (b), we plot the numerical solutions at time $T$ computed by several different numerical methods, including the second-order low-regularity correction of Lie splitting method from \cite{LS2023} (which is referred to as \texttt{lsz23}), the second-order low-regularity exponential-type scheme from \cite{RS21} (which is referred to as \texttt{rs21}), the second-order IMEX method from \cite{HL2021} (referred to as \texttt{hl21}), the second-order trigonometric integrators constructed by Deuflhard \cite{D79} (referred to as \texttt{d79}), and classical splitting methods such as the Lie splitting scheme and Strang splitting scheme. The time step sizes and number of Fourier modes in all these methods are chosen to be $\tau=N^{-1}/4$ and $N=2^{7}$, respectively. 
From the numerical results in Figure~\ref{fig:4-1} (b) we can see that the discontinuities in the exact solution may lead to significant oscillations in the solutions of the pre-existing methods, while the proposed method in \texttt{HR-LRI} can substantially reduce the numerical oscillations with equivalent computational cost. 

In Theorem \ref{u11} and Remark \ref{resin} we have shown that, since the initial value of the solution is in $BV(\Omega)\cap L^{\infty}(\Omega)$, the error of the numerical solution is $O(\tau^{1-})$. In Figure~\ref{fig:4-2} we compare the $L^{2}(\Omega)\times H^{-1}(\Omega)$ errors of the numerical solutions at $T=0.25$ computed by the several different methods with step size $\tau=N^{-1}/4$ and the reference solutions are given by the proposed method with a sufficiently large $N=2^{14}$. The numerical results Figure~\ref{fig:4-2} are consistent with the theoretical results proved in Theorem \ref{u11} and demonstrates that the proposed method has a higher convergence rate (with respect to both step size and CPU time) than pre-existing methods in approximating discontinuous solutions of the semilinear wave equations. 

To further demonstrate the effectiveness of the proposed high-frequency recovery process, we compare the numerical solutions before and after high-frequency recovery at time $T=0.25$ in Figure~\ref{fig:4-10} (a). Here, \texttt{HR-LRI} represents the high-frequency recovery algorithm proposed in this paper, with $N=2^7$, $\tau=N^{-1}/4$, and $\alpha=2$. The ``\texttt{Without recovery}" corresponds to the algorithm \eqref{reformulated_scheme} without undergoing the high-frequency recovery process, with $N=2^7$ and $\tau=N^{-1}/4$. The reference solution in Figure~\ref{fig:4-10} (a) is obtained by using the proposed algorithm with sufficiently large $N=2^{14}$. And, in Figure~\ref{fig:4-10} (b), we compare the errors of numerical solutions before and after high-frequency recovery under  the condition $\tau=N^{-1}/4$. As rigorously proved by Theorem~\ref{u11}, the numerical experiments show that the high-frequency recovery process significantly reduces the oscillations of the solution, resulting in a higher order of convergence accuracy, whereas in the absence of the high-frequency recovery process, the numerical algorithm's order of convergence decreases significantly.
\begin{figure}[htbp!]
	\centering
	\subfigure[Comparison before and after high-frequency recovery process ]{\includegraphics[width=6.2cm,height=5.3cm]{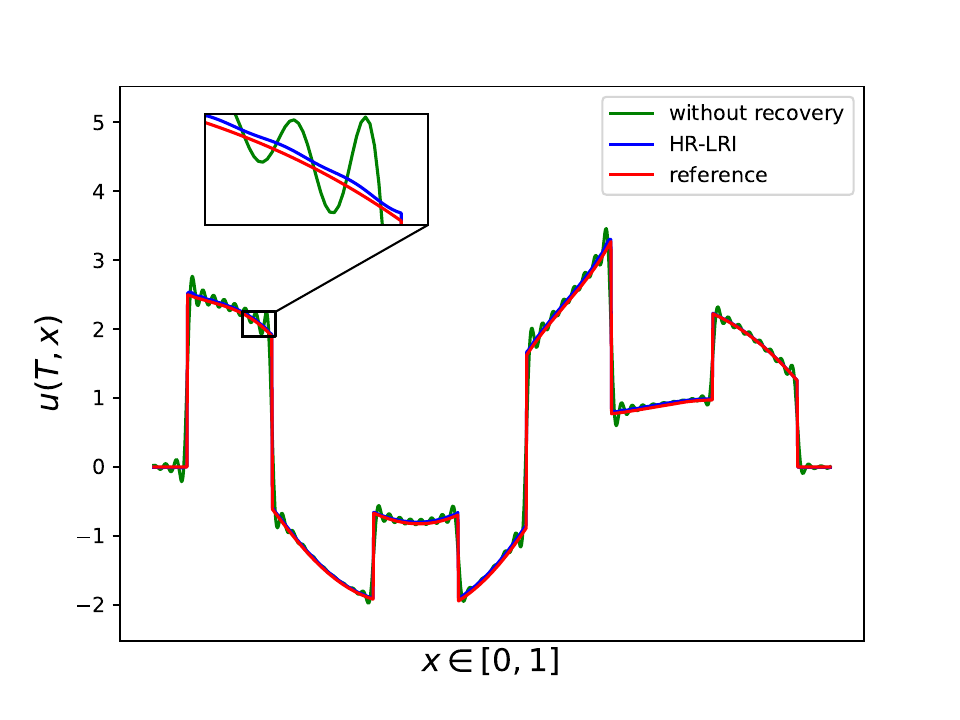}}
	\qquad
	\subfigure[$L^{2}(\Omega)\times H^{-1}(\Omega)$ error versus $\tau$]{\includegraphics[width=6.0cm,height=5.3cm]{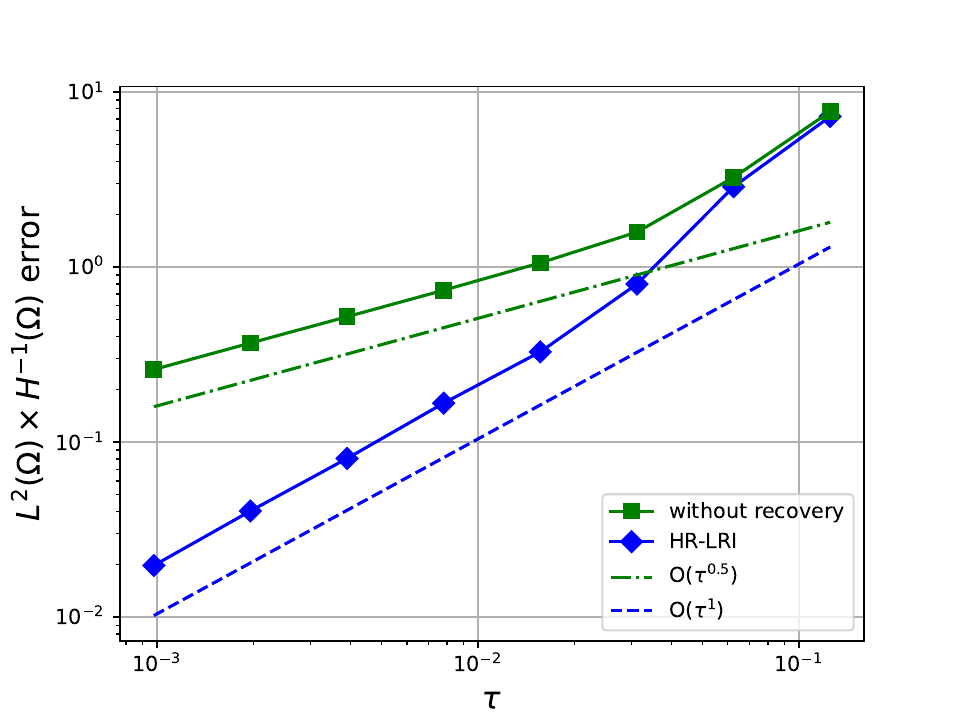}}
	\caption{Numerical results of the 1D problem with the initial value in \eqref{1d-initial-value}.}
	\label{fig:4-10}
\end{figure}

\subsection{The Sine--Gordon equation in two dimensions}
In this section, we consider the semilinear wave equation with $g(u)=4\sin(u)$ for the initial states:
\begin{itemize}
\item[(i)] Propagation of one discontinuous wave: 
$$\big(u^{0}(x),v^{0}(x)\big)=\left\{
\begin{array}{ll}
{\displaystyle \left(0.5,0\right)},\quad &\text{for } {\displaystyle x\in \big [0.375,0.625\big]^{2}},\\[4mm]
{\displaystyle \left(0,0\right)},\quad &\text{else where. }
\end{array}
\right. 
$$
\item[(ii)] Propagation of two discontinuous waves: 
$$\big(u^{0}(x),v^{0}(x)\big)=\left\{
\begin{array}{ll}
{\displaystyle \left(0.5,0\right)},\quad &\text{for } {\displaystyle x\in \big [0.3,0.425\big]^{2}},\\[4mm]
{\displaystyle \left(0.25,0\right)},\quad &\text{for } {\displaystyle x\in \big [0.575,0.7\big]^{2}},\\[4mm]
{\displaystyle \left(0,0\right)},\quad &\text{else where. }
\end{array}
\right. 
$$

\item[(iii)] 
A rough initial value in $H^{\frac12}\times H^{-\frac12}$: 
\begin{equation}\label{eq:rough_initial_2d}
(u^{0}(x),v^{0}(x))=\Big(\frac{1}{C_u}\sum_{k,l\in\mathbb{Z}}a_u(k)b_u(l)e^{i(kx_1+lx_2)},\;\frac{1}{C_v}\sum_{k,l\in\mathbb{Z}}a_v(k)b_v(l)e^{i(kx_1+lx_2)} \Big),
\end{equation}
where
\begin{align*}
\left\{
\begin{array}{ll}
a_u(k)=\text{rand}(0,1)|k|^{-1.01},\quad b_u(l)=\text{rand}(0,1)|l|^{-1.01},\\[2mm]
a_v(k)=\text{rand}(0,1)|k|^{-0.01},\quad b_v(l)=\text{rand}(0,1)|l|^{-0.01},
\end{array}
\right.
\end{align*}
and $C_u$ and $C_v$ are constants such that $\|u^{0}\|_{H^{
\frac{1}{2}}}=\|v^{0}\|_{H^{-\frac{1}{2}}}=1$.
\end{itemize}
We solve the semilinear wave equation by the Strang splitting method and the proposed low-regularity integrator \texttt{HR-LRI} in \eqref{reformulated_scheme} with $\alpha=\frac{3}{2}$ for the initial values in (i) and (ii), and plot the numerical solutions in Figure~\ref{fig:4-5-1}--\ref{fig:4-5} by choosing $\tau=N^{-1}/4=2^{-8}$. The results show that the proposed method can effectively eliminate the high oscillation of numerical solutions in approximating discontinuous solutions of the semilinear wave equation. 

\begin{figure}[!htbp]
\subfigure[Strang splitting]{\includegraphics[width=6.3cm,height=4.3cm]{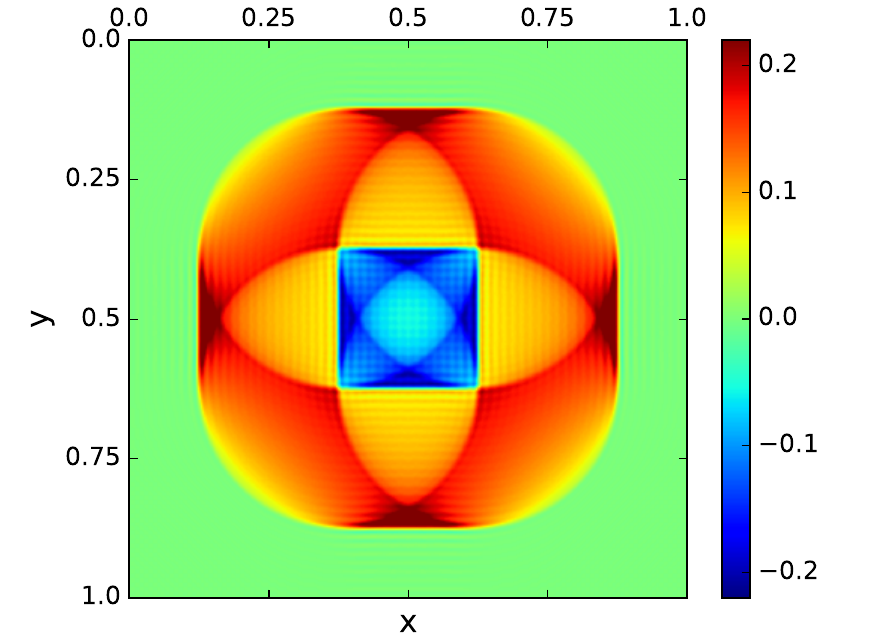}}
\qquad
\subfigure[The proposed method (\texttt{HR-LRI})]{\includegraphics[width=6.3cm,height=4.3cm]{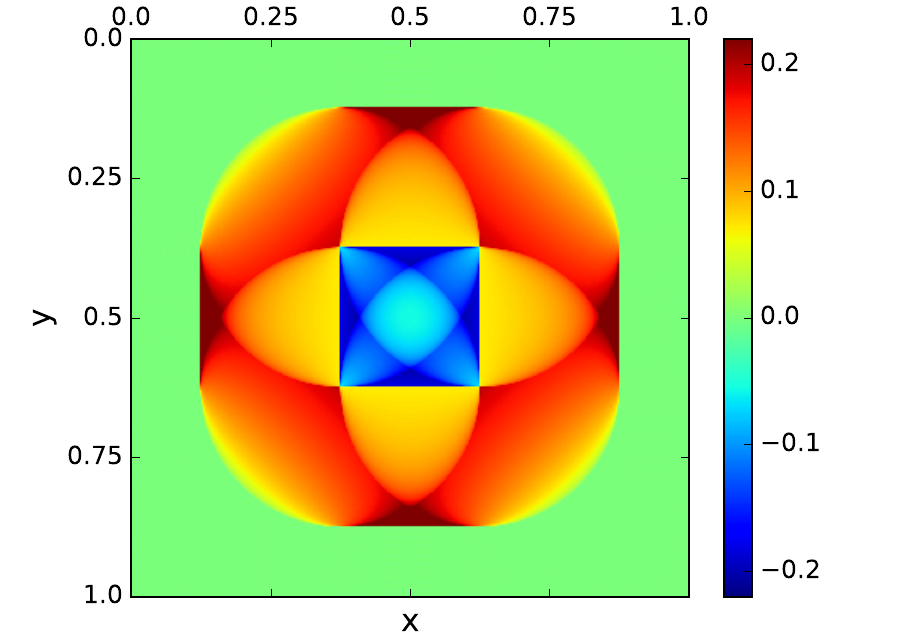}}
\caption{Comparison of the numerical solutions at $t=0.25$ computed by two different methods}
\label{fig:4-5-1}
\end{figure}

\begin{figure}[!htbp]
\subfigure[Strang splitting]{\includegraphics[width=6.3cm,height=4.2cm]{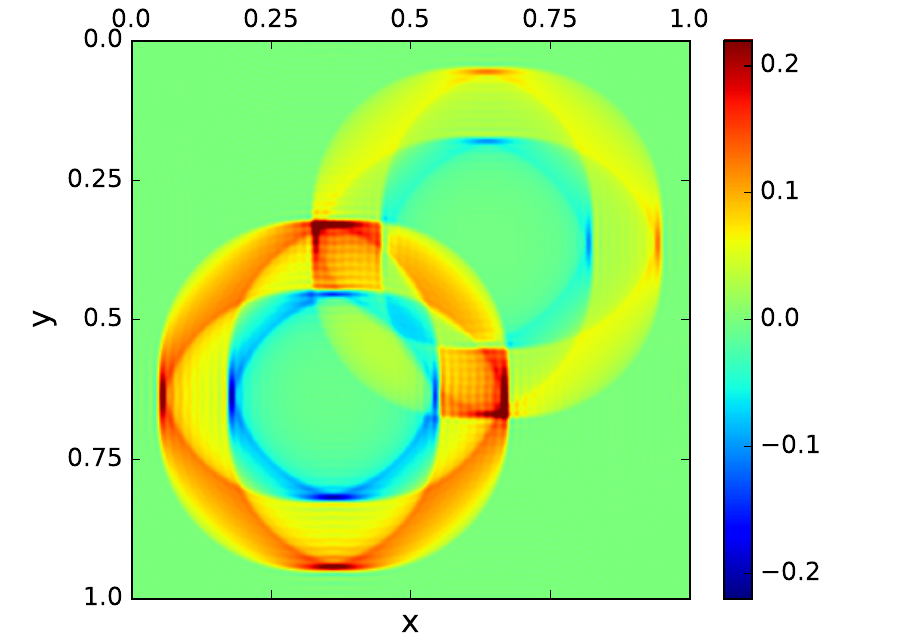}}
\qquad
\subfigure[The proposed method (\texttt{HR-LRI})]{\includegraphics[width=6.3cm,height=4.2cm]{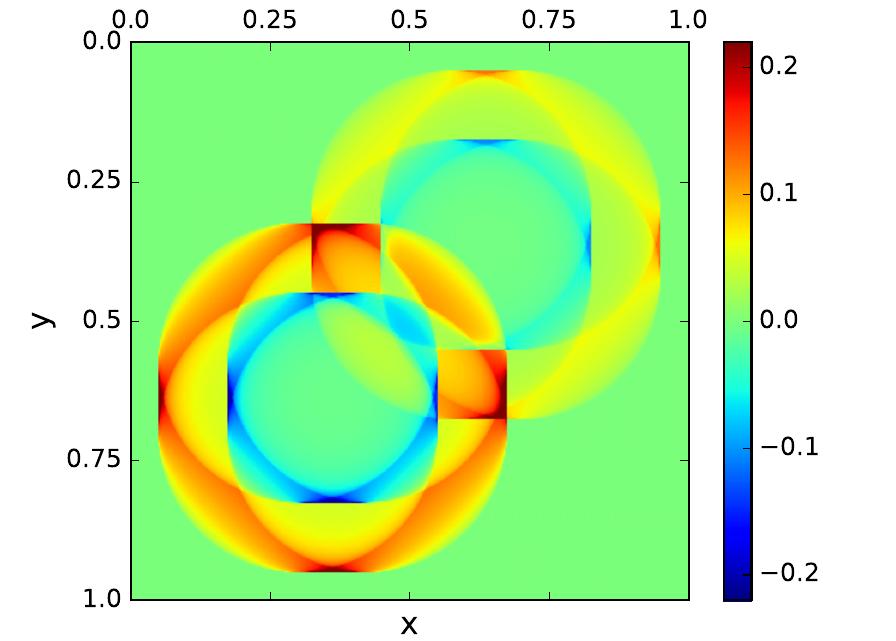}}
\caption{Comparison of the numerical solutions at $t=0.25$ computed by two different methods}
\label{fig:4-5}
\end{figure}



\begin{figure}[!htbp]
\centering
\subfigure[Error vs $\tau$ for initial state (i)]{\includegraphics[width=6.3cm,height=5.1cm]{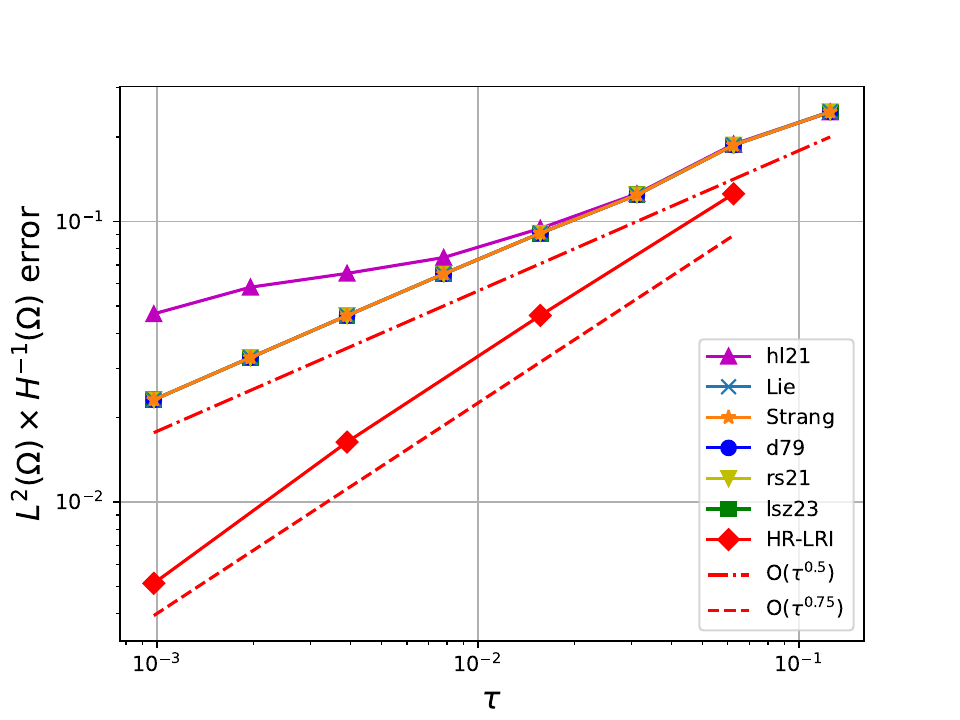}}
\qquad
\subfigure[Error vs CPU time for initial state (i)]{\includegraphics[width=6.3cm,height=5.1cm]{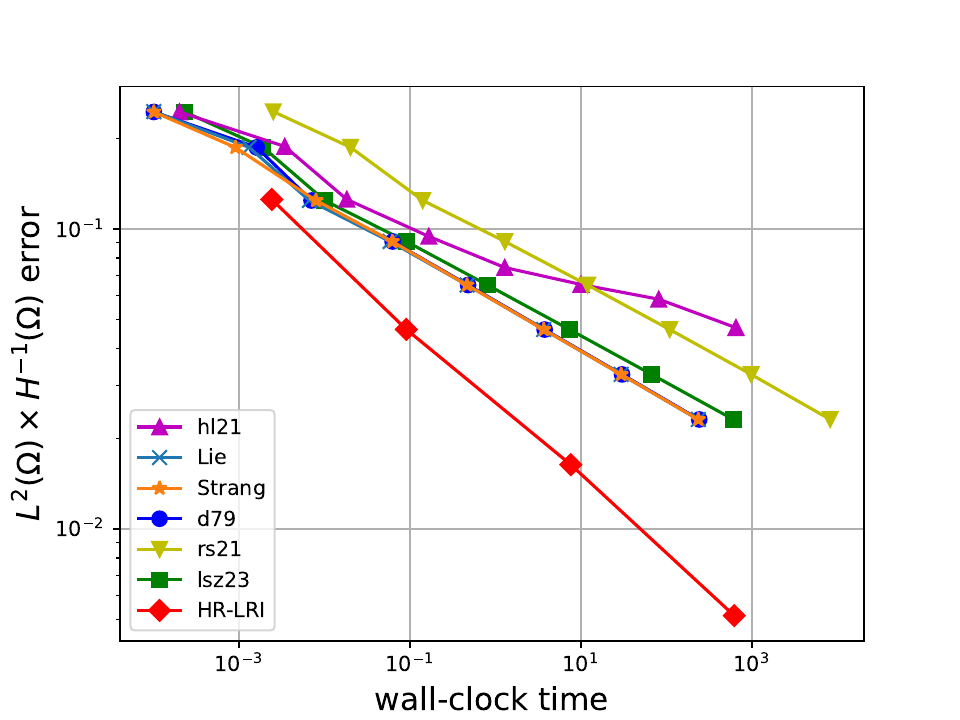}}
\vfill
\centering
\subfigure[Error vs $\tau$ for initial state (iii)]{\includegraphics[width=6.3cm,height=5.1cm]{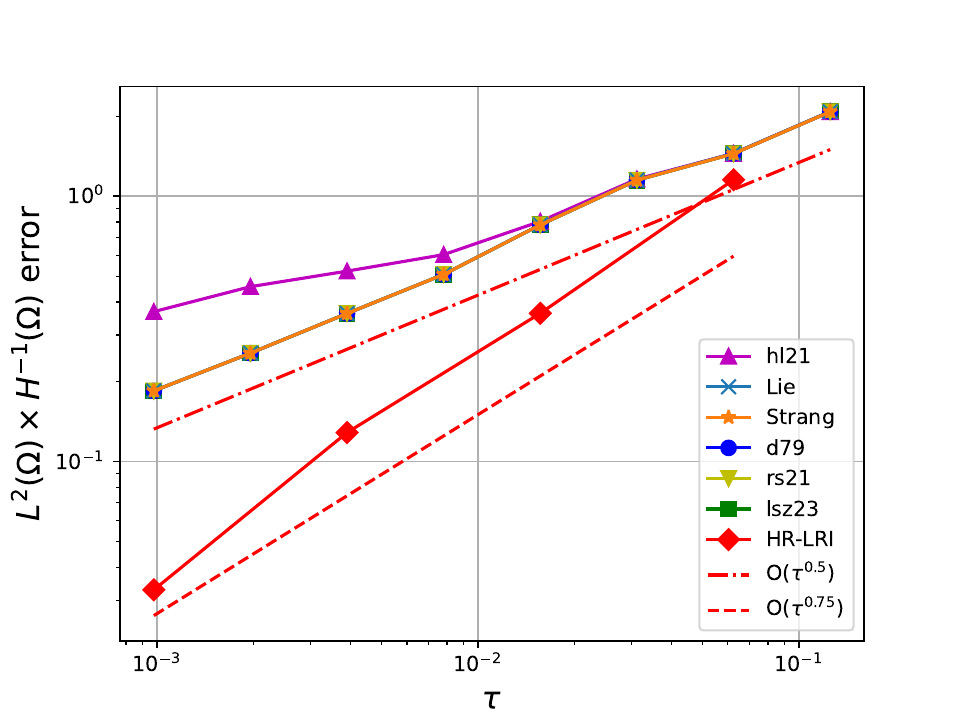}}
\qquad
\subfigure[Error vs CPU time for initial state (iii)]{\includegraphics[width=6.3cm,height=5.1cm]{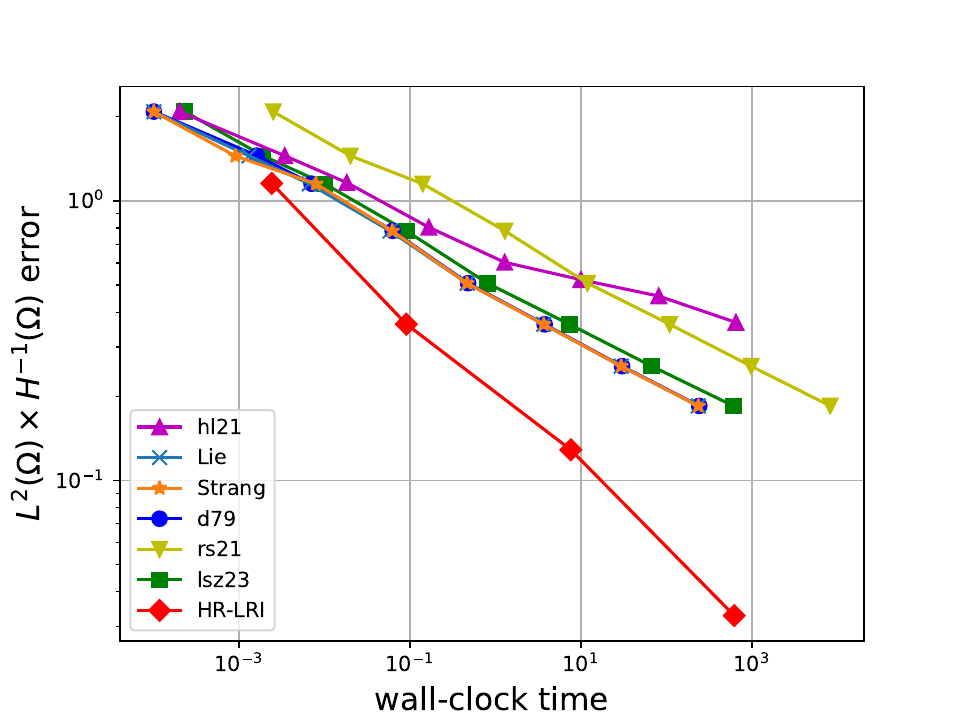}}
\caption{Comparison of numerical solutions given by several different methods in two dimensional cases.}
\label{fig:4-6}
\end{figure}

\pagebreak
The errors of the numerical solutions computed by several different numerical methods are presented in Figure \ref{fig:4-6} with $4\tau=N^{-1}$, where the reference solution is given by the proposed method with sufficiently large $N$ and sufficiently small $\tau$. The numerical results in Figure \ref{fig:4-6} are again consistent with the theoretical results proved in Theorem~\ref{thm:convergence}, i.e., the proposed method has convergence order $\frac34$ while the other methods have convergence order $\frac12$ or smaller. The convergence rate of the proposed method with respect to CPU time is also faster than pre-existing methods in approximating the discontinuous solution of the two-dimensional semilinear wave equation.


\subsection{The Klein--Gordon equation in one dimension}
In the last example, we consider the one-dimensional Klein--Gordon equation with a locally Lipschitz continuous (not globally Lipschitz continuous) nonlinear function $g(u)= u^{3}$, with the following piecewise smooth discontinuous initial state: 
\begin{align}\label{KG}
\big(u^{0}(x),v^{0}(x)\big)
=\left\{
\begin{array}{ll}
{\displaystyle \left(4,0\right)},\quad &\text{for } {\displaystyle x\in \big [0.3,0.425\big]},\\[4mm]
{\displaystyle \left(2,0\right)},\quad &\text{for } {\displaystyle x\in \big [0.575,0.7\big]},\\[4mm]
{\displaystyle \left(0,0\right)},\quad &\text{else where}, 
\end{array}
\right. 
\end{align}
which leads to a bounded piecewise smooth discontinuous solution. 
Since the Lipschitz continuity condition \eqref{cond:f} can be satisfied when $u(t,x)$ is uniformly bounded for $(t,x)\in [0,T]\times \Omega$, the theoretical results in Theorem~\ref{thm:convergence} and Theorem~\ref{u11} are also applicable to this problem. 


\begin{figure}[htbp!]
\centering
\subfigure[Evolution of numerical solution]{\includegraphics[width=6.5cm,height=5.5cm]{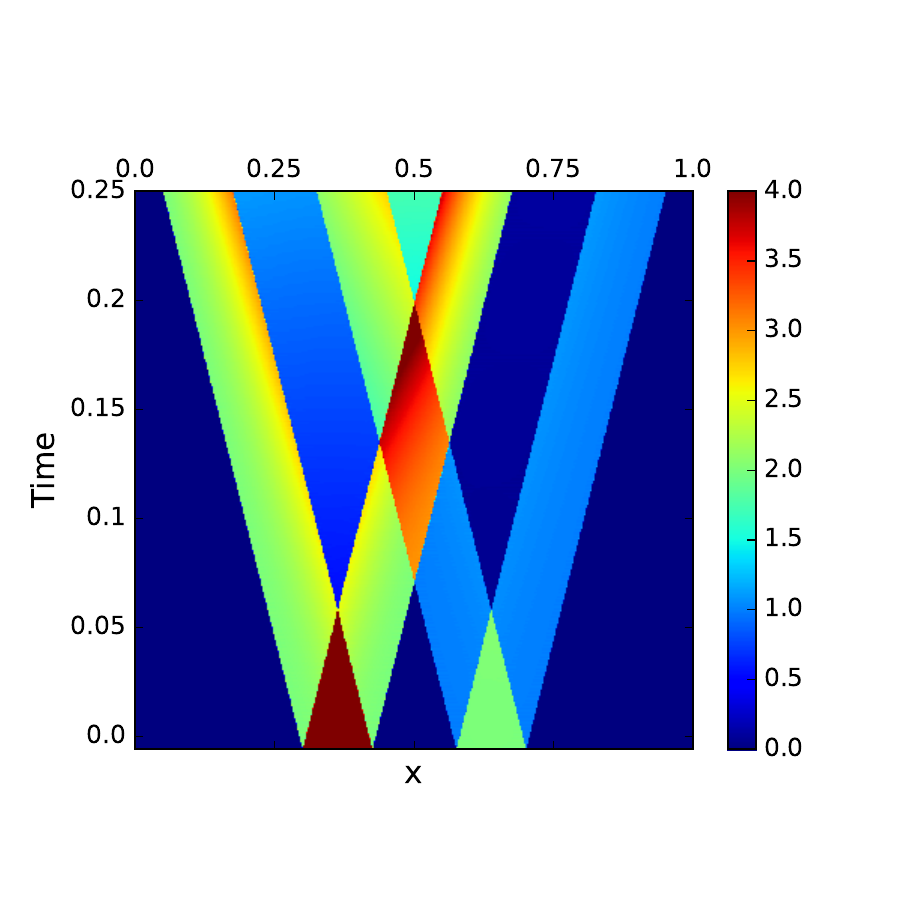} }
\qquad
\subfigure[Comparison of several different methods]{\includegraphics[width=6.0cm,height=5.3cm]{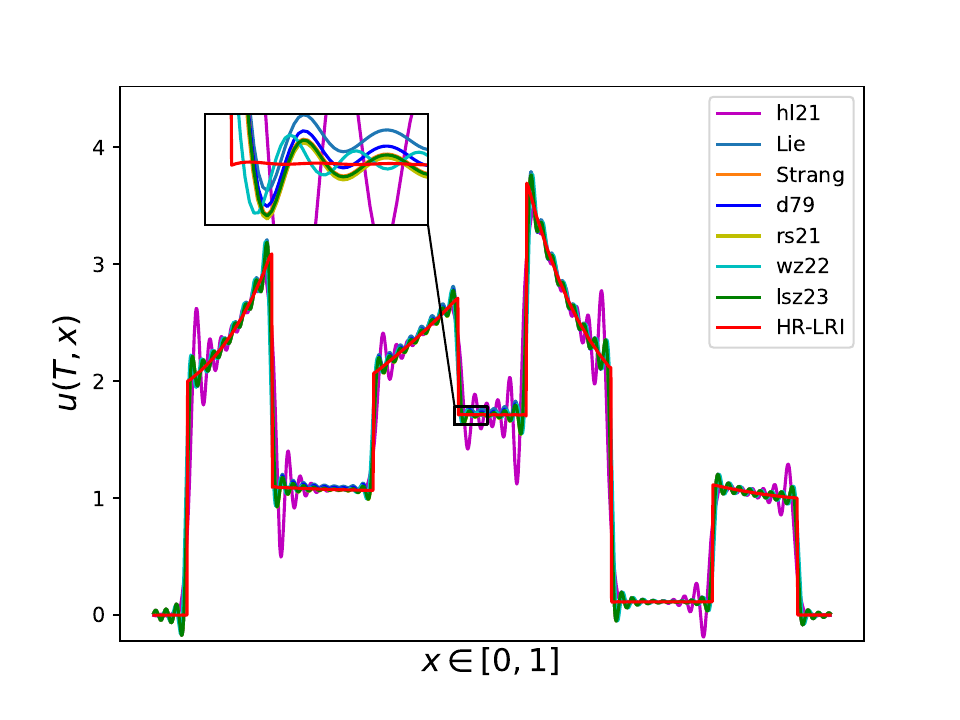} }
\caption{Numerical solution with the initial value in \eqref{KG}.}
\label{fig:4-8}
\end{figure}

\begin{figure}[htbp!]
\centering
\subfigure[$L^{2}(\Omega)\times H^{-1}(\Omega)$ error versus $\tau$]{\includegraphics[width=6.2cm,height=5.3cm]{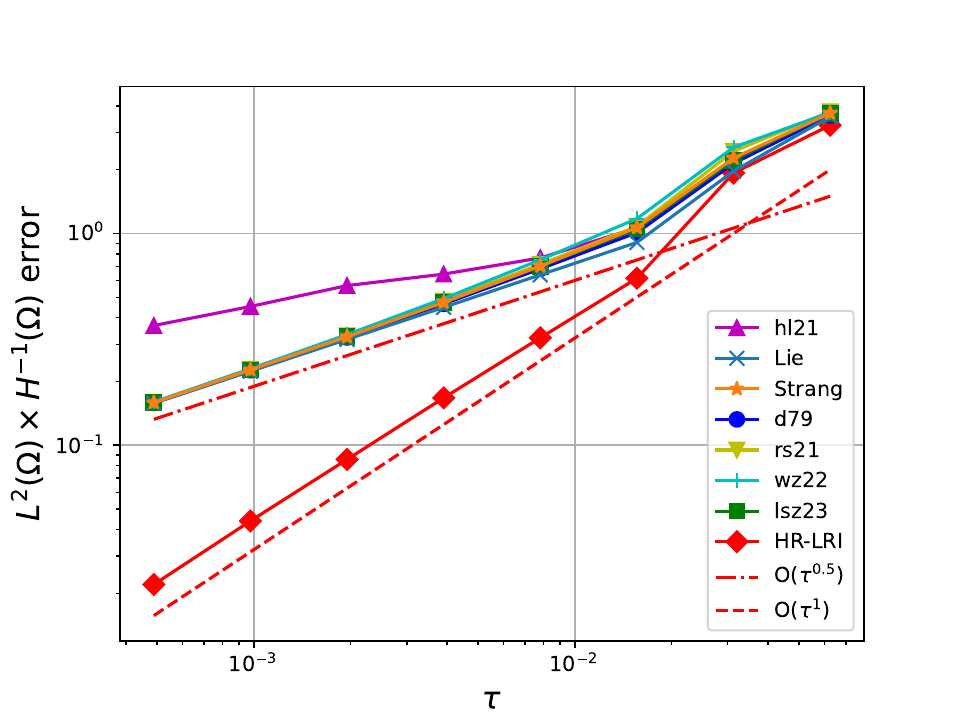}}
\qquad
\subfigure[$L^{2}(\Omega)\times H^{-1}(\Omega)$ error versus walk-clock time]{\includegraphics[width=6.0cm,height=5.3cm]{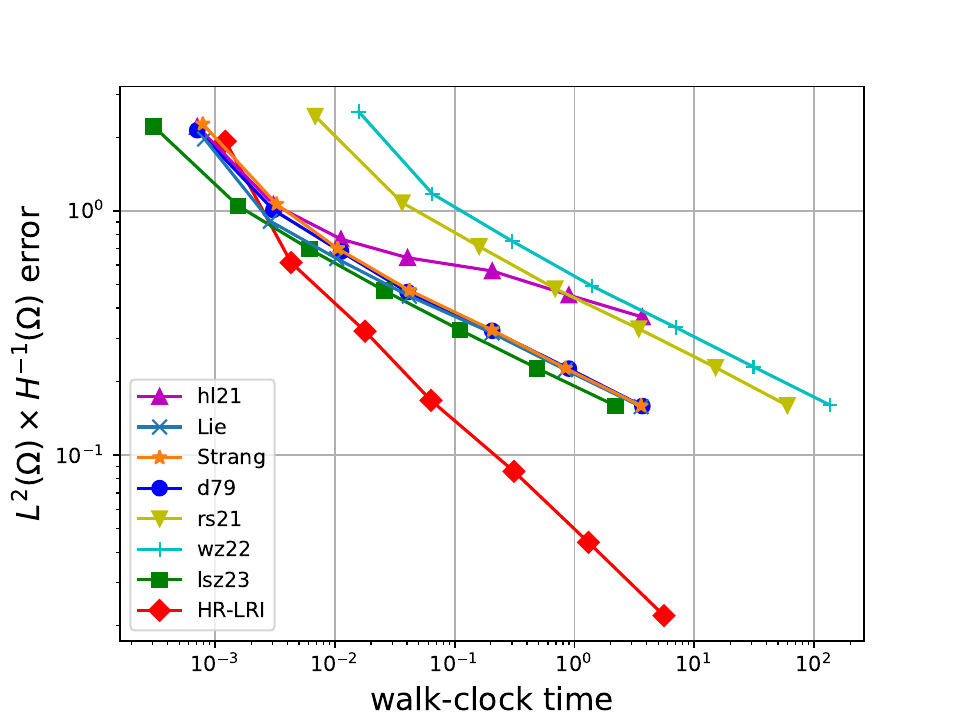}}
\caption{Errors of the numerical solutions with the initial value in \eqref{KG}.}
\label{fig:4-9}
\end{figure}

We solve the Klein--Gordon equation with $\tau=N^{-1}/4=2^{-12}$ and present the evolution of the numerical solution in Figure \ref{fig:4-8} (a), which shows the propagation of discontinuities of the solution. 
In Figure~\ref{fig:4-8} (b) we compare the numerical solutions at $T=0.25$ computed by several different methods. Here the symmetric low-regularity integrator for semilinear Klein--Gordon equation in \cite{WZ2022} is also taken into comparison and referred to as \texttt{wz22}. The numerical results in Figure~\ref{fig:4-8} (b) indicate that the proposed method indeed improves the accuracy and reduces spurious oscillations. 

In addition, we present the errors of the numerical solutions computed by the several different methods in Figure~\ref{fig:4-9}, which shows that the proposed method has first-order convergence with respect to the step size $\tau$ in approximating such discontinuous solutions, and the usual methods have half-order convergence in this case. 
This is consistent with the convergence rate proved in Theorem~\ref{u11} and demonstrates the efficiency of the proposed method in approximating rough and discontinuous solutions.


\end{document}